\newcommand{\interior}[1]{\overset{\smash{\raisebox{-0.05ex}{$\scriptstyle\circ$}}}{#1}\rule{0pt}{2.ex}}
\newcommand\numberthis{\addtocounter{equation}{1}\tag{\theequation}}
\newtheorem{theorem}{Theorem}
\newtheorem{lemma}[theorem]{Lemma}
\def \ln {\text{ln}}
\title{High-order compact schemes for parabolic problems with mixed derivatives in multiple space dimensions}
\author{Bertram D{\"u}ring\thanks{Email:~b.during@sussex.ac.uk, Department of Mathematics, University of Sussex, Pevensey II, Brighton, BN1 9QH, United Kingdom}
% \and Christof Heuer\thanks{Email:~c.heuer@sussex.ac.uk, Department of Mathematics, University of Sussex, Pevensey II, Brighton, BN1 9QH, United Kingdom}} 
\and Christof Heuer\thanks{Email:~cheuer@uni-wuppertal.de, Lehrstuhl f{\"u}r Angewandte Mathematik und Numerische Analysis, Fachbereich C, Bergische Universit{\"a}t Wuppertal, Gau{\ss}str. 20, 42119 Wuppertal, Germany}}
\begin{document}
\maketitle

\begin{abstract}
\noindent  We present a
high-order compact finite difference approach
for a class of parabolic partial
differential equations with time and space dependent coefficients as
well as
with mixed second-order derivative terms in $n$ spatial
dimensions. Problems of this type arise frequently in computational
fluid dynamics and computational finance.
We derive general conditions
on the coefficients which allow us to obtain a high-order compact scheme
which is fourth-order accurate in space and second-order accurate in time.
Moreover, we perform a thorough von Neumann stability analysis 
%(for frozen coefficients) 
of the
Cauchy problem in two and three
spatial dimensions for vanishing mixed derivative terms, and also give
partial results for the general case.  The results suggest unconditional stability of the scheme.
As an application example we consider the pricing of European Power
Put Options in the multidimensional Black-Scholes model for two and
three underlying assets. Due to the low regularity of typical initial
conditions we employ the smoothing operators of Kreiss et al.\
to ensure high-order convergence of the approximations of the smoothed problem to the true solution.
\end{abstract}

% \begin{keyword}
% %% keywords here, in the form: keyword \sep keyword
%High-order compact finite difference discretisations \sep
%mixed derivatives \sep stability analysis \sep option pricing
% %% MSC codes here, in the form: \MSC code \sep code
% %% or \MSC[2008] code \sep code (2000 is the default)
%\MSC 65M06 \sep 65M12 \sep 91B28
%\end{keyword}

\section{Introduction}

In the last decades, starting from early efforts of
Gupta et al.\ \cite{GuMaSt84,GuMaSt85}, high-order compact finite difference schemes were
proposed for the numerical
approximation of solutions to elliptic \cite{SpoCar96,MR2285863} and
parabolic \cite{SpoCar01,KarZha02}
partial differential equations.
These schemes are able to exploit the smoothness of solutions to such
problems and allow to achieve high-order 
numerical convergence rates (typically strictly larger than two in the spatial discretisation
parameter) while generally having good stability properties.
Compared to finite element approaches the
high-order compact schemes are parsimonious and memory-efficient
to implement and hence prove to be a viable alternative if the complexity of the
computational domain is not an issue.
It would be possible to achieve higher-order approximations also by increasing the
computational stencil but this leads to increased bandwidth of the
discretisation matrices and complicates formulations of boundary
conditions. Moreover, such approaches sometimes suffer from restrictive
stability conditions and spurious numerical oscillations. These
problems do not arise when using a compact stencil.

Although applied successfully to many important applications,
e.g.\ in computational fluid dynamics \cite{SpoCar95,LiTaFo95,LiTa01,FouRig11} and computational
finance \cite{DuFoJu03,DuFoJu04,TaGoBh08,DuFo12,DuFoHe14},
an even wider breakthrough of the high-order compact methodology has
been hampered by the algebraic complexity that is inherent to this
approach.
The derivation of high-order compact schemes is algebraically
demanding, hence these schemes are often taylor-made for a specific
application or a rather smaller class of problems (with some notable
exceptions as, for example Lele's paper \cite{Lele92}).
The algebraic complexity is even higher in the numerical stability analysis of
these schemes. Unlike for standard second-order schemes, the
established stability notions imply formidable algebraic problems for
high-order compact schemes. As a result, there are relatively few stability results for
high-order compact schemes in the literature. This is even more
pronounced in higher spatial dimension, as most of the existing
studies with analytical stability results for high-order compact
schemes are limited to a one-dimensional setting.

Most works focus on the isotropic case where the main part of the
differential operator is given by the Laplacian.
Another layer of complexity is added when the
anisotropic case is considered and mixed second-order derivative
terms are present in the operator. Few works on high-order compact
schemes address this problem, and either study constant coefficient
problems \cite{FoKa06} or specific equations \cite{DuFo12}.

Consequently, our aim in the present paper is to establish a
high-order compact methodology for a class of parabolic partial
differential equations with time and space dependent coefficients and
mixed second-order derivative terms in arbitrary spatial
dimension. 
% Problems of this type arise frequently in computational fluid dynamics and computational finance.
We derive general conditions
on the coefficients which allow to obtain a high-order compact scheme
which is fourth-order accurate in space and second-order accurate in time.
Moreover, we perform a 
% thorough 
von Neumann stability analysis of the
Cauchy problem in two and three
spatial dimensions for vanishing mixed derivative terms, and also give
partial results for the general case.
As an application example we consider the pricing of European Power
Put Basket options with two and three underlying assets in the multidimensional Black-Scholes model. The 
% pricing
partial differential equation
features second-order mixed derivative terms and, as an
additional difficulty, is supplemented by an
initial condition with low regularity. We use the smoothing operators
of Kreiss et al.\ \cite{KrThWi70} to restore high-order convergence.

The rest of this paper is organised as follows. In the next section,
we state the general parabolic partial differential equation in $n$
spatial dimensions and give the central difference
approximation for the associated elliptic problem.  We then derive
auxiliary relations for the higher-order derivatives appearing in the
truncation error of the central difference approximation in
Section~\ref{sec:auxiliary}.
In Section~\ref{sec:conditions} we give conditions on the coefficients
of the partial differential equation under which a high-order compact scheme is obtainable.
Semi-discrete high-order compact schemes in $n=2$ and $n=3$ space
dimensions are derived in
Section~\ref{sec:hocsemi}. Section~\ref{sec:hoctime} discusses the
time discretisation. A thorough von Neumann
stability analysis of the Cauchy problem in $n=2$ and $n=3$ space
dimensions is performed in Section~\ref{sec:stability}.
In Section~\ref{Section_Application} we apply the schemes to option pricing problems for
European Basket Power Put options and report results of our 
numerical experiments in Section~\ref{sec:numexp}.
Section~\ref{sec:Conclusion_n_dim_HOC} concludes.

%%% Local Variables: 
%%% mode: latex
%%% TeX-master: "Project_2_Masterfile"
%%% End: 

\section{Parabolic problem and its central difference approximation}

We consider the following parabolic partial differential equation with
mixed derivative terms in $n$ spatial dimensions for $u=u(x_1,\dots, x_n,\tau)$,
\begin{align}\label{basic_general_pde_nD_without_usage_of_f}
 u_{\tau} + \sum_{i=1}^n a_i \frac{\partial^2 u}{\partial x_i^2} + \sum\limits_{\substack{i,j=1\\ i<j}}^n b_{ij} \frac{\partial^2 u}{\partial x_i \partial x_j} + \sum\limits_{i=1}^n c_i \frac{\partial u}{\partial x_i}=&g \quad \text{ in } \Omega \times \Omega_{\tau},
\end{align} 
with initial condition $u_0=u(x_1 , \ldots x_n,0)$ and suitable
boundary conditions, with space- and time-dependent coefficients
$a_i=a_i(x_1 , \ldots x_n, \tau)<0$, $b_{ij}=b_{ij}(x_1 , \ldots x_n,
\tau)$, $c_i=c_i(x_1 , \ldots x_n, \tau)$ and $g=g(x_1 , \ldots x_n,
\tau)$. The spatial domain $\Omega \subset\mathbb{R}^n$  is of
$n$-dimensional rectangular shape with
$\Omega  = \Omega_1  \times \ldots \times \Omega_n $ and $x_i \in
\Omega_i = \bigl[x_{\min}^{(i)} , x_{\max}^{(i)} \bigr]$ with
$x_{\min}^{(i)} < x_{\max}^{(i)}$ for $i \in \{1, \ldots , n\}$. The temporal domain is given by $\Omega_{\tau} = \left]0,\tau_{\max}\right]$ with $\tau_{\max} >0$.
The functions $a(\cdot , \tau)$, $b(\cdot , \tau)$, $c(\cdot , \tau)$ and $g(\cdot , \tau)$ are assumed to be in $C^2(\Omega)$ for any $\tau \in \Omega_{\tau}$, $u(\cdot , \tau)\in C^6(\Omega)$ and $u$ is assumed to be differentiable with respect to $\tau$. Introducing $f:=-u_{\tau} + g$ we can rewrite \eqref{basic_general_pde_nD_without_usage_of_f} as
\begin{align}\label{basic_general_pde_nD}
\sum_{i=1}^n a_i \frac{\partial^2 u}{\partial x_i^2} + \sum\limits_{\substack{i,j=1\\ i<j}}^n b_{ij} \frac{\partial^2 u}{\partial x_i \partial x_j} + \sum\limits_{i=1}^n c_i \frac{\partial u}{\partial x_i}= f.
\end{align}

We start by defining a grid on $\Omega$, 
\begin{align}\label{n_dimensional_Grid_general_general_stepsizes}
G^{(n)} :=& \bigl\{\big (x^{(1)}_{i_1}, x^{(2)}_{i_2} ,\ldots , x^{(n)}_{i_n}\big) \in \Omega \text{ } | \text{ } x^{(k)}_{i_k} = x_{\min}^{(k)} +i_k \Delta x_k, 0\leq i_k \leq N_k, \, k=1, 2,\dots, n \bigr\},
\end{align}
where $\Delta x_k = \big({x_{\max}^{(k)}- x_{\min}^{(k)}}\big)/{N_k}>0$  are the step sizes in the $k$-th direction with $N_k \in \mathbb{N} $ for $k=1,2, \ldots , n$. 
We use $\interior{G}^{(n)}$ for the interior of $G^{(n)}$. On this grid we denote by $U_{i_1, \ldots , i_n}$ the discrete approximation of the continuous solution $u$ at the point $\big(x_{i_1}^{(1)}, x^{(2)}_{i_2} ,\ldots , x_{i_n}^{(n)} \big) \in G^{(n)}$ and time $\tau\in \Omega_{\tau}$.
Using the central difference operator $D_k^c$ and the standard second-order central difference operator $D_{k}^2$   in $x_k$-direction we get
%\begin{eqnarray}\label{consistencyequations_general_pde_nD}
% \begin{array}{rcl}
\begin{align*}
\frac{\partial^2 u}{\partial x_k^2}=&D^2_k u
% \left(x_{i_1} , \ldots , x_{i_n}\right)
- \frac{(\Delta x_k)^2}{12}\frac{\partial^4 u}{\partial x_k^4} +
\mathcal{O}\left((\Delta x_k)^4\right),\\
\frac{\partial u}{\partial x_k}=&D^c_k u
% \left(x_{i_1} , \ldots , x_{i_n}\right) 
- \frac{(\Delta x_k)^2}{6}\frac{\partial^3 u}{\partial x_k^3}+ \mathcal{O}\left((\Delta x_k)^4\right),\numberthis \label{consistencyequations_general_pde_nD}\\
\frac{\partial^2 u}{\partial x_k \partial x_p}=&D^c_kD^c_p u
% \left(x_{i_1} , \ldots , x_{i_n}\right) 
- \frac{(\Delta x_k)^2}{6}\frac{\partial^4 u}{\partial x_k^3 \partial x_p} - \frac{(\Delta x_p)^2}{6}\frac{\partial^4 u}{\partial x_k \partial x_p^3}+ \mathcal{O}\left((\Delta x_k)^4\right)\\
&  + \mathcal{O}\left((\Delta x_k)^2 (\Delta x_p)^2\right) + \mathcal{O}\left((\Delta x_p)^4\right)+ \mathcal{O}\left(\frac{(\Delta x_k)^6}{\Delta x_p}\right),
\end{align*}
%\end{array}
%\end{eqnarray}
for $k,p \in \{1, 2,\ldots , n\}$ and $k \neq p$, evaluated at the grid points $\big(x^{(1)}_{i_1}, x^{(2)}_{i_2} ,\ldots , x^{(n)}_{i_n}\big)\in \interior{G}^{(n)}$. 
% The error terms contain derivatives of $u$ up to sixth order, thus we require $u(\cdot , \tau) \in C^6\left( \Omega \right)$ for all $\tau \in \Omega_{\tau}$. 
Using the approximations \eqref{consistencyequations_general_pde_nD} in \eqref{basic_general_pde_nD} gives
\begin{align}\label{semi_discrete_basic_general_pde_nD}
\begin{split}
f=& \sum\limits_{i=1}^n a_i D^2_i u  + \sum\limits_{\substack{i,j=1\\ i<j}}^n b_{ij} D^c_iD^c_j u + \sum\limits_{i=1}^n c_i D^c_iu - \sum_{i=1}^n \frac{a_i(\Delta x_i)^2}{12}\frac{\partial^4 u}{\partial x_i^4}\\
&  - \sum\limits_{\substack{i,j=1\\ i<j}}^n b_{ij}\left[\frac{(\Delta x_i)^2}{6}\frac{\partial^4 u}{\partial x_i^3 \partial x_j} + \frac{(\Delta x_j)^2}{6}\frac{\partial^4 u}{\partial x_i \partial x_j^3}\right] - \sum\limits_{i=1}^n \frac{c_i (\Delta x_i)^2}{6}\frac{\partial^3 u}{\partial x_i^3} + \varepsilon,
\end{split}
\end{align}
where $\varepsilon \in \mathcal{O}\left(h^4 \right)$ if $\Delta x_i
\in \mathcal{O}\left(h\right)$ for $i=1,2, \ldots , n$ for a step size
$h>0$. If the consistency error is in $\mathcal{O}\left(h^4 \right)$, we
call the scheme high-order. In order to achieve a high-order scheme we
need to find second-order approximations of the derivatives
$\frac{\partial^3 u}{\partial x_i^3 }$, $\frac{\partial^4 u}{\partial
  x_i^4 }$ and $\frac{\partial^4 u}{\partial x_i^3 \partial x_j}$ for
$i,j \in \{1 , \ldots , n\}$ with $i \neq j$. We call the scheme
high-order compact, if we can achieve this using only points from a
compact computational stencil for  $x = \big(x^{(1)}_{i_1}, x^{(2)}_{i_2} ,\ldots , x^{(n)}_{i_n}\big)\in \interior{G}^{(n)} $. We have
\begin{align}\label{Def_compact_stencil_n_D}
\hat{U}\left(x\right) =  \bigl\{ \big(x^{(1)}_{i_1+k_1},x^{(2)}_{i_2+k_2} \ldots , x^{(n)}_{i_n+k_n}\big) \in G^{(n)}\text{ }| \text{ } k_m \in \{-1,0,1\} \text{ for } m=1,2,\ldots , n \bigr\} 
\end{align}
for $x=\big(x^{(1)}_{i_1},x^{(2)}_{i_2} \ldots , x^{(n)}_{i_n}\big) $ as the compact computational stencil and define $U_{i_1, \ldots , i_n}\approx u\big(x^{(1)}_{i_1},x^{(2)}_{i_2} \ldots , x^{(n)}_{i_n}\big)$.

\section{Auxiliary relations for higher derivatives}
\label{sec:auxiliary}
In this section we calculate auxiliary relations for the higher derivatives appearing in \eqref{semi_discrete_basic_general_pde_nD}. These relations for the higher derivatives can be calculated by differentiating \eqref{basic_general_pde_nD}. In doing so no additional error is introduced. Differentiating equation \eqref{basic_general_pde_nD} with respect to $x_k$ and then solving for $\frac{\partial^3 u}{\partial x_k^3}$ leads to
\begin{align*}
\frac{\partial^3 u}{\partial x_k^3} =& -\sum\limits_{\substack{i=1\\i \neq k}}^n  \frac{a_i}{a_k} \frac{\partial^3 u}{\partial x_i^2 \partial x_k} 
- \sum\limits_{\substack{i=1\\i \neq k}}^n \frac{1}{a_k}\frac{\partial a_i}{\partial x_k}\frac{\partial^2 u}{\partial x_i^2}  
- \frac{1}{a_k}\frac{\partial a_k}{\partial x_k}\frac{\partial^2 u}{\partial x_k^2} - \sum\limits_{\substack{i,j=1\\ i<j}}^n  \frac{b_{ij}}{a_k} \frac{\partial^3 u}{\partial x_i \partial x_j\partial x_k} \\
&- \sum\limits_{\substack{i,j=1\\ i<j}}^n\frac{1}{a_k}\frac{\partial b_{ij}}{\partial x_k} \frac{\partial^2 u}{\partial x_i \partial x_j}   -\sum\limits_{i=1}^n \frac{c_i}{a_k} \frac{\partial^2 u}{\partial x_i \partial x_k} - \sum\limits_{i=1}^n\frac{1}{a_k}\frac{\partial c_i}{\partial x_k} \frac{\partial u}{\partial x_i}  + \frac{1}{a_k}\frac{\partial f}{\partial x_k} =: A_{k} \numberthis \label{uxxx_general_pde_nD}
\end{align*}
for $k=1 , \ldots , n$. 
The relation for $A_k$ can be approximated 
with consistency order two on the compact stencil \eqref{Def_compact_stencil_n_D}
using the central difference operator, 
as all derivatives of $u$ in the above equation are only differentiated up to twice in each direction.

Differentiating \eqref{basic_general_pde_nD} twice with respect to
$x_k$, and solving the resulting equation for $\frac{\partial^4
  u}{\partial x_k^4}$, we obtain
\begin{align}
\frac{\partial^4 u}{\partial x_k^4}  = &
-\sum\limits_{\substack{i=1\\i \neq k}}^n
\left[\frac{a_i}{a_k}\frac{\partial^4 u}{\partial x_i^2 \partial
    x_k^2} +  \frac{2}{a_k}\frac{\partial a_i}{\partial
    x_k}\frac{\partial^3 u}{\partial x_i^2 \partial x_k} +
  \frac{1}{a_k} \frac{\partial^2 a_i}{\partial x_k^2} \frac{\partial^2
    u}{\partial x_i^2}\right] - \frac{2}{a_k}\frac{\partial
  a_k}{\partial x_k}\frac{\partial^3 u}{\partial x_k^3} \nonumber\\
&- \frac{1}{a_k}\frac{\partial^2 a_k}{\partial x_k^2} \frac{\partial^2 u}{\partial x_k^2}
 -\! \sum\limits_{\substack{i,j=1\\i<j\\i,j \neq k}}^n \left[
   \frac{b_{ij}}{a_k}\frac{\partial^4 u}{\partial x_i \partial
     x_j \partial x_k^2} + \frac{2}{a_k} \frac{\partial
     b_{ij}}{\partial x_k}\frac{\partial^3 u }{\partial x_i \partial
     x_j \partial x_k} + \frac{1}{a_k}\frac{\partial^2
     b_{ij}}{\partial x_k^2}\frac{\partial^2 u}{\partial x_i \partial
     x_j} \right]
\nonumber\\
& - \sum\limits_{i=1}^{k-1} \frac{b_{ik}}{a_k}\frac{\partial^4
  u}{\partial x_i \partial x_k^3} - \sum\limits_{i=1}^{k-1} \left[
  \frac{2}{a_k} \frac{\partial b_{ik}}{\partial x_k}\frac{\partial^3
    u}{\partial x_i \partial x_k^2} + \frac{1}{a_k}\frac{\partial^2
    b_{ik}}{\partial x_k^2}\frac{\partial^2 u}{\partial x_i \partial
    x_k} \right]
\nonumber\\
& - \sum\limits_{j=k+1}^n \frac{b_{kj}}{a_k}\frac{\partial^4
  u}{\partial x_j \partial x_k^3} - \sum\limits_{j=k+1}^n \left[
  \frac{2}{a_k}\frac{\partial b_{kj}}{\partial x_k}\frac{\partial^3 u
  }{\partial x_j \partial x_k^2} + \frac{1}{a_k} \frac{\partial^2
    b_{kj}}{\partial x_k^2}\frac{\partial^2 u}{\partial x_j \partial
    x_k}\right]\label{uxxxx_general_pde_nD} \\
& - \sum\limits_{i=1}^n \left[ \frac{c_i}{a_k} \frac{\partial^3
    u}{\partial x_i \partial x_k^2} + \frac{2}{a_k}\frac{\partial
    c_i}{\partial x_k}\frac{\partial^2 u}{\partial x_i \partial x_k} +
  \frac{1}{a_k}\frac{\partial^2 c_i}{\partial x_k^2}\frac{\partial
    u}{\partial x_i}\right] + \frac{1}{a_k}\frac{\partial^2
  f}{\partial x_k^2}\nonumber\\
=:& B_{k} - \sum\limits_{i=1}^{k-1} \frac{b_{ik}}{a_k} \frac{\partial^4 u}{\partial x_i \partial x_k^3} - \sum\limits_{j=k+1}^n \frac{b_{kj}}{a_k} \frac{\partial^4 u}{\partial x_j \partial x_k^3} . \nonumber
\end{align}
We can approximate $B_{k}$ with second order consistency on the compact stencil \eqref{Def_compact_stencil_n_D}, when using the central difference operator and the auxiliary relations for $A_k$ in \eqref{uxxx_general_pde_nD} for $k=1,\ldots, n$. Differentiating equation \eqref{basic_general_pde_nD} once with respect to $x_k$ and once with respect to $x_p$  leads to
\begin{align*}
&a_k\frac{\partial^4 u}{\partial x_k^3 \partial x_p} + a_p \frac{\partial^4 u}{\partial x_k \partial x_p^3}  \\
=& -\sum\limits_{\substack{i=1\\i\neq k,p }}^n \left[ a_i \frac{\partial^4 u}{\partial x_i^2 \partial x_k \partial x_p} + \frac{\partial a_i}{\partial x_k}\frac{\partial^3 u}{\partial x_i^2 \partial x_p} +  \frac{\partial a_i}{\partial x_p}\frac{\partial^3 u}{\partial x_i^2 \partial x_k} + \frac{\partial^2 a_i}{\partial x_k \partial x_p}\frac{\partial^2 u}{\partial x_i^2}\right] -\frac{\partial a_p}{\partial x_k}\frac{\partial^3 u}{\partial x_p^3 }\\
&  -  \frac{\partial a_p}{\partial x_p}\frac{\partial^3 u}{\partial x_p^2 \partial x_k} - \frac{\partial^2 a_p}{\partial x_k \partial x_p}\frac{\partial^2 u}{\partial x_p^2}- \frac{\partial a_k}{\partial x_k}\frac{\partial^3 u}{\partial x_k^2 \partial x_p} -  \frac{\partial a_k}{\partial x_p}\frac{\partial^3 u}{\partial x_k^3 } - \frac{\partial^2 a_k}{\partial x_k \partial x_p}\frac{\partial^2 u}{\partial x_k^2}\\
& - \sum\limits_{\substack{i,j=1\\i<j}}^n \left[ b_{ij} \frac{\partial^4 u}{\partial x_i \partial x_j \partial x_k \partial x_p} + \frac{\partial b_{ij}}{\partial x_k}\frac{\partial^3 u}{\partial x_i \partial x_j \partial x_p} + \frac{\partial b_{ij}}{\partial x_p}\frac{\partial^3 u}{\partial x_i \partial x_j \partial x_k} + \frac{\partial^2 b_{ij}}{\partial x_k \partial x_p}\frac{\partial^2 u}{\partial x_i \partial x_j}\right]\\
& - \sum\limits_{i=1}^n \left[c_i \frac{\partial^3 u}{\partial x_i \partial x_k \partial x_p} + \frac{\partial c_i}{\partial x_k}\frac{\partial^2 u}{\partial x_i \partial x_p}  + \frac{\partial c_i}{\partial x_p}\frac{\partial^2 u}{\partial x_i \partial x_k} + \frac{\partial^2 c_i}{\partial x_k \partial x_p}\frac{\partial u}{\partial x_i} \right] + \frac{\partial^2 f}{\partial x_k \partial x_p}=: C_{kp}, 
\end{align*}
where $C_{kp}$ can be approximated on the compact stencil
\eqref{Def_compact_stencil_n_D} using $A_k$ and $A_p$, as defined in
equation \eqref{uxxx_general_pde_nD}, and the central difference
operator for $k,p = 1 , \ldots , n$ with $k\neq p$. This can be
written as
\begin{align}\label{uxxxy_general_pde_nD}
\frac{\partial^4 u}{\partial x_k^3 \partial x_p} = \frac{C_{kp}}{a_k} - \frac{a_p}{a_k}\frac{\partial^4 u}{\partial x_k \partial x_p^3} .
\end{align}

\section{Conditions for obtaining a high-order compact scheme}
\label{sec:conditions}
In this section we derive conditions on the coefficients of the partial differential equation \eqref{basic_general_pde_nD_without_usage_of_f} 
under which it is possible to obtain a high-order compact scheme, 
i.e.\ only using points of the $n$-dimensional compact stencil \eqref{Def_compact_stencil_n_D} for discretisation 
and receiving a fourth-order scheme with $\Delta x_i \in \mathcal{O}\left( h\right)$ for $j=1, \ldots , n$ for a given step size $h>0$. 
Using equations \eqref{uxxx_general_pde_nD} and \eqref{uxxxx_general_pde_nD} and then \eqref{uxxxy_general_pde_nD} in \eqref{semi_discrete_basic_general_pde_nD} leads to
\begin{align}
f  = & \sum\limits_{i=1}^n a_iD_i^2 u  + \sum\limits_{\substack{i,j=1\\ i<j}}^n b_{ij} D^c_iD^c_j u + \sum\limits_{i=1}^n c_i D^c_iu - \sum\limits_{i=1}^n \frac{a_i (\Delta x_i)^2B_i}{12} + \varepsilon \nonumber\\
&
- \sum\limits_{\substack{i,j=1\\ i<j}}^n \frac{b_{ij}(\Delta x_i)^2C_{ij}}{12 a_i} - \sum\limits_{\substack{i,j=1\\ i<j}}^n \frac{b_{ij}}{12}\frac{\partial^4 u}{\partial x_i \partial x_j^3}\left[ (\Delta x_j)^2 - \frac{a_j (\Delta x_i)^2}{a_i} \right] - \sum\limits_{i=1}^n \frac{c_i (\Delta x_i)^2 A_i}{6} , \label{pde_to_discretise_n_D}
\end{align}
where $\varepsilon \in \mathcal{O}\left(h^4 \right)$, if $\Delta x_i
\in \mathcal{O}\left(h\right)$ for $i=1, \ldots , n$.
% for the step size $h$. 
The leading error terms are given by
$ \frac{b_{ij}}{12}\frac{\partial^4 u}{\partial x_i \partial x_j^3}\left[ (\Delta x_j)^2 - \frac{a_j (\Delta x_i)^2}{a_i} \right] $ for $i,j\in\{1,\ldots , n\}$ with $i \neq j$. If the conditions
% \begin{equation}\label{restriction_general_pde_nD_number_one}
% b_{ij} =  0\quad  \text{or} \quad  (\Delta x_j)^2  =  \frac{a_j(\Delta x_i)^2}{a_i} 
% \end{equation}
\begin{equation}\label{restriction_general_pde_nD_number_one}
b_{ij} =  0\quad  \text{or} \quad  \frac{(\Delta x_j)^2}{(\Delta x_i)^2}  =  \frac{a_j}{a_i}
\end{equation}
are fulfilled for all $i,j \in \{1, \ldots , n\}$ with $i \neq j$ these second order terms vanish and the resulting error term is of fourth order. 
Hence, for any partial differential equation
\eqref{basic_general_pde_nD_without_usage_of_f} which satisfies
\eqref{restriction_general_pde_nD_number_one} we obtain a high-order compact scheme.
In the case $b_{i,j} \equiv 0$ for all $i,j \in {1, \ldots, n}$, it is
possible to choose $\Delta x_i>0$ freely for each spatial direction, whereas in
other possible cases 
% for a high-order compact scheme 
there are interdependencies
for at least some of the step sizes. For each pair $(i,j)$ with $b_{ij}\neq 0$ the condition $\frac{(\Delta x_j)^2}{(\Delta x_i)^2}  =  \frac{a_j}{a_i}$ 
has to hold for all 
$x = \big(x^{(1)}_{i_1}, x^{(2)}_{i_2} ,\ldots , x^{(n)}_{i_n}\big)\in \interior{G}^{(n)} $.
%values of $x_i$ and $x_j$. 
This means ${a_j}/{a_i}$ has to be constant as ${(\Delta x_j)^2}/{(\Delta x_i)^2}$ is constant, 
% since we consider an equidistant grid in each spatial direction, 
see \eqref{n_dimensional_Grid_general_general_stepsizes}. 

\section{Semi-discrete high-order compact schemes}
\label{sec:hocsemi}

In this section we present the semi-discrete high-order compact
schemes in spatial dimensions
$n=2,3$. We consider the case where the cross derivatives do not
vanish, hence we assume, for simplicity, $a_i \equiv a$ 
in combination with $\Delta x_i = h>0$
for $i=1, \ldots n$ to satisfy condition
\eqref{restriction_general_pde_nD_number_one}. Our aim in this section is to derive a
semi-discrete scheme of the form
\begin{align}\label{semi_discrete_pde_two_dimensions_general}
%\begin{array}{rcl}
\sum\limits_{\hat{x}\in G^{(n)}} \left[M_x(\hat{x},\tau) \partial_{\tau}U_{i_1, \ldots , i_n}(\tau) + K_x(\hat{x},\tau) U_{i_1, \ldots , i_n}(\tau) \right] = & \tilde{g}(x,\tau) 
%\end{array}
\end{align}
at each point $x\in \interior{G}^{(n)}$ with $\Delta x_i = h>0$ for $i=1, \ldots , n$ and time $\tau$, where the
function $\tilde{g}: \interior{G}^{(n)} \times \Omega_{\tau}
\rightarrow \mathbb{R}$ depends on the function $g$ given in
\eqref{basic_general_pde_nD_without_usage_of_f}.

\subsection{Semi-discrete two-dimensional scheme} \label{Section_2DGeneralPDESemi_discrete}
In this section we derive the high-order compact discretisation of
\eqref{basic_general_pde_nD_without_usage_of_f} in spatial dimension
$n=2$. Considering the grid point $\bigl(x_{i_1}^{(1)},x_{i_2}^{(2)}\bigr) \in \interior{G}^{(2)}  $
with $\Delta x_1 = \Delta x_2 =h>0$ and time $\tau \in \Omega_{\tau}$ we are able to obtain the
coefficients $\hat{K}_{l,m}$ of $U_{l,m}\left(\tau\right)$ for $l \in
\{i_1 - 1, i_1, i_1 + 1\}$ and $m \in \{i_2 - 1, i_2, i_2 + 1\}$ on
the compact stencil by employing the central difference operator in \eqref{pde_to_discretise_n_D}.
To streamline notation we denote by $[\cdot ]_{k}$ the first
derivative with respect to $x_k$  and by $[ \cdot ]_{kp}$ the
second derivative, once in $x_k$- and once in $x_p$-direction with
$k,p \in \{1,2\}$. Note that in the following the functions $a$,
$b_{1,2}$, $c_1$, $c_2$ and $g$ are evaluated at
$\bigl(x_{i_1}^{(1)},x_{i_2}^{(2)}\bigr)\in \interior{G}^{(2)}$ and
$\tau \in \Omega_{\tau}$. We omit these arguments for the sake of
readability. The coefficients are given by:
\begin{small}
\begin{align*}
%\label{general_2D_pde_Coefficients_K_h_equation_1}
\hat{K}_{i_1,i_2}  =  &- {\frac {b_{{12}}[a]_{{{{1}}{{2}}}}}{3a}}
- {\frac {b_{{12}}[c_{{2}}]_{{{{1}}}}}{6a}}
+ {\frac {b_{{12}}[a]_{{{{2}}}}c_{{1}}}{6{a}^{2}}}
+ {\frac {2b_{{12}}[a]_{{{{1}}}}[a]_{{{{2}}}}}{3{a}^{2}}}
- \frac{[a]_{{{{{2}}}{{{2}}}}}}{3}
- {\frac {{c^{2}_{{1}}}}{6a}}
+ {\frac {2{[a]^{2}_{{{{1}}}}}}{3a}}\\
&
- \frac{[a]_{{{{{1}}}{{{1}}}}}}{3}
- {\frac {10a}{3{h}^{2}}}
- \frac{[c_{{2}}]_{{{{2}}}}}{3}
- \frac{[c_{{1}}]_{{{{1}}}}}{3}
- {\frac {b_{{12}}[c_{{1}}]_{{{{2}}}}}{6a}}
+ {\frac {2{[a]^{2}_{{{{2}}}}}}{3a}}
- {\frac {{c^{2}_{{2}}}}{6a}}
+ {\frac {{b^{2}_{{12}}}}{3a{h}^{2}}}
+ {\frac {b_{{12}}[a]_{{{{1}}}}c_{{2}}}{6{a}^{2}}} ,\\
\hat{K}_{i_1\pm 1,i_2}  = & {\frac {c_{{2}}[a]_{{{{2}}}}}{12a}}
- {\frac {{b^{2}_{{12}}}}{6a{h}^{2}}}
+ {\frac {b_{{12}}[a]_{{{{1}}{{2}}}}}{12a}}
- {\frac {c_{{1}}[a]_{{{{1}}}}}{12a}}
\mp {\frac {hb_{{12}}[a]_{{{{2}}}}[c_{{1}}]_{{{{1}}}}}{24{a}^{2}}}
\mp {\frac {hb_{{12}}[a]_{{{{1}}}}[c_{{1}}]_{{{{2}}}}}{24{a}^{2}}}
\pm \frac{h[c_{{1}}]_{{{{{1}}}{{{1}}}}}}{24}\\
&
\pm \frac{h[c_{{1}}]_{{{{{2}}}{{{2}}}}}}{24}
+ {\frac {{c^{2}_{{1}}}}{12a}}
\pm {\frac {hc_{{1}}[c_{{1}}]_{{{{1}}}}}{24a}}
\mp {\frac {h[a]_{{{{1}}}}[c_{{1}}]_{{{{1}}}}}{12a}}
\pm {\frac {hb_{{12}}[c_{{1}}]_{{{{1}}{{2}}}}}{24a}}
- {\frac {b_{{12}}[a]_{{{{2}}}}c_{{1}}}{12{a}^{2}}}
\pm {\frac {hc_{{2}}[c_{{1}}]_{{{{2}}}}}{24a}}\\
&
\mp {\frac {h[a]_{{{{2}}}}[c_{{1}}]_{{{{2}}}}}{12a}}
+ \frac{[c_{{1}}]_{{{{1}}}}}{6}
- {\frac {{[a]^{2}_{{{{1}}}}}}{6a}}
- {\frac {{[a]^{2}_{{{{2}}}}}}{6a}}
+ \frac{[a]_{{{{{2}}}{{{2}}}}}}{12}
+ \frac{[a]_{{{{{1}}}{{{1}}}}}}{12}
\mp {\frac {c_{{2}}b_{{12}}}{6ah}}
\mp {\frac {b_{{12}}[b_{{12}}]_{{{{1}}}}}{12ah}}\\
&
+ {\frac {b_{{12}}[c_{{1}}]_{{{{2}}}}}{12a}}
+ {\frac {2a}{3{h}^{2}}}
- {\frac {b_{{12}}[a]_{{{{1}}}}[a]_{{{{2}}}}}{6{a}^{2}}}
\pm {\frac {b_{{12}}[a]_{{{{2}}}}}{6ah}}
\mp {\frac {[b_{{12}}]_{{{{2}}}}}{6h}}
\pm {\frac {{b^{2}_{{12}}}[a]_{{{{1}}}}}{12{a}^{2}h}}
\pm {\frac {c_{{1}}}{3h}} ,\\
\hat{K}_{i_1,i_2\pm 1}  = & - {\frac {c_{{2}}[a]_{{{{2}}}}}{12a}}
- {\frac {{b^{2}_{{12}}}}{6a{h}^{2}}}
+ {\frac {b_{{12}}[c_{{2}}]_{{{{1}}}}}{12a}}
+ {\frac {b_{{12}}[a]_{{{{1}}{{2}}}}}{12a}}
+ {\frac {c_{{1}}[a]_{{{{1}}}}}{12a}}
\mp {\frac {hb_{{12}}[a]_{{{{2}}}}[c_{{2}}]_{{{{1}}}}}{24{a}^{2}}}
+ \frac{[c_{{2}}]_{{{{2}}}}}{6}
\\
&
\mp {\frac {hb_{{12}}[a]_{{{{1}}}}[c_{{2}}]_{{{{2}}}}}{24{a}^{2}}}
- {\frac {{[a]^{2}_{{{{1}}}}}}{6a}}
- {\frac {{[a]^{2}_{{{{2}}}}}}{6a}}
+ {\frac {{c^{2}_{{2}}}}{12a}}
+ \frac{[a]_{{{{{2}}}{{{2}}}}}}{12}
+ \frac{[a]_{{{{{1}}}{{{1}}}}}}{12}
\mp {\frac {b_{{12}}[b_{{12}}]_{{{{2}}}}}{12ah}}
\pm \frac{h[c_{{2}}]_{{{{{2}}}{{{2}}}}}}{24}\\
&
\pm \frac{h[c_{{2}}]_{{{{{1}}}{{{1}}}}}}{24}
+ {\frac {2a}{3{h}^{2}}}
\pm {\frac {hc_{{1}}[c_{{2}}]_{{{{1}}}}}{24a}}
\mp {\frac {h[a]_{{{{1}}}}[c_{{2}}]_{{{{1}}}}}{12a}}
- {\frac {b_{{12}}[a]_{{{{1}}}}[a]_{{{{2}}}}}{6{a}^{2}}}
\pm {\frac {hb_{{12}}[c_{{2}}]_{{{{1}}{{2}}}}}{24a}}
\pm {\frac {c_{{2}}}{3h}}
\\
&
- {\frac {b_{{12}}[a]_{{{{1}}}}c_{{2}}}{12{a}^{2}}}
\mp {\frac {h[a]_{{{{2}}}}[c_{{2}}]_{{{{2}}}}}{12a}}
\pm {\frac {hc_{{2}}[c_{{2}}]_{{{{2}}}}}{24a}}
\pm {\frac {{b^{2}_{{12}}}[a]_{{{{2}}}}}{12{a}^{2}h}}
\pm {\frac {b_{{12}}[a]_{{{{1}}}}}{6ah}}
\mp {\frac {c_{{1}}b_{{12}}}{6ah}}
\mp {\frac {[b_{{12}}]_{{{{1}}}}}{6h}},\\
\hat{K}_{i_1\pm 1,i_2-1}  = &  {\frac {{b^{2}_{{12}}}}{12a{h}^{2}}}
\mp {\frac {c_{{1}}c_{{2}}}{24a}}
\pm {\frac {[a]_{{{{2}}}}c_{{1}}}{24a}}
\mp  {\frac {b_{{12}}[c_{{2}}]_{{{{2}}}}}{48a}}
\pm  {\frac {[a]_{{{{2}}}}[b_{{12}}]_{{{{2}}}}}{24a}}
\pm  {\frac {[a]_{{{{1}}}}c_{{2}}}{24a}}
\pm  {\frac {[a]_{{{{1}}}}[b_{{12}}]_{{{{1}}}}}{24a}}\\
&
\mp  {\frac {c_{{1}}[b_{{12}}]_{{{{1}}}}}{48a}}
\mp  {\frac {b_{{12}}[c_{{1}}]_{{{{1}}}}}{48a}}
\mp  {\frac {c_{{2}}[b_{{12}}]_{{{{2}}}}}{48a}}
\mp  {\frac {b_{{12}}[b_{{12}}]_{{{{1}}{{2}}}}}{48a}}
\mp  \frac{[c_{{1}}]_{{{{2}}}}}{24}
\mp  \frac{[c_{{2}}]_{{{{1}}}}}{24}
\mp  \frac{[b_{{12}}]_{{{{{1}}}{{{1}}}}}}{48}\\
&
\mp \frac{[b_{{12}}]_{{{{{2}}}{{{2}}}}}}{48}
\mp {\frac {b_{{12}}[b_{{12}}]_{{{{2}}}}}{24ah}}
\pm {\frac {c_{{2}}b_{{12}}}{12ah}}
\pm {\frac {b_{{12}}[b_{{12}}]_{{{{1}}}}}{24ah}}
\pm {\frac {b_{{12}}[a]_{{{{2}}}}[b_{{12}}]_{{{{1}}}}}{48{a}^{2}}}
\pm {\frac {b_{{12}}[a]_{{{{1}}}}c_{{1}}}{48{a}^{2}}}
+ {\frac {a}{6{h}^{2}}}\\
&
+ {\frac {{b^{2}_{{12}}}[a]_{{{{2}}}}}{24{a}^{2}h}}
\pm {\frac {b_{{12}}[a]_{{{{2}}}}c_{{2}}}{48{a}^{2}}}
+ {\frac {b_{{12}}[a]_{{{{1}}}}}{12ah}}
\mp {\frac {b_{{12}}[a]_{{{{2}}}}}{12ah}}
- {\frac {c_{{1}}b_{{12}}}{12ah}}
\pm {\frac {b_{{12}}[a]_{{{{1}}}}[b_{{12}}]_{{{{2}}}}}{48{a}^{2}}}
- {\frac {[b_{{12}}]_{{{{1}}}}}{12h}}\\
&
\pm {\frac {[b_{{12}}]_{{{{2}}}}}{12h}}
\mp {\frac {{b^{2}_{{12}}}[a]_{{{{1}}}}}{24{a}^{2}h}}
\mp {\frac {b_{{12}}}{4{h}^{2}}}
- {\frac {c_{{2}}}{12h}}
\pm {\frac {c_{{1}}}{12h}} ,\\
\hat{K}_{i_1\pm 1,i_2+1}  = &  {\frac {{b^{2}_{{12}}}}{12a{h}^{2}}}
\pm {\frac {c_{{1}}c_{{2}}}{24a}}
\mp {\frac {[a]_{{{{2}}}}c_{{1}}}{24a}}
\pm {\frac {b_{{12}}[c_{{2}}]_{{{{2}}}}}{48a}}
\mp {\frac {[a]_{{{{2}}}}[b_{{12}}]_{{{{2}}}}}{24a}}
\mp {\frac {[a]_{{{{1}}}}c_{{2}}}{24a}}
\mp {\frac {[a]_{{{{1}}}}[b_{{12}}]_{{{{1}}}}}{24a}}\\
&
\pm {\frac {c_{{1}}[b_{{12}}]_{{{{1}}}}}{48a}}
\pm {\frac {b_{{12}}[c_{{1}}]_{{{{1}}}}}{48a}}
\pm {\frac {c_{{2}}[b_{{12}}]_{{{{2}}}}}{48a}}
\pm {\frac {b_{{12}}[b_{{12}}]_{{{{1}}{{2}}}}}{48a}}
\pm \frac{[c_{{1}}]_{{{{2}}}}}{24}
\pm \frac{[c_{{2}}]_{{{{1}}}}}{24}
\pm \frac{[b_{{12}}]_{{{{{1}}}{{{1}}}}}}{48}\\
&
\pm \frac{[b_{{12}}]_{{{{{2}}}{{{2}}}}}}{48}
+ {\frac {b_{{12}}[b_{{12}}]_{{{{2}}}}}{24ah}}
\pm {\frac {c_{{2}}b_{{12}}}{12ah}}
\pm {\frac {b_{{12}}[b_{{12}}]_{{{{1}}}}}{24ah}}
\mp {\frac {b_{{12}}[a]_{{{{2}}}}[b_{{12}}]_{{{{1}}}}}{48{a}^{2}}}
\mp {\frac {b_{{12}}[a]_{{{{1}}}}c_{{1}}}{48{a}^{2}}}
+ {\frac {a}{6{h}^{2}}}\\
&
- {\frac {{b^{2}_{{12}}}[a]_{{{{2}}}}}{24{a}^{2}h}}
\mp {\frac {b_{{12}}[a]_{{{{2}}}}c_{{2}}}{48{a}^{2}}}
- {\frac {b_{{12}}[a]_{{{{1}}}}}{12ah}}
\mp {\frac {b_{{12}}[a]_{{{{2}}}}}{12ah}}
+ {\frac {c_{{1}}b_{{12}}}{12ah}}
\mp {\frac {b_{{12}}[a]_{{{{1}}}}[b_{{12}}]_{{{{2}}}}}{48{a}^{2}}}
+ {\frac {[b_{{12}}]_{{{{1}}}}}{12h}}\\
&
\pm {\frac {[b_{{12}}]_{{{{2}}}}}{12h}}
\mp {\frac {{b^{2}_{{12}}}[a]_{{{{1}}}}}{24{a}^{2}h}}
\pm {\frac {b_{{12}}}{4{h}^{2}}}
+ {\frac {c_{{2}}}{12h}}
\pm {\frac {c_{{1}}}{12h}}.
\end{align*}
\end{small}
Analogously, we obtain the coefficients $\hat{M}_{l,m}$ of
$\partial_{\tau}U_{l,m}\left(\tau\right)$ for $l \in \{i_1 - 1, i_1,
i_1 + 1\}$ and $m \in \{i_2 - 1, i_2, i_2 + 1\}$ at each point
$\bigl(x_{i_1}^{(1)},x_{i_2}^{(2)}\bigr)\in \interior{G}^{(2)}$ and
time $\tau \in \Omega_{\tau}$, 
%\begin{eqnarray}\label{general_2D_pde_Coefficients_M_h_equation}
% \begin{array}{rcl}
\begin{align*}
%\label{general_2D_pde_Coefficients_M_h_equation}
\begin{split}
\hat{M}_{i_1+1, i_2\pm 1} = & \hat{M}_{i_1-1, i_2\mp 1}  = \pm {\frac {b_{{12}}}{48a}},
\quad \hat{M}_{i_1,i_2\pm 1}  =   \frac{1}{12}
\mp {\frac {h[a]_{{{{2}}}}}{12a}}
\mp {\frac {b_{{12}}h[a]_{{{{1}}}}}{24{a}^{2}}}
\pm {\frac {c_{{2}}h}{24a}}, 
\\
\hat{M}_{i_1\pm 1, i_2}  = & \frac{1}{12}
\mp {\frac {b_{{12}}h[a]_{{{{2}}}}}{24{a}^{2}}}
\pm {\frac {hc_{{1}}}{24a}}
\mp {\frac {h[a]_{{{{1}}}}}{12a}} , \quad \hat{M}_{i_1,i_2}  =  \frac{2}{3},
\end{split}
\end{align*}
%\end{array}
%\end{eqnarray}
where $\Delta x_1=\Delta x_2 = h>0$. Additionally, for $x\in \interior{G}^{(2)}$,  $\tau \in \Omega_{\tau}$,
%\begin{eqnarray}\label{rhs_of_eq_system_function_g_two_dim}
%\begin{array}{rcl}
\begin{align*}
%\label{rhs_of_eq_system_function_g_two_dim}
\begin{split}
\tilde{g}(x,\tau)  = & {\frac { \left({h}^{2}{a}^{2}c_{{1}} -2{h}^{2}{a}^{2}[a]_{{{{1}}}}-b_{{12}}{h}^
{2}[a]_{{{{2}}}}a \right) [g]_{{{{1}}}}}{12
{a}^{3}}}
+ \frac{{h}^{2}[g]_{{{{{1}}}{{{1}}}}}}{12}
+ {\frac {b_{{12}}{h}^{2}[g]_{{{{1}}{{2}}}}}{12a}}
\\
&
+ {\frac { \left( 
{h}^{2}{a}^{2}c_{{2}}-b_{{12}}{h}^{2}[a]_{{{{1}}}}a -2{h}^{2}{a}^{2}[a]_{{{{2}}}} \right) [g]_{{x
_{{2}}}}}{12{a}^{3}}}
+ \frac{{h}^{2}[g]_{{{{{2}}}{{{2}}}}}}{12}
+ g
\end{split}
\end{align*}
%\end{array}
%\end{eqnarray}
holds, where $\Delta x_1 = \Delta x_2 = h>0$ was used. We have 
$K_x(x_{n_1}^{(1)},x_{n_2}^{(2)}, \tau)  =\hat{ K}_{n_1,n_2}$ and
$M_x(x_{n_1}^{(1)},x_{n_2}^{(2)}, \tau)  =\hat{ M}_{n_1,n_2}$
in \eqref{semi_discrete_pde_two_dimensions_general} with $n_1 \in \{
i_1 -1, i_1, i_1+1\}$ and  $n_2 \in \{ i_2 -1, i_2, i_2+1\}$ for
$x=\bigl(x_{i_1}^{(1)},x_{i_2}^{(2)}\bigr)\in \interior{G}^{(2)}$
and $\tau \in \Omega_{\tau}$. $K_x$ and $M_x$ are zero otherwise and
the approximation only uses points of the compact stencil.
%, where we define $W_2:=\{ i_1 -1, i_1, i_1+1\} \times \{ i_2 -1, i_2, i_2+1\}$, and 
%\begin{eqnarray}
%\notag\begin{array}{rcl}
%K_x(\hat{x}, \tau) = 0 &\text{  and }& M_x(\hat{x}, \tau) = 0
%\end{array}
%\end{eqnarray}
%with $\hat{x}\in G_h^{(2)} \setminus W_2$ for
%$\left(x_{i_1}^{(1)},x_{i_2}^{(2)}\right)\in \interior{G}_h^{(2)}$
%and $\tau \in \Omega_{\tau}$ in
%\eqref{semi_discrete_pde_two_dimensions_general} for the spatial
%interior.

\subsection{Semi-discrete three-dimensional scheme}\label{Section_3DGeneralPDESemi_discrete}
In this section we derive the high-order compact discretisation of \eqref{basic_general_pde_nD_without_usage_of_f} in spatial dimension $n=3$. 
Considering the conditions in \eqref{restriction_general_pde_nD_number_one} we observe that in the three-dimensional case 
we have three different possibilities to satisfy the conditions 
and thus obtain a high-order compact scheme. We focus on the case 
$a=a_1\equiv a_2 \equiv a_3$ and set $h=\Delta x_1 = \Delta x_2 = \Delta x_3.$
Considering an interior grid point
$\bigl(x_{i_1}^{(1)},x_{i_2}^{(2)},x_{i_3}^{(3)}\bigr)\in
\interior{G}^{(3)}$ and time $\tau \in
\Omega_{\tau}$ we are able to produce the coefficients $\hat{K}_{k,l,m}$ of
$U_{k,l,m}\left(\tau\right)$ for $k\in \{ i_1 - 1, i_1, i_1 + 1\}$,
$l\in \{ i_2 - 1, i_2, i_2 + 1\}$ and $m\in \{ i_3 - 1, i_3, i_3 +
1\}$ by employing the central difference operator in
\eqref{pde_to_discretise_n_D}.
Again, to streamline the notation we denote by $[\cdot ]_{k}$ and $[
\cdot ]_{kp}$ the first and second derivative of the coefficients with
respect to $x_k$, and with respect to $x_k$ and $x_p$, respectively.
Note again that in the following $a, b_{12}, b_{13}, b_{23}, c_1, c_2, c_3$ and $g$ are evaluated at
$\bigl(x_{i_1}^{(1)},x_{i_2}^{(2)},x_{i_3}^{(3)} \bigr)\in
\interior{G}^{(3)}$ and $\tau\in \Omega_{\tau}$, where $\Delta x_i =h>0$ for $i=1,2,3$.
We omit these arguments for the sake of readability. 
Due to the length of the coefficient expressions $\hat{K}_{k,l,m}$,
they are given in the appendix.

In a similar way we define  $\hat{M}_{k,l,m}$ as the coefficient of $\partial_{\tau} U_{k,l,m}\left(\tau\right)$ for \\
$k\in \{ i_1 - 1, i_1, i_1 + 1\}$, $l\in \{ i_2 - 1, i_2, i_2 + 1\}$ and $m\in \{ i_3 - 1, i_3, i_3 + 1\}$ by
\begin{align*}
\hat{M}_{i_1\pm 1,i_2-1,i_3}  = & \hat{M}_{i_1 \mp 1,i_2+1,i_3 } = \mp  {\frac {b_{{12}}}{48a}} \text{, }\quad \hat{M}_{i_1,i_2,i_3} =  \frac{1}{2},
\\
\hat{M}_{i_1\pm 1,i_2,i_3-1}  = & \hat{M}_{i_1\mp 1,i_2,i_3+1} = \mp {\frac {b_{{13}}}{48a}},\quad \hat{M}_{i_1,i_2\pm 1,i_3-1}  =  \hat{M}_{i_1,i_2\mp 1, i_3+1} = \mp {\frac {b_{{23}}}{48a}},
\\
\hat{M}_{i_1 \pm 1,i_2,i_3}  = &  \frac{1}{12}
 \mp {\frac {hb_{{12}}[a]_{{{{2}}}}}{24{a}^{2}}}
 \mp {\frac {hb_{{13}}[a]_{{{{3}}}}}{24{a}^{2}}}
 \pm {\frac {hc_{{1}}}{24a}}
\mp {\frac {h[a]_{{{{1}}}}}{12a}},
\\
\hat{M}_{i_1,i_2\pm 1,i_3 }  = &  \frac{1}{12}
 \mp {\frac {hb_{{12}}[a]_{{{{1}}}}}{24{a}^{2}}}
  \mp {\frac {hb_{{23}}[a]_{{{{3}}}}}{24{a}^{2}}}
 \pm {\frac {hc_{{2}}}{24a}}
 \mp {\frac {h[a]_{{{{2}}}}}{12a}},
%\numberthis \label{general_3D_pde_Coefficients_M_h_equation}
\\
\hat{M}_{i_1,i_2,i_3\pm 1}  = & \frac{1}{12}
 \mp {\frac {hb_{{23}}[a]_{{{{2}}}}}{24{a}^{2}}}
 \mp {\frac {hb_{{13}}[a]_{{{{1}}}}}{24{a}^{2}}}
 \pm {\frac {hc_{{3}}}{24a}}
\mp {\frac {h[a]_{{{{3}}}}}{12a}},
\\
\hat{M}_{i_1 \pm 1,i_2-1,i_3-1}  = & \hat{M}_{i_1\pm 1,i_2+1,i_3-1}=
\hat{M}_{i_1 \pm 1,i_2-1,i_3+1}  = \hat{M}_{i_1\pm 1,i_2+1,i_3+1}=0.
\end{align*}
For the right hand side of
\eqref{semi_discrete_pde_two_dimensions_general} we have for
$x=\bigl(x_{i_1}^{(1)},x_{i_2}^{(2)},x_{i_3}^{(3)} \bigr)\in
\interior{G}^{(3)}$, $\tau\in \Omega_{\tau}$,
\begin{align*}
\tilde{g}(x,\tau)  = & 
{\frac { \left( c_{{1}}{h}^{2}a-2{h}^{2}[a]_{{{{1}}}}a-b_{{12}}{h}^{2}[a]_{{{{2}}}}-b_{{13}}{h}^{2}[a]_{{{{3}}}} \right) [g]_{{{{1}}}}}{12{a}^{2}}} 
 +{\frac {b_{{13}}{h}^{2}[g]_{{{{1}}{{3}}}}}{12a}}
\\
&
 + {\frac { \left( c_{{2}}{h}^{2}a-2{h}^{2}[a]_{{{{2}}}}a-b_{{12}}{h}^{2}[a]_{{{{1}}}}-b_{{23}}{h}^{2}[a]_{{{{3}}}} \right) [g]_{{{{2}}}}}{12{a}^{2}}}
+ {\frac {b_{{23}}{h}^{2}[g]_{{{{2}}{{3}}}}}{12a}}
\\
&
+ {\frac { \left( c_{{3}}{h}^{2}a-2{h}^{2}[a]_{{{{3}}}}a-b_{{13}}{h}^{2}[a]_{{{{1}}}}-b_{{23}}{h}^{2}[a]_{{{{2}}}} \right) [g]_{{{{3}}}}}{12{a}^{2}}}
 + \frac{{h}^{2}[g]_{{{{{1}}}{{{1}}}}}}{12}
%\numberthis \label{three_dim_generl_g}
\\
&
+{\frac {b_{{12}}{h}^{2}[g]_{{{{1}}{{2}}}}}{12a}}
+\frac{{h}^{2}[g]_{{{{{3}}}{{{3}}}}}}{12}
+\frac{{h}^{2}[g]_{{{{{2}}}{{{2}}}}}}{12}
+g .
\end{align*}
% where $\Delta x_i = h>0$ for $i=1,2,3$. 
We define $K_x(x_{n_1}^{(1)},x_{n_2}^{(2)},x_{n_3}^{(3)},\tau) =\hat{
  K}_{n_1,n_2,n_3}$ and 
$M_x(x_{n_1}^{(1)},x_{n_2}^{(2)},x_{n_3}^{(3)},\tau)  =\hat{ M}_{n_1,n_2,n_3}$
for each point $x=\bigl(x_{i_1}^{(1)},x_{i_2}^{(2)},x_{i_3}^{(3)} \bigr)\in \interior{G}^{(3)}$ and $\tau \in \Omega_{\tau}$,
where $n_j \in \{ i_j -1, i_j, i_j+1\}$ with $j=1,2,3$. 
% $n_1 \in \{ i_1 -1, i_1, i_1+1\}$, $n_2 \in \{ i_2 -1, i_2, i_2+1\}$ and  $n_3 \in \{ i_3 -1, i_3, i_3+1\}$ 
$K_x$ and $M_x$ are zero otherwise. 
Hence, the approximation only uses points of the compact stencil \eqref{Def_compact_stencil_n_D}.

\section{Fully discrete scheme}
\label{sec:hoctime}
The semi-discrete scheme presented in the previous
sections can be extended to a fully discrete scheme for the
parabolic problem \eqref{basic_general_pde_nD_without_usage_of_f}
by additionally discretising in time. Any time integrator can be implemented to solve the problem as in \cite{SpoCar01}. 
Here we consider 
% only the most common class of methods involving two time steps and, in particular, employ 
a Crank-Nicolson type time-discretisation with constant time step $\Delta\tau$ to obtain a fully
discrete scheme. Let 
\begin{align*}
A_x(\hat{x},\tau_{k+1}) = \hat{M}_x\left(\hat{x},\tau_{k}\right) +
\frac{\Delta \tau}{2}K_x \left( \hat{x},\tau_{k+1}\right),\;
B_x(\hat{x},\tau_{k}) = \hat{M}_x\left(\hat{x},\tau_{k}\right) -
\frac{\Delta \tau}{2}K_x \left( \hat{x},\tau_{k}\right),
\end{align*}
where 
$\hat{M}_x\left(\hat{x},\tau_{k}\right) =\left(M_x\left(\hat{x},\tau_{k}\right) + M_x\left(\hat{x},\tau_{k+1}\right)\right)/2$. 
$K_x\left(\hat{x},\tau\right)$ and
$M_x\left(\hat{x},\tau\right)$ are defined through a semi-discrete
finite difference scheme with fourth-order consistency using only points of
the compact stencil \eqref{Def_compact_stencil_n_D}.
Then, a fully discrete high-order compact finite difference scheme for \eqref{basic_general_pde_nD_without_usage_of_f} with $n \in \mathbb{N}$ 
on the time grid $\tau_k = k \Delta_{\tau}$ for $k=0, \ldots , N_{\tau}$ and $\Delta x_i=h$ for all $i$ 
% and $\Delta x_i = h>0$ for $i=1,\ldots , n$ 
is given at each point $x=\bigl(x_{i_1}^{(1)},\ldots , x_{i_n}^{(n)}\bigr)\in\interior{G}^{(n)}$ by
%\begin{eqnarray}
%\notag \begin{array}{rcl}
\begin{align}\label{Def_general_n_dim_HOC_scheme_for_stability}
  \sum\limits_{\hat{x}\in \hat{U}\left( x \right)}  A_x\left(\hat{x},\tau_{k+1}\right) U_{l_1,\ldots, l_n}^{k+1} 
=& 
 \sum\limits_{\hat{x}\in \hat{U}\left( x \right)}  B_x\left(\hat{x},\tau_{k}\right) U_{l_1,\ldots ,l_n}^{k} 
%\\
+  \frac{\Delta \tau}{2}\hat{g}\left(x,\tau_{k},\tau_{k+1}\right),
%\end{array}
%\end{eqnarray}
\end{align}
where $\hat{g}\left(x,\tau_{k},\tau_{k+1}\right)= \tilde{g}\left(x,\tau_k\right) + \tilde{g}\left(x,\tau_{k+1}\right)$ and $\hat{x }=\bigl(x_{l_1}^{(1)},\ldots , x_{l_n}^{(n)}\bigr)\in \hat{U}\left(x\right)$. 
This scheme is second-order consistent in time and fourth-order
consistent in space. 
We have fourth-order
consistency in terms of $h$ for $\Delta \tau \in \mathcal{O}(h^2)$ while only using the compact stencil.
Note that up to this point only the spatial interior is discussed. The applied boundary conditions may still have an effect the above numerical scheme. 

\section{Stability analysis for the Cauchy problem in dimensions
  $\bf{n=2,3}$}
\label{sec:stability}
In this section we consider the stability analysis of the high-order
compact scheme for the Cauchy problem associated with
\eqref{basic_general_pde_nD_without_usage_of_f} in the case
$n=2,3$. The coefficients of the semi-discrete scheme are given in
Section~\ref{Section_2DGeneralPDESemi_discrete} for two spatial
dimensions and in Section~\ref{Section_3DGeneralPDESemi_discrete},
when three spatial dimensions occur. Those coefficients are
non-constant, as the coefficients of the parabolic partial
differential equation \eqref{basic_general_pde_nD_without_usage_of_f}
are non-constant.

We consider a von Neumann stability analysis. Other approaches which take into account boundary
conditions like normal mode analysis \cite{GuKrOl96} are beyond the
scope of the present paper. For both $n=2$ and $n=3$, we give a proof of
stability in the case of vanishing cross derivative terms and frozen
coefficients in time and space, which means that all possible values
for the coefficients are considered, but as constants, hence the
derivatives of the coefficients of the partial differential equation
appearing in the discrete schemes are set to zero. This approach has
been used as well in \cite{GuKrOl96,Strick04} and gives a necessary
stability condition, whereas slightly stronger conditions are sufficient to ensure
overall stability \cite{RiMo67}.  This approach is extensively used in
the literature and yields good criteria on the robustness of the
scheme. 
In \eqref{Def_general_n_dim_HOC_scheme_for_stability} we use
\begin{align*}
U_{j_1, \ldots , j_n}^k = g^k e^{IS_n}\quad  \text{with} \quad S_n=\sum\limits_{m=1}^nj_mz_m 
\end{align*}
for $j_m \in \{ i_m -1, i_m , i_m + 1 \}$, where $I$ is the imaginary
unit, $g^k$ is the amplitude at time level $k$ and $z_m=2\pi h
/\lambda_m$ for the wavelength $\lambda_m \in [0,2\pi[$ for
$m=1,\ldots ,n$. Then the fully discrete scheme satisfies the {\em necessary von Neumann stability condition\/} for all $z_1,z_2$, when the amplification factor $ G=g^{k+1}/g^k$ satisfies 
\begin{align} \label{general_stability_condition}
\vert G \vert^2 - 1 \leq 0 .
\end{align}

\subsection{Stability analysis for the two-dimensional case}

In this section we perform the von Neumann stability analysis for the
two-dimensional high-order compact scheme of
Section~\ref{Section_2DGeneralPDESemi_discrete}. The analysis of the
case with vanishing cross-derivative and frozen coefficients are
carried out in detail. In the case of non-vanishing mixed derivatives partial results are given for frozen coefficients.

%\begin{definition*}[\textbf{von Neumann Stability in 2D}]
%\end{definition*}

\begin{theorem}
\label{thm:stab2d}
For $a=a_1=a_2<0$, $b_{1,2}=0$ and $\Delta x_1 = \Delta x_2 = h>0$, the fully discrete high-order
compact finite difference scheme given in
\eqref{Def_general_n_dim_HOC_scheme_for_stability} with $n=2$, with
coefficients defined in
Section~\ref{Section_2DGeneralPDESemi_discrete}, satisfies (for frozen
coefficients) the necessary stability condition \eqref{general_stability_condition}.
\end{theorem}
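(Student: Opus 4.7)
The plan is a standard von Neumann/Fourier analysis: substitute the Fourier ansatz, compute the amplification factor, and reduce $|G|^{2}\le 1$ to an inequality on the Fourier symbols that can be verified by direct trigonometric manipulation. Under the frozen-coefficient assumption all derivatives of $a$ and $c_i$ in the stencils $\hat K,\hat M$ of Section~\ref{Section_2DGeneralPDESemi_discrete} vanish; together with $b_{12}\equiv 0$ this eliminates the four corner entries of $\hat M$ entirely and leaves only a Mehrstellen-type diffusion skeleton, the first-order centred convective corrections and a lower-order $c_i^{2}/a$ correction in $\hat K$. Inserting $U_{j_1,j_2}^{k}=g^{k}e^{I(j_1z_1+j_2z_2)}$ into \eqref{Def_general_n_dim_HOC_scheme_for_stability} and cancelling the spatial factor gives
\[G=\frac{M(z_1,z_2)-\tfrac{\Delta\tau}{2}K(z_1,z_2)}{M(z_1,z_2)+\tfrac{\Delta\tau}{2}K(z_1,z_2)},\]
where $M,K$ are the trigonometric symbols of $\hat M$ and $\hat K$. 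Since $|M+\tfrac{\Delta\tau}{2}K|^{2}-|M-\tfrac{\Delta\tau}{2}K|^{2}=2\Delta\tau\,\text{Re}(\overline M K)$, the condition \eqref{general_stability_condition} is equivalent to showing $\text{Re}\,M\cdot\text{Re}\,K+\text{Im}\,M\cdot\text{Im}\,K\ge 0$ for all $z_1,z_2\in[0,2\pi)$.

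Next I would compute the symbols explicitly. One obtains $\text{Re}\,M=\tfrac{2}{3}+\tfrac{1}{6}(\cos z_1+\cos z_2)\in[\tfrac{1}{3},1]$, $\text{Im}\,M=\tfrac{h}{12a}(c_1\sin z_1+c_2\sin z_2)$, $\text{Im}\,K=\tfrac{c_1}{3h}\sin z_1(2+\cos z_2)+\tfrac{c_2}{3h}\sin z_2(2+\cos z_1)$, and
\[\text{Re}\,K=\tfrac{2|a|}{3h^{2}}\bigl[5-2\cos z_1-2\cos z_2-\cos z_1\cos z_2\bigr]+\tfrac{c_1^{2}}{6|a|}(1-\cos z_1)+\tfrac{c_2^{2}}{6|a|}(1-\cos z_2).\]
The bracket in $\text{Re}\,K$ factorises as $(1-\cos z_1)(2+\cos z_2)+3(1-\cos z_2)\ge 0$ on $[-1,1]^{2}$, so $\text{Re}\,M\cdot\text{Re}\,K\ge 0$ holds trivially; the only potentially negative contribution to $\text{Re}(\overline M K)$ is the convective cross product $\text{Im}\,M\cdot\text{Im}\,K$, which carries the sign-indefinite factor $1/a<0$.

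The main obstacle is to show that this indefinite cross term is dominated by the positive contributions of $\text{Re}\,M\cdot\text{Re}\,K$. Writing $\alpha=c_1\sin z_1$ and $\beta=c_2\sin z_2$, the cross term becomes
\[\text{Im}\,M\cdot\text{Im}\,K=-\tfrac{1}{36|a|}\bigl[(2+\cos z_2)\alpha^{2}+(2+\cos z_1)\beta^{2}+(4+\cos z_1+\cos z_2)\alpha\beta\bigr],\]
a symmetric quadratic form in $(\alpha,\beta)$ whose associated matrix has non-negative trace and determinant $-(\cos z_1-\cos z_2)^{2}/4\le 0$, so its indefinite direction is concentrated around $\alpha\approx -\beta$. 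The strategy is to combine this form with (i) the $c^{2}$-part of $\text{Re}\,M\cdot\text{Re}\,K$, which is bounded below by $\tfrac{1}{18|a|}[c_1^{2}(1-\cos z_1)+c_2^{2}(1-\cos z_2)]$, and (ii) the diffusion part of $\text{Re}\,M\cdot\text{Re}\,K$, of order $|a|/h^{2}$. Using $\text{Re}\,M\ge\tfrac{1}{3}$, the identity $\sin^{2}z=(1-\cos z)(1+\cos z)$ to convert $\sin z_j$-terms to $(1-\cos z_j)$-terms, and completing the square in $(\alpha,\beta)$, the combined expression rewrites as a sum of non-negative terms, establishing $\text{Re}(\overline M K)\ge 0$. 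The calculation is lengthy but purely algebraic, and yields the claimed necessary stability condition $|G|^{2}\le 1$.
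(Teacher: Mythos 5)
Your overall framework is sound and is essentially the paper's: the identity $|M+\tfrac{\Delta\tau}{2}K|^{2}-|M-\tfrac{\Delta\tau}{2}K|^{2}=2\Delta\tau\,\mathrm{Re}(\overline{M}K)$ reduces \eqref{general_stability_condition} to $\mathrm{Re}(\overline{M}K)\ge 0$, which is exactly the paper's statement $N_G\le 0$ (with $D_G=|M+\tfrac{\Delta\tau}{2}K|^{2}$). Your symbols $\mathrm{Re}\,M$, $\mathrm{Im}\,M$ and $\mathrm{Im}\,K$ are correct. However, there is a genuine gap in the one computation that carries the whole burden of the proof.

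First, your $\mathrm{Re}\,K$ is wrong: the corner entries $\hat K_{i_1\pm1,i_2\mp1}$ contain the terms $\mp\frac{c_1c_2}{24a}$ and $\pm\frac{c_1c_2}{24a}$, which do \emph{not} vanish when $b_{12}=0$ and contribute $\frac{c_1c_2}{12a}\bigl[\cos(z_1+z_2)-\cos(z_1-z_2)\bigr]=\frac{c_1c_2}{6|a|}\sin z_1\sin z_2$ to the real part of the symbol. This missing term is precisely what cancels the indefinite $\alpha\beta$ cross term you isolate in $\mathrm{Im}\,M\cdot\mathrm{Im}\,K$: carrying the computation through with the correct symbol, the $h$-independent part of $\mathrm{Re}(\overline{M}K)$ collapses to $\frac{1}{9|a|}\bigl[c_1^{2}\eta_1^{2}(1+\xi^2-2\xi_1^{2}\xi_2^{2})+c_2^{2}\eta_2^{2}(1+\xi^2-2\xi_1^{2}\xi_2^{2})\bigr]$ (in half-angle variables), a manifestly non-negative sum with \emph{no} surviving $c_1c_2$ term --- this is the paper's $n_4\ge 0$. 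With your incomplete $\mathrm{Re}\,K$ the cancellation fails; e.g.\ for $c_1=c_2$ and $\cos z_1=\cos z_2$ near $1$ your $h^{0}$ contribution is strictly negative, and since the diffusion part is only $O(|a|/h^{2})$ it cannot dominate an $h$-independent deficit for all $h$, so your stated symbols would yield only conditional stability, contradicting the theorem.

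Second, and relatedly, your closing strategy --- dominating the convective cross term by the diffusion part of $\mathrm{Re}\,M\cdot\mathrm{Re}\,K$ --- cannot work uniformly in $h$ for the reason just given: the two contributions scale differently in $h$, so the $h^{0}$ terms must be non-negative on their own. The actual mechanism is exact cancellation at order $h^{0}$, not domination. The final ``completing the square \dots\ sum of non-negative terms'' step is asserted rather than performed, and it is exactly the step where the error would surface. Finally, you never verify that the denominator $|M+\tfrac{\Delta\tau}{2}K|^{2}$ is strictly positive for all frequencies; the paper devotes the last part of its proof to showing $D_G>0$, and this is needed for $G$ to be well defined.
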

\begin{proof}
Let $\xi_1=\cos({z_1/2})$, $\xi_2=\cos({z_2/2})$, $\eta_1=\sin(z_1/2)$ and $\eta_2=\sin({z_2/2}).$
The stability condition \eqref{general_stability_condition} for the fully
discrete scheme \eqref{Def_general_n_dim_HOC_scheme_for_stability} using the
coefficients defined in
Section~\ref{Section_2DGeneralPDESemi_discrete} yields
$\vert G \vert^2 - 1  = {N_G}/{D_G}$ (explicit expressions for $N_G$,
$D_G$ are given below).
We discuss the numerator $N_G$ and the denominator $D_G$
separately in the following.

The numerator can be written as
%$\frac{8kh^2a\left(n_4 h^2 + n_2 \right)}{d_6 h^6 + d_4h^4 + d_2 h^2 + d_0}$
$ N_G = 8ka\left(n_4 h^4 + n_2h^2 \right)$
where the polynomials
\begin{align*}
n_2 =& 8a^2 f_1\left(\xi_1,\xi_2 \right)f_2\left(\xi_1,\xi_2\right) 
\quad \text{and} \quad n_4 = f_3\left(\xi_1\right) f_4\left(\xi_1,\xi_2\right) c_1^2 + f_3\left(\xi_2\right) f_4\left(\xi_2,\xi_1\right) c_2^2 
\end{align*}
are non-negative, since
%\begin{eqnarray}
%\notag \begin{array}{rcll}
\begin{align*}
 f_1\left(x,y\right)  = & x^2+y^2+1\geq 0,&
 f_2\left(x,y\right)  = & 2 - x\Bigl(y^2 + \frac{1}{2}\Bigr)- \frac{y^2}{2} \geq 0, \\
 f_3\left(x\right) =& x^2 - 1 \leq 0,&
 f_4\left(x,y\right) =& 2x^2 y^2 - x^2 -1 \leq 0,
\end{align*}
%\end{array}
%\end{eqnarray}
for $x,y \in [-1,1]$. Hence, we observe that $N_G\leq 0$ holds, as
$\xi_1,\xi_2 \in \left[ -1,1 \right]$. 

Now we consider the denominator $D_G$, which can be written as
$$D_G = d_6 h^6 + \left(d_{4,2}k^2 + d_{4,1}k + d_{4,0} \right)h^4 + (d_{2,2}k^2 + d_{2,1}k) h^2 + d_0,$$
where 
%\begin{eqnarray}
%\notag \begin{array}{rcll}
\begin{align*}
% d_0  = & 16 a^4 k^2  \left(2x^2 y^2 + x^2  +y^2 - 4\right)^2 \geq 0,\quad
d_0  = & 16 a^4 k^2  \left(2\xi_1^2 \xi_2^2 + \xi_1^2  +\xi_2^2 - 4\right)^2 \geq 0,\quad
d_{2,1}  = 16a^3 f_1\left(\xi_1,\xi_2\right) f_5\left( \xi_1,\xi_2 \right)\geq 0 ,\\
d_{2,2}  = &4a^2\left[9\left(\xi_1\eta_1 c_1 + \xi_2\eta_2 c_2 \right)^2 
+ 2 f_3\left(\xi_1\right)f_6\left(\xi_1,\xi_2\right)c_1^2 
+ 2 f_3\left(\xi_2\right)f_6\left(\xi_2,\xi_1\right)c_2^2 \right],\\
d_{4,0}  = & 4 a^2 f_1\left( \xi_1,\xi_2\right)^2 \geq 0, \quad
%d_{4,1}  = & -4a \left[f_3(c_1) f_4(c_2,c_1) c^2 + f_3(c_2) f_4(c_1,c_2) d^2  \right]& \geq 0 \text{ iff } a\leq 0,
d_{4,1}  =  -4a n_4 \geq 0 ,\\
d_{4,2}  = & \left[f_3(\xi_1) c_1^2 - 2\eta_1 \eta_2 \xi_1 \xi_2 c_1c_2 + f_3(\xi_2)c_2^2 \right]^2  \geq 0,\quad
d_6  =  \left(\xi_1 \eta_1 c_1 + \xi_2 \eta_2 c_2\right)^2 \geq 0,
\end{align*}
%\end{array}
%\end{eqnarray}
because $a < 0$ and where
%\begin{eqnarray}
%\notag \begin{array}{rclr}
\begin{align*}
f_5\left( x,y \right)  = & 2x^2y^2 +x^2+y^2-4  \leq 0,\quad
f_6\left( x,y \right)  =  2x^2y^4 - 5 x^2 - y^2 +4
\end{align*}
%\end{array}
%\end{eqnarray}
with $x,y \in [-1,1]$. We observe that $f_6\left( x,y \right)$ changes
sign, as, for example $f_6\left( 0,0 \right)=4$ and $f_6\left( 1,0
\right)=-1$. Hence, we cannot determine the sign of $d_{2,2}$
directly. 
%If we can find some conditions on $c_1$ and $c_2$ to realise that $d_{2,2}\geq 0$, then we can achieve $D_G \geq %0$ for $a\leq 0$ and the stability condition in \eqref{general_stability_condition} would be satisfied.

If $c_1=c_2=0$, we have $d_{2,2} = 0$ and hence
$D_G\geq 0$. Since $d_{2,2}$ is
symmetric, we can say without loss of generality that $c_1 \neq 0$ in
the following. Furthermore, as both $c_1$ and $c_2$ are frozen coefficients, we set $m={c_2}/{c_1}$, which leads to
%\begin{eqnarray}
%\notag \begin{array}{rcl}
\begin{align*}
d_{2,2}  = &4a^2c_1^2[9(\xi_1 \eta_1 + \xi_2 \eta_2 m )^2  
+ 2 f_3(\xi_1)f_6(\xi_1,\xi_2) 
+ 2 f_3(\xi_2)f_6(\xi_2,\xi_1)m^2 ] =: 4 a^2 c_1^2 g(m).
\end{align*}
%\end{array}
%\end{eqnarray}
The function $g\left(m\right) $ can be rewritten as
%\begin{eqnarray}
%\notag \begin{array}{rcl}
\begin{align*}
g\left(m\right)  = & 
\eta_1^2 f_7\left(\xi_1,\xi_2 \right) m^2 
+ 18 \xi_1 \xi_2 \eta_1 \eta_2 m 
+ \eta_2^2 f_7\left(\xi_2,\xi_1\right)
\end{align*}
%\end{array}
%\end{eqnarray}
with 
$f_7\left(x,y\right)  =  4x^4y^2 -2x^2-y^2 +8 \geq -2x^2-y^2 +8 \geq 5.$
In the case $\eta_1=0$ we have $g(m)=\eta_2^2
f_7\left(\xi_2,\xi_1\right) \geq 0$ and thus $d_{2,2}\geq0$ and
$D_G\geq 0$. In the case
$\eta_1\neq 0$ we have $\eta_1^2 f_7(\xi_1 ,\xi_2)>0$, hence the function
$g\left(m\right)$ has a global minimum. This minimum is located at
% \begin{align*}
% \hat{m}  = & \frac{-9\xi_1 \xi_2 \eta_1 }{\eta_2 f_7\left(\xi_1 ,\xi_2 \right)},
% \quad \text{which leads to }
% \quad g\left(\hat{m}\right)  = 
% \frac{2 \eta_1^2 f_5\left(\xi_1,\xi_2\right)f_8}{f_7\left(\xi_1,\xi_2\right)},
% \end{align*}
\begin{align*}
\hat{m}  = & \frac{-9\xi_1 \xi_2 \eta_2 }{\eta_1 f_7\left(\xi_1 ,\xi_2 \right)},
\quad \text{which leads to }
\quad g\left(\hat{m}\right)  = 
\frac{2 \eta_1^2 f_5\left(\xi_1,\xi_2\right)f_8}{f_7\left(\xi_1,\xi_2\right)},
\end{align*}
where
$f_8  = 6\xi_1^2 \xi_2^2 + \xi_1^2 + \xi_2^2  - 2\xi_1^4 \xi_2^2 \eta_2^2 - 2\xi_1^2 \eta_1^2 \xi_2^4 - 8 \leq 0.$
Since $f_5\left(\xi_1 ,\xi_2\right)\leq 0$ we have $ g(m) \geq 0
\text{ for all } m\in \mathbb{R},$ and thus we have $D_G \geq 0$ for all cases as
$a < 0$. \\
We still need to show that $D_G>0$ for all $\xi_1,\xi_2\in [-1,1]$. It holds
$ d_0>0 $ for all $ (\xi_1,\xi_2) \in [-1,1]^2\setminus \{-1,1\}^2$
as $a<0$ and $k>0$. This leads to $D_G>0$ in these cases. For the case $(\xi_1,\xi_2) \in \{-1,1\}^2$ it holds $f_1\left(\xi_1,\xi_2\right) = 3$, which leads to $d_{4,0}=36 a^2>0$ and $D_G>0$.
Therefore, we have $D_G>0$ for all $(\xi_1,\xi_2) \in [-1,1]^2$ and
% the von Neumann stability 
condition \eqref{general_stability_condition} is satisfied.\qquad
\end{proof}

% In fact, the proof of Theorem~\ref{thm:stab2d} is a lot stronger than necessary,
% since we prove that all polynomials appearing in the numerator and denominator
% of the expression are positive, instead of just their sum.
For $b_{1,2}\neq 0$ the situation becomes much more
involved. Many additional terms appear in the expression for the
amplification factor $G$ and we face an additional degree of freedom
through $b_{1,2}$. Since we have proven condition
\eqref{general_stability_condition} holds for $b_{1,2}= 0$ it seems
reasonable to assume it also holds at least for values of $b_{1,2}$
close to zero. 
In von Neumann stability analysis it is often most difficult to guarantee that stability condition
\eqref{general_stability_condition} holds for extreme values of
$\eta_1$, $\eta_2$, $\xi_1$ and $\xi_2$. We have the following partial
result which holds in the case of frozen coefficients and non-vanishing coefficient of the mixed derivative, i.e.\ $b_{1,2}\neq 0$.
\begin{lemma}
\label{lem:genstab2d}
For $a=a_1=a_2<0$, arbitrary $b_{1,2}$ and $\Delta x_1 = \Delta x_2 = h>0$, the high-order compact scheme
% \eqref{CN_full_discrete_scheme} 
\eqref{Def_general_n_dim_HOC_scheme_for_stability}
with the coefficients for the two-dimensional case defined in Section~\ref{Section_2DGeneralPDESemi_discrete} 
satisfies (for frozen coefficients) the stability condition \eqref{general_stability_condition} at the corner points $\xi_1=\pm 1$ and $\xi_2= \pm 1$.
\end{lemma}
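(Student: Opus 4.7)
The plan is to evaluate the amplification factor $G$ directly at the four corner points $(\xi_1,\xi_2)\in\{-1,1\}^2$. In the convention $\xi_i=\cos(z_i/2)$ of Theorem~\ref{thm:stab2d}, these corners correspond to $z_1,z_2\in\{0,2\pi\}$, so the Fourier phase $e^{IS_2}$ equals $1$ at every index of the compact stencil. Substituting the ansatz $U^{k}_{j_1,j_2}=g^k e^{IS_2}$ into the homogeneous version of \eqref{Def_general_n_dim_HOC_scheme_for_stability}, all exponentials collapse to unity and the scheme reduces to the scalar recursion $g^{k+1}\Sigma_A = g^k \Sigma_B$, where
\[
\Sigma_A := \sum_{\hat x\in\hat U(x)} A_x(\hat x,\tau_{k+1}),\qquad \Sigma_B := \sum_{\hat x\in\hat U(x)} B_x(\hat x,\tau_k),
\]
so that $G=\Sigma_B/\Sigma_A$ becomes a ratio of real numbers at the corners.

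Since $A_x=\hat M_x+\tfrac{\Delta\tau}{2}K_x$ and $B_x=\hat M_x-\tfrac{\Delta\tau}{2}K_x$, the lemma reduces to proving the two stencil identities
\[
\Sigma_K(\tau):=\sum_{\hat x\in\hat U(x)} K_x(\hat x,\tau)=0,\qquad \Sigma_M(\tau):=\sum_{\hat x\in\hat U(x)} M_x(\hat x,\tau)=1
\]
under the frozen-coefficient hypothesis: together they imply $\Sigma_A=\Sigma_B=1$, hence $G=1$, and consequently $|G|^2-1=0\leq 0$. Both identities admit a transparent conceptual origin. Because the PDE \eqref{basic_general_pde_nD_without_usage_of_f} carries no zeroth-order term in $u$, any discretisation consistent with it must annihilate the constant mode once its coefficients are frozen, forcing $\Sigma_K=0$; likewise $M_x$ approximates the identity applied to $\partial_\tau u$, so $\Sigma_M=1$. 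In the frozen-coefficient regime these consistency statements hold exactly, not merely up to $\mathcal{O}(h^4)$.

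To turn this into a rigorous verification I would substitute the frozen-coefficient relations $[a]_i=[a]_{ij}=[b_{12}]_i=[c_j]_i=[c_j]_{il}=0$ into the nine explicit stencil formulas of Section~\ref{Section_2DGeneralPDESemi_discrete} and sum by algebraic type. The surviving contributions to $\hat K_{l,m}$ fall into the types $a/h^2$, $b_{12}^2/(ah^2)$, $b_{12}/h^2$, $c_i/h$, $c_ib_{12}/(ah)$, $c_i^2/a$ and $c_1c_2/a$; each type cancels across the stencil by inspection of the $\pm/\mp$ patterns. For example, the $a/h^2$ contributions give $-\tfrac{10a}{3h^2}+4\cdot\tfrac{2a}{3h^2}+4\cdot\tfrac{a}{6h^2}=0$; the four diagonal entries $\pm b_{12}/(4h^2)$ cancel pairwise under the reflection $i_1\mapsto -i_1$; and the drift contributions $\pm c_i/(3h)$ at the edges together with $\pm c_i/(12h)$ at the diagonals cancel under the same sign-flipping symmetry. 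For $\Sigma_M$, the centre weight $2/3$ combines with $4\times\tfrac{1}{12}$ from the edge neighbours (the $\pm c_i h/(24a)$ corrections cancel pairwise) and $0$ from the four diagonal weights $\pm b_{12}/(48a)$ (which also cancel pairwise), yielding $\tfrac{2}{3}+\tfrac{1}{3}+0=1$.

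The main obstacle is purely one of combinatorial bookkeeping: there are roughly forty monomial contributions spread across the five symbolic rows $\hat K_{i_1,i_2}$, $\hat K_{i_1\pm1,i_2}$, $\hat K_{i_1,i_2\pm1}$, $\hat K_{i_1\pm1,i_2-1}$ and $\hat K_{i_1\pm1,i_2+1}$, and the $\pm/\mp$ sign convention must be tracked line by line to confirm type-wise cancellation. Once the two identities are in place, the stability inequality is attained with equality at every corner, independently of the sign of $a$ or the magnitude of $b_{12}$; this partial result thus confirms that the scheme preserves the constant mode in the presence of a non-vanishing cross-derivative coefficient, which is the extreme regime in $(\xi_1,\xi_2)$ missed by the main body of the analysis.
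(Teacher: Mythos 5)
Your proposal is correct and follows essentially the same route as the paper, which simply substitutes $\eta_1=\eta_2=0$ at the corners and observes by direct computation that $\vert G\vert^2-1=0$. Your version makes explicit the mechanism behind that computation — at $\xi_1,\xi_2=\pm1$ the Fourier phases over the compact stencil all collapse to $1$, so $G$ reduces to the ratio of stencil row sums, and the frozen-coefficient identities $\Sigma_K=0$, $\Sigma_M=1$ (which do check out against the coefficients of Section~\ref{Section_2DGeneralPDESemi_discrete}) give $G=1$ exactly — which is a welcome elaboration of what the paper calls ``straight-forward computation''.
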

\begin{proof}
Using $\eta_1 =\sin\left({z_1}/{2} \right) = \sqrt{1 -  \xi_1^2}=0$
for $\xi_1= \pm 1$ and $\eta_2=\sin\left({z_2}/{2} \right) = \sqrt{1 -
  \xi_2^2}=0$ for $\xi_2= \pm 1$, straight-forward computation shows
that on each corner point 
$\vert G\vert^2 - 1 =0$. Hence, condition
\eqref{general_stability_condition} holds.\qquad
\end{proof}

It is worth mentioning that in a comparable situation in
\cite{DuFo12p} (where a specific partial differential equation of type
\eqref{basic_general_pde_nD_without_usage_of_f} is considered) an additional
numerical evaluation of condition \eqref{general_stability_condition} revealed
it to hold also for non-vanishing mixed derivatives with $\left(\xi_1^2,\xi_2^2\right) \neq \left( 1,1\right)$. However, the 
left hand side of \eqref{general_stability_condition} was very close
to zero, and although the inequality was always satisfied, this left little room for analytical
estimates. This leads to the conjecture that the stability condition in
that case was satisfied also for
general parameters, although it would be hard to prove
analytically. Lemma~\ref{lem:genstab2d} above suggests the present case is similar.
We remark that in our numerical experiments we observe
a stable behaviour throughout, also for general choice of parameters. 

\subsection{Stability analysis for the three-dimensional case}
In this section we analyse the stability of the high-order compact
scheme with coefficients given in
Section~\ref{Section_3DGeneralPDESemi_discrete} in three space dimensions. 
We first perform a thorough von Neumann stability analysis in the case of vanishing cross derivative terms for frozen coefficients. 
We observe no additional stability condition in this case. 
Then we give partial results in the case of non-vanishing cross-derivative terms for frozen coefficients.
\begin{theorem}
For $a_i=a<0$, $b_{i,j}=0$ and $\Delta x_i =h>0$ for $i,j\in \{1,2,3\}$, $i\neq j$, the fully
discrete high-order
compact scheme given in
\eqref{Def_general_n_dim_HOC_scheme_for_stability} with $n=3$, with
coefficients given in Section~\ref{Section_3DGeneralPDESemi_discrete}, satisfies (for frozen coefficients) the necessary stability condition \eqref{general_stability_condition}.
\end{theorem}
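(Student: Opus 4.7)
The plan is to mirror the proof of Theorem~\ref{thm:stab2d} step by step, adapting each part to three spatial dimensions. First, I would insert the von Neumann ansatz $U_{j_1,j_2,j_3}^k = g^k \exp\bigl(I\sum_{m=1}^3 j_m z_m\bigr)$ into the fully discrete scheme \eqref{Def_general_n_dim_HOC_scheme_for_stability} with $n=3$, using the coefficients $\hat K_{k,l,m}$ and $\hat M_{k,l,m}$ of Section~\ref{Section_3DGeneralPDESemi_discrete}. Under the frozen-coefficient assumption every first and second derivative of $a, b_{ij}, c_i$ is set to zero, and the additional hypothesis $b_{ij}=0$ annihilates all cross-derivative stencil entries, so only a small number of surviving terms remain in both $\hat K$ and $\hat M$. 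Introducing the notation $\xi_m=\cos(z_m/2)$ and $\eta_m=\sin(z_m/2)$ for $m=1,2,3$, I would collect the amplification factor $G$ and write
\[
|G|^2-1 \;=\; \frac{N_G}{D_G},
\]
as a rational expression in $\xi_1,\xi_2,\xi_3,\eta_1,\eta_2,\eta_3$ and the parameters $a$, $c_1,c_2,c_3$, $k$, $h$.

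Next, I would study the numerator $N_G$ and show $N_G\leq 0$, which yields the inequality once $D_G>0$ is verified. By analogy with the 2D factorisation $N_G=8ka\bigl(n_4h^4+n_2h^2\bigr)$, I expect a representation of the form $N_G=8ka\bigl(\tilde n_4 h^4+\tilde n_2 h^2\bigr)$ where $\tilde n_2$ factors as a product $\tilde f_1(\xi_1,\xi_2,\xi_3)\,\tilde f_2(\xi_1,\xi_2,\xi_3)$ of a manifestly non-negative polynomial (the three-variable analogue of $f_1$, essentially $\xi_1^2+\xi_2^2+\xi_3^2+1$) and a second non-negative polynomial built from the symmetric combinations appearing in the $h^{-2}$ entries of $\hat K_{i_1,i_2,i_3}$, and where $\tilde n_4$ is a symmetric sum over $m\in\{1,2,3\}$ of terms $f_3(\xi_m)\,f_4(\xi_m,\xi_p,\xi_q)\,c_m^2$ with $f_3(x)=x^2-1\leq 0$ and $f_4$ a non-positive polynomial on $[-1,1]^3$. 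The sign of each factor would then be checked by elementary calculus on $[-1,1]$; since $a<0$, $k>0$ and every surviving factor has a definite sign, we conclude $N_G\leq 0$.

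For the denominator I would collect $D_G$ into powers of $h$ and $k$ as in the 2D proof, obtaining an expansion
\[
D_G \;=\; d_6 h^6 + (d_{4,2}k^2 + d_{4,1}k + d_{4,0})h^4 + (d_{2,2}k^2 + d_{2,1}k)h^2 + d_0,
\]
with coefficients that, because $b_{ij}=0$, simplify drastically compared to the 2D case. Each coefficient should reduce either to a sum of squares or to a manifestly non-negative symmetric polynomial times $a^{2\ell}$; in particular I expect $d_0\propto a^6 k^2\bigl(2\xi_1^2\xi_2^2\xi_3^2+\xi_1^2+\xi_2^2+\xi_3^2-\text{const}\bigr)^2$ and $d_{4,0}\propto a^4 (\xi_1^2+\xi_2^2+\xi_3^2+1)^2$. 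Strict positivity $D_G>0$ away from the corners follows from $d_0>0$, and at the corners $\{\xi_1,\xi_2,\xi_3\}\in\{-1,1\}^3$ from $d_{4,0}>0$, exactly as in the 2D proof. Combining $N_G\leq 0$ with $D_G>0$ gives \eqref{general_stability_condition}.

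The main obstacle is the algebraic bookkeeping: the 3D stencil has 27 nodes and although $b_{ij}=0$ kills many contributions, the amplification factor must still be expanded carefully and grouped in a symmetric fashion before the sign-definite factorisations become visible. The delicate step is identifying the three-variable analogues $\tilde f_1,\tilde f_2,f_4$ of the 2D polynomials $f_1,f_2,f_4$ and verifying their sign on $[-1,1]^3$; here I expect the symmetry under permutations of $(1,2,3)$ together with the additive structure inherited from the vanishing of mixed derivatives to be the key, reducing the problem to checking one-variable factors and a single symmetric cubic inequality, rather than a genuinely multivariate optimisation.
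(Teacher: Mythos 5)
Your overall strategy --- von Neumann ansatz, splitting $\vert G\vert^2-1=N_G/D_G$, and sign analysis of the polynomial coefficients in powers of $h$ and $k$ --- is exactly the route the paper takes, and your treatment of the numerator is essentially right in outline (though $n_4$ is not a purely diagonal form in $c_1^2,c_2^2,c_3^2$: it also contains cross terms $c_ic_j$, which the paper controls by packaging them into negative multiples of perfect squares such as $-\eta_3^2(\xi_1\eta_1c_1+\xi_2\eta_2c_2)^2$).

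The genuine gap is in the denominator. You assert that, because $b_{ij}=0$, ``each coefficient should reduce either to a sum of squares or to a manifestly non-negative symmetric polynomial,'' and you close by claiming the whole argument avoids ``a genuinely multivariate optimisation.'' That is where the plan fails: the coefficient $d_{2,2}$ of $k^2h^2$ is \emph{not} manifestly sign-definite (already in the 2D proof of Theorem~\ref{thm:stab2d} the analogous coefficient required a minimisation over $m=c_2/c_1$, because the auxiliary polynomial $f_6$ changes sign on $[-1,1]^2$). In three dimensions the paper must treat three cases ($\xi_2^2=\xi_3^2=1$; all $c_i=0$; the generic case), and in the generic case it normalises $p_2=c_2/c_1$, $p_3=c_3/c_1$ and carries out a genuine two-variable quadratic minimisation: solving $\nabla g(\hat p_2,\hat p_3)=0$, verifying that the critical point is a minimum and that the discriminant $k_{12}^2-4k_{11}^2k_{22}^2=q_1q_2q_3$ does not vanish off the degenerate set, and then establishing $g(\hat p_2,\hat p_3)=q_4q_5/q_6\ge 0$ through sign bounds on three large symmetric polynomials $q_4,q_5,q_6$ on $[-1,1]^3$. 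This is the technical heart of the proof and is precisely the step your proposal declares unnecessary, so as written the plan would not go through without supplying this analysis.
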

\begin{proof}
Let $\xi_i=\cos({z_i/2})$ and $\eta_i=\sin(z_i/2)$ for $i=1,2,3$.
The stability condition
\eqref{general_stability_condition} yields
$\vert G \vert^2 - 1 ={N_G}/{D_G}$ (explicit expressions for $N_G$,
$D_G$ are given below).
% We discuss the numerator $N_G$ and the denominator $D_G$ separately in the following.

For the numerator we have $N_G = - 8ak\left(n_4h^4 + n_2 h^2\right)
\leq 0,$ since $a <0$ and the polynomials
\begin{align*}
n_2  = &4a^2 f_1\left(\xi_1,\xi_2,\xi_3\right) 
\left[f_2\left(\xi_1,\xi_2\right) 
+ f_2\left(\xi_3,\xi_1\right) 
+ f_2\left(\xi_2 ,\xi_3\right) \right] \leq 0 ,\\
n_4  = & 
   \left[ f_3\left( \xi_1 , \xi_2 \right) + f_3\left( \xi_1 , \xi_3 \right)\right]  c_1^2
 + \left[ f_3\left( \xi_2 , \xi_1 \right) + f_3\left( \xi_2 , \xi_3 \right)\right]  c_2^2
 + \left[ f_3\left( \xi_3 , \xi_1 \right) + f_3\left( \xi_3 , \xi_2 \right)\right]  c_3^2  \\
& 
- \eta_3^2 \left( \xi_1 \eta_1  c_1 + \xi_2 \eta_2  c_2 \right)^2
- \eta_2^2 \left( \xi_1 \eta_1  c_1 + \xi_3 \eta_3  c_3 \right)^2 
- \eta_1^2 \left( \xi_2 \eta_2  c_2 + \xi_3 \eta_3  c_3 \right)^2\leq 0 ,
\end{align*}
are non-negative since
\begin{align*}
f_1\left(x,y\right)  = & x^2 +y^2 + z^2  \geq 0,\qquad
f_2\left(x,y\right)  =  2x^2y^2 - x^2 -1  \leq 0,\\
f_3\left( x,y\right) = & x^2y^2\left( 1-x^2\right) + y^2 \left(x^2 - 1 \right) \leq y^2\left( 1-x^2\right) + y^2 \left(x^2 - 1 \right) = 0,
\end{align*}
for $x,y,z \in \left[-1,1\right]$. 

The denominator $D_G$ can be written as
\begin{eqnarray}
\notag D_G = d_6h^6 + \left(d_{4,2}k^2 + d_{4,1}k+ d_{4,0}\right)h^4 +  \left(d_{2,2}k^2 + d_{2,1}k \right)h^2 + d_0,
\end{eqnarray}
where
%\begin{eqnarray}
%\notag \begin{array}{rcl}
\begin{align*}
d_0  = & 16 a^4 k^2 \left[
  m_1( \xi_1 ,\xi_2 ) 
+ m_1( \xi_3 ,\xi_1 ) 
+ m_1( \xi_2 ,\xi_3 ) 
\right]^2 \geq 0 , \quad
d_{2,1}  =  
 4an_2 \geq 0 ,
\\
d_{2,2}  = 
&  4a^2\left[  
  m_6\left( \xi_1 , \eta_1 , \xi_2 \right) c_1^2
+ 2m_7\left( \xi_3 \right) \xi_1 \xi_2 \eta_1 \eta_2 c_1 c_2
+ m_6\left( \xi_2 , \eta_2 , \xi_1 \right) c_2^2
\right.
\\
&
+ m_6\left( \xi_1 , \eta_1 , \xi_3 \right) c_1^2
+ 2m_7\left( \xi_2 \right) \xi_1 \xi_3 \eta_1 \eta_3 c_1 c_3
+ m_6\left( \xi_3 , \eta_3 , \xi_1 \right) c_3^2
\\
&
+ m_6\left( \xi_2 , \eta_2 , \xi_3 \right) c_2^2
+ 2m_7\left( \xi_1 \right) \xi_2 \xi_3 \eta_2 \eta_3 c_2 c_3
+ m_6\left( \xi_3 , \eta_3 , \xi_2 \right) c_3^2
\\
&
\left.  
+ m_5\left( \eta_1 , \xi_2 , \xi_3 \right) c_1^2
+ m_5\left( \eta_2 , \xi_1 , \xi_3 \right) c_2^2
+ m_5\left( \eta_3 , \xi_1 , \xi_2 \right) c_3^2
\right]
\\
d_{4,0}  = & 4 a^2 m_2\left( \xi_1 , \xi_2 , \xi_3 \right)^2 \geq 0,\;
d_{4,1}  =  4an_4 \geq 0 ,\;
d_6  = 
\left[
  \xi_1 \eta_1 c_1
+ \xi_2 \eta_2 c_2
+ \xi_3 \eta_3 c_3
\right]^2 \geq 0 ,
\\
d_{4,2}  = & \left[
  \eta_1^2 c_1^2
+ \eta_2^2 c_2^2
+ \eta_3^2 c_3^2
+ 2 \xi_1 \eta_1 \xi_2 \eta_2 c_1 c_2
+ 2 \xi_1 \eta_1 \xi_3 \eta_3 c_1 c_3 
+ 2 \xi_2 \eta_2 \xi_3 \eta_3 c_2 c_3 
\right]^2 \geq 0,
\end{align*}
since $a<0$ and
\begin{align*}
m_1\left(x,y\right)  = & 2x^2y^2 - x^2 -1 \leq x^2 - 1 \leq 0 ,\quad
m_2\left( x,y,z \right)  =  x^2 + y^2 + z^2  \geq 0 ,\\
m_3\left( x,y\right) = & x^2y^2\left( 1-x^2\right) + y^2 \left(x^2 - 1 \right) \leq y^2\left( 1-x^2\right) + y^2 \left(x^2 - 1 \right) = 0 ,\\
m_4\left(x,y\right)   =&(1 - x^2 ) [x^2(y^2-1)  +y^2(x^2-1)] \leq 0 , \\
m_5 \left(x,y,z\right) = &-8x^4 y^2z^2 + 4x^2 y^2z^2  + 4x^2 \geq  -8x^2 y^2z^2 + 4x^2 y^2z^2 + 4x^2 \\
 =& - 4x^2 y^2z^2  + 4x^2 \geq  - 4x^2  + 4x^2 =0, \\
m_6\left(x_1,x_2,y\right)  = & 4x_2^2 x_1^2y^4 + (-8x_2^2x_1^2 +
2x_2^2 )y^2 + x_2^2 + \frac{3}{2}x_1^2x_2^2 \in [ 0,3],\\
m_7\left( x\right) = &2x^2(x^2 - (1 - x^2)) + 7 \geq 0,
\end{align*}
for $x,y,z \in [-1,1]$. We still need to show $d_{2,2}\geq 0$. Since
we cannot determine the sign of $d_{2,2}$ directly, we consider three
different cases.

Having $\xi_2^2= \xi_3^2=1$ leads to
\begin{align*}
d_{2,2}  = &  4a^2 \left[2\bigl(-2.5\xi_1^2 \eta_1^2 + 3 \eta_1^2 \bigr) c_1^2+ \left(- 8 \eta_1^4 + 8 \eta_1^2 \right) c_1^2\right]
% \geq  4a^2 \left[2\left(-\frac{5}{2}\eta_1^2 + 3 \eta_1^2\right) c_1^2 + \left(- 8 \eta_1^2 + 8  \eta_1^2 \right) c_1^2\right]
% = 4a^2 c_1^2 \eta_1^2 \geq 0,
\geq 0
\end{align*}
as $\xi_1^2 \leq 1$ and $\eta_1^2\leq 1$. \\
Secondly, we consider $ c_1 = c_2 = c_3 =0$. This leads directly to $d_{2,2}=0$. \\
From now on we have $\left(c_1 ,c_2,c_3\right) \neq \left(0,0,0\right)$. Since $d_{2,2}$ is symmetric with respect to $c_1,c_2,c_3$, we assume without loss of generality $c_1 \neq 0$. Additionally, we have $\left(\xi_2^2, \xi_3^2 \right) \neq \big(1,1\big)$. Setting $p_2:= c_2/c_1$ and $p_3:= c_3/c_1$ gives
%\begin{eqnarray}
%\notag \begin{array}{rcl}
\begin{align*}
d_{2,2}  = 
&  4a^2c_1^2 \left[  
  m_6\left( \xi_1 , \eta_1 , \xi_2 \right) 
+ 2m_7\left( \xi_3 \right) \xi_1 \xi_2 \eta_1 \eta_2  p_2
+ m_6\left( \xi_2 , \eta_2 , \xi_1 \right) p_2^2
\right.
\\
&
+ m_6\left( \xi_1 , \eta_1 , \xi_3 \right) 
+ 2m_7\left( \xi_2 \right) \xi_1 \xi_3 \eta_1 \eta_3 p_3
+ m_6\left( \xi_3 , \eta_3 , \xi_1 \right) p_3^2
\\
&
+ m_6\left( \xi_2 , \eta_2 , \xi_3 \right) p_2^2
+ 2m_7\left( \xi_1 \right) \xi_2 \xi_3 \eta_2 \eta_3 p_2 p_3
+ m_6\left( \xi_3 , \eta_3 , \xi_2 \right) p_3^2
\\
&
\left.  
+ m_5\left( \eta_1 , \xi_2 , \xi_3 \right) 
+ m_5\left( \eta_2 , \xi_1 , \xi_3 \right) p_2^2
+ m_5\left( \eta_3 , \xi_1 , \xi_2 \right) p_3^2
\right]
\\
=:& 4a^2c_1^2\left[k_{11}p_2^2 + k_{22} p_3^2 + k_{12}p_2p_3 + k_{1} p_2 + k_2 p_3 + k_0 \right]=:4a^2c_1^2 g\left(p_2,p_3\right).
\end{align*}
To calculate the extremum  of $g\left(p_2,p_3\right)$,
\begin{align*}
 \nabla g\left(\hat{p}_2,\hat{p}_3\right)  =&
\begin{pmatrix}
 2k_{11}\hat{p}_2 + k_{12} \hat{p}_3 + k_1\\
 k_{12} \hat{p}_2 + 2k_{22} \hat{p}_3 +k_2
 \end{pmatrix}
  =  
\begin{pmatrix}
 0\\
 0
\end{pmatrix}
\end{align*}
%\end{array}
%\end{eqnarray}
is necessary, which leads to
%\begin{eqnarray}
%\notag \begin{array}{rclrcl}
\begin{align*}
\hat{p}_2 =& \frac{2k_1 k_{22} - k_2k_{12}}{k_{12}^2 - 4k_{11}^2k_{22}^2},  
\quad 
\hat{p}_3 = \frac{2k_2 k_{11} - k_1k_{12}}{k_{12}^2 - 4k_{11}^2k_{22}^2},\quad
\text{where } k_{12}^2 - 4k_{11}^2k_{22}^2 = q_1q_2q_3
\end{align*}
with
\begin{align*}
q_1 = &{{ \eta_2}}^{2}{{ \eta_3}}^{2},\quad
q_2= -2\,{{ \xi_1}}^{2}{{ \xi_2}}^{
2}-2\,{{ \xi_1}}^{2}{{\xi_3}}^{2}-2\,{{ \xi_2}}^{2}{{ \xi_3}}^{2}
+{{ \xi_1}}^{2}+{{ \xi_2}}^{2}+{{\xi_3}}^{2}+3 \in [0,4],\\
q_3=&
 8\,
{{\xi_1}}^{4}{{\xi_2}}^{2}{{\xi_3}}^{2}+4\,{{\xi_1}}^{2}{{
\xi_2}}^{4}{{\xi_3}}^{2}+4\,{{\xi_1}}^{2}{{\xi_2}}^{2}{{\xi_3}}^
{4}+4\,{{\xi_2}}^{4}{{\xi_3}}^{4}-4\,{{\xi_1}}^{4}{{\xi_2}}^{2
}
\\
&
-4\,{{\xi_1}}^{4}{{\xi_3}}^{2}-22\,{{\xi_1}}^{2}{{\xi_2}}^{2}
{{\xi_3}}^{2}-6\,{{\xi_2}}^{4}{{\xi_3}}^{2}-6\,{{\xi_2}}^{2}{{
\xi_3}}^{4}+8\,{{\xi_1}}^{2}{{\xi_2}}^{2}
\\
&
+8\,{{\xi_1}}^{2}{{
\xi_3}}^{2}+20\,{{\xi_2}}^{2}{{\xi_3}}^{2}-2\,{{\xi_1}}^{2}-3
\,{{\xi_2}}^{2}-3\,{{\xi_3}}^{2}-6  \in [-9,0] .
\end{align*}
It holds $q_1q_2q_3\neq 0$ for $\left(\xi_2^2, \xi_3^2\right) \neq (1,1)$. Since this is the unique root of $\nabla g$,  as $k_{11},k_{22}\geq 0$, we have a minimum at $(p_2,p_3) = (\hat{p}_2,\hat{p}_3)$. We obtain
$g\left(\hat{p}_2,\hat{p}_3\right)= {q_4 q_5 }/{q_6 },$
where
\begin{align*}
q_4  = & 2 \eta_1^2
\left( 
2\xi_1^2 \xi_2^2 
+ 2 \xi_1^2 \xi_3^2 
+ 2 \xi_2^2 \xi_3^2 
- \xi_1^2 
- \xi_2^2 
- \xi_3^2 
-3\right) \leq  
2 \eta_1^2 \left(
\xi_1^2 
+ \xi_2^2 
+ \xi_3^2 
-3 
\right)\leq 0
\\
q_5  =  & 
  8 \xi_1^4 \xi_2^4 \xi_3^2
+ 8 \xi_1^4 \xi_2^2 \xi_3^4
+ 8 \xi_1^2 \xi_2^4 \xi_3^4
- 4 \xi_1^4 \xi_2^4
- 20 \xi_1^4 \xi_2^2 \xi_3^2
- 4 \xi_1^4 \xi_3^4 
- 20 \xi_1^2 \xi_2^4 \xi_3^2
-20 \xi_1^2 \xi_2^2 \xi_3^4
\\
&
- 4 \xi_2^4 \xi_3^4
+ 6 \xi_2^2 \xi_1^4
+ 6 \xi_1^4 \xi_3^2
+ 6 \xi_1^2 \xi_2^4
+ 57 \xi_1^2 \xi_2^2 \xi_3^2
+ 6 \xi_1^2 \xi_3^4
+ 6 \xi_2^4 \xi_3^2
+ 6 \xi_2^2 \xi_3^4
\\
&
- 20 \xi_2^2 \xi_1^2
- 20 \xi_1^2 \xi_3^2
- 20 \xi_2^2 \xi_3^2
+ 3 \xi_1^2
+ 3 \xi_2^2
+ 3 \xi_3^2
+6 \in \left[ 0 , 9 \right],
\\
q_6  = &  
8 \xi_1^4 \xi_2^2 \xi_3^2
+ 4 \xi_1^2 \xi_2^4 \xi_3^2
+ 4 \xi_1^2 \xi_2^2 \xi_3^4
+ 4 \xi_2^4 \xi_3^4
- 4 \xi_2^2 \xi_1^4
- 4 \xi_1^4 \xi_3^2
- 22 \xi_1^2 \xi_2^2 \xi_3^2
\\
&
- 6 \xi_2^4 \xi_3^2
- 6 \xi_2^2 \xi_3^4
+ 8 \xi_2^2 \xi_1^2
+ 8 \xi_1^2 \xi_3^2
+ 20 \xi_2^2 \xi_3^2
- 2 \xi_1^2
- 3 \xi_2^2
- 3 \xi_3^2
- 6 \in \left[-9,0\right],
\end{align*}
with $q_6 \neq 0$ for $\left(\xi_2^2,\xi_3^2\right) \neq
(1,1)$. Hence, in all three cases we conclude $d_{2,2} \geq 0$, and
$D_G \geq 0$ holds. 

We still need to show that $D_G>0$ for all $\xi_1,\xi_2,\xi_3\in [-1,1]$. It holds 
$ d_0>0 $ for all $(\xi_1,\xi_2,\xi_3) \in [-1,1]^3\setminus \{-1,1\}^3$
as $a<0$ and $k>0$. This leads to $D_G>0$ in these cases. For the case $(\xi_1,\xi_2,\xi_3) \in \{-1,1\}^3$ we have $m_2\left(\xi_1,\xi_2,\xi_3\right) = 3$, which leads to $d_{4,0}=36 a^2>0$ 
and $D_G>0$.
Therefore, $D_G>0$ holds for all $(\xi_1,\xi_2,\xi_3) \in [-1,1]^3$ and condition \eqref{general_stability_condition} is satisfied.
\end{proof}

For the more general case with non-vanishing cross-derivatives we have the following
result. The comments made in the previous section also apply here.
\begin{lemma}
For $a_i=a<0$, $\Delta x_i =  h>0$ for $i=1,2,3$ and arbitrary $b_{1,2}$, $b_{1,3}$ and $b_{2,3}$, the high-order compact scheme
\eqref{Def_general_n_dim_HOC_scheme_for_stability} with the
coefficients for the three-dimensional case defined in
Section~\ref{Section_3DGeneralPDESemi_discrete} satisfies (for frozen coefficients) the stability condition \eqref{general_stability_condition} 
at the corner points $\xi_1 =\pm 1$, $\xi_2 = \pm 1$ and $\xi_3 = \pm 1$.
\end{lemma}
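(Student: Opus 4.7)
The plan is to mirror the proof of Lemma~\ref{lem:genstab2d} given for the two-dimensional case. The key observation is that at any corner point the conditions $\xi_i = \pm 1$ force $\eta_i = \sqrt{1-\xi_i^2} = 0$ for $i=1,2,3$, and hence $z_i \in \{0, 2\pi\}$. Consequently $e^{I j z_i} = 1$ for every integer $j$, so every grid point of the compact stencil $\hat{U}(x)$ carries the same Fourier phase factor. Substituting $U^k_{j_1, j_2, j_3} = g^k e^{I S_3}$ into \eqref{Def_general_n_dim_HOC_scheme_for_stability} therefore collapses all Fourier-weighted sums to plain arithmetic sums over the 27 stencil coefficients.

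Writing $A_x = \hat{M}_x + (\Delta\tau/2) K_x$ and $B_x = \hat{M}_x - (\Delta\tau/2) K_x$, the amplification factor reduces to $G = \tilde{B}/\tilde{A}$ with $\tilde{A} = \sum \hat{M}_x + (\Delta\tau/2) \sum K_x$ and $\tilde{B} = \sum \hat{M}_x - (\Delta\tau/2) \sum K_x$, where the sums run over all 27 stencil points. It thus suffices to verify two identities for the frozen-coefficient coefficients given in Section~\ref{Section_3DGeneralPDESemi_discrete}: $\sum \hat{M}_x = 1$ and $\sum K_x = 0$. The first follows by inspection: the centre contributes $1/2$, each of the six axial neighbours reduces to $1/12$ under frozen coefficients (their $\pm$ corrections cancel in pairs), the twelve edge-type coefficients $\pm b_{ij}/(48a)$ cancel in $\pm$ pairs, and the eight corner coefficients vanish identically. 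The second identity expresses that the semi-discrete stencil, a consistent approximation of the operator $a\Delta u + \sum_{i<j} b_{ij} u_{x_i x_j} + \sum_i c_i u_{x_i}$, annihilates the constant function; this forces $\sum K_x = 0$ regardless of the values of $b_{1,2}, b_{1,3}, b_{2,3}$.

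Combining, we obtain $\tilde{A} = 1 = \tilde{B}$, so $G = 1$ and $|G|^2 - 1 = 0$ at every corner point, verifying \eqref{general_stability_condition} with equality.

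The main obstacle is purely bookkeeping: a direct, term-by-term verification of $\sum K_x = 0$ from the 27 explicit appendix coefficients requires collecting and cancelling many contributions of the type $a/h^2$, $b_{ij}^2/(ah^2)$, $b_{ij} c_k/(ah)$, and $c_k^2/a$. The systematic antisymmetry of the off-centre coefficients under $\pm$-sign flips makes the cancellations routine but tedious; the abstract consistency argument sidesteps this computation entirely, which is the reason the parallel two-dimensional statement can be dispatched by the authors as a straightforward computation.
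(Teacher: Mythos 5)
Your proposal is correct and follows essentially the same route as the paper: at the corner points all $\eta_i=0$, every stencil point carries the trivial phase factor, and the amplification factor collapses to the ratio of plain coefficient sums, which the paper dispatches as a ``straight-forward computation'' yielding $\vert G\vert^2-1=0$. Your only addition is to replace that unstated computation with the clean observations that $\sum \hat{M}_x=1$ (by the $\pm$-pairing of the axial and edge terms under frozen coefficients) and $\sum K_x=0$ (because the discretisation of an operator with no zeroth-order term annihilates constants exactly), which is a valid and tidier justification of the same conclusion $G=1$.
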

\begin{proof}
Using $\sin\left(z_1/2 \right) = \sqrt{1 - \xi_1^2}=0$ for $\xi_1 =
\pm 1$, $\sin\left(z_2/2 \right) = \sqrt{1 - \xi_2^2}=0$ for $\xi_2 =
\pm 1$ and $\sin\left(z_3/2 \right) = \sqrt{1 - \xi_3^2}=0$ for $\xi_3
= \pm 1$, straight-forward computation yields just as in the two-dimensional spatial setting to
$\vert G\vert^2 - 1  =0$ for all corner points. Hence,
condition \eqref{general_stability_condition} is satisfied.
\end{proof}

\section{Application to Black-Scholes Basket options}
\label{Section_Application}
To illustrate the practicality of the proposed scheme we now consider
the $n$-dimensional Black-Scholes option pricing PDE (see, e.g.\
\cite{Wil98}). In the option pricing problem mixed derivatives appear
naturally from correlation of the underlying assets.
After transformations, the conditions
\eqref{restriction_general_pde_nD_number_one} are satisfied, and we give the coefficients of the resulting
scheme. Then we discuss the boundary conditions as well as the time
discretisation.

\subsection{Transformation of the $n$-dimensional Black-Scholes
  equation}
\label{section_transformation_of_the_n_dim_BLS_eq}
In the multidimensional Black Scholes model the asset prices follow a geometric Brownian motion,
\begin{eqnarray}\label{sde_for_stock}
 dS_i(t) = (\mu_i - \delta_i) S_i(t) dt +\sigma_i S_i(t) dW_i(t) ,
\end{eqnarray}
where $S_i$ is the $i$-th underlying asset which has an expected
return of $\mu_i$, a continuous dividend of $\delta_i$, and the volatility $\sigma_i$ for $i=1, \ldots , n$ and $n\in \mathbb{N}$. The Wiener processes are correlated with $\langle {\it d}W_i,{\it d}W_j \rangle=: \rho_{i,j}dt$ for $i ,j=1, \ldots , n$ with $i \neq j$. Application of It$\hat{\text{o}}$'s lemma and standard arbitrage arguments show that any option price $V(S,\sigma,t)$ solves the $n$-dimensional Black-Scholes partial differential equation,
\begin{align}\label{basicpdemutliblsBasket}
\frac{\partial V}{\partial t}+
\frac{1}{2}\sum\limits_{i=1}^{n}\sigma_i^2 S_i^2 \frac{\partial^2
  V}{\partial S_i^2} + \sum\limits_{\substack{i,j=1\\
    i<j}}^{n}\rho_{ij}\sigma_i \sigma_j S_i S_j \frac{\partial^2
  V}{\partial S_i \partial S_j} + \sum\limits_{i=1}^n \eta_i S_i \frac{\partial V}{\partial S_i} -rV  =&0,
\end{align}
where $\eta_i=r-\delta_i.$
The transformations
\begin{align}\label{transformation_BLS_Basket_pde}
 x_i =& {\gamma}\ln\left({S_i}/{K}\right)/{\sigma_i}, \quad \tau  =  T-t \quad \text{ and } \quad u=e^{r\tau}{V}/{K},
\end{align}
for $i=1, \ldots , n$, where $\gamma$ is a constant scaling parameter to assure that the resulting computational domain does not get too large, 
leads to
\begin{align}\label{usedpdemutliblsBasket}
u_{\tau} -\frac{\gamma^2}{2}\sum\limits_{i=1}^n \frac{\partial^2 u}{\partial x_i^2} - \gamma^2 \sum\limits_{\substack{i,j=1\\i<j}}^n \rho_{ij} \frac{\partial^2 u}{\partial x_i \partial x_j} + \gamma \sum\limits_{i=1}^n \varsigma_i\frac{\partial u}{\partial x_i} = & 0,
\end{align}
where $\varsigma_i={\sigma_i}/{2}-\eta_i/{\sigma_i}.$
Comparing this equation with \eqref{basic_general_pde_nD_without_usage_of_f}, we identify
\begin{equation}\label{pde_coefficients_n_dim_bls}
a_i= -\frac{\gamma^2}{2}, \quad b_{ij}= -\gamma^2 \rho_{ij},  \quad
 c_i=\gamma \varsigma_i , \quad g= 0, 
\end{equation}
for $i,j = 1, \ldots ,n$ and $i<j$. We find that the transformed
partial differential equation \eqref{usedpdemutliblsBasket} with these
coefficients satisfies the conditions given by
\eqref{restriction_general_pde_nD_number_one}, if $\Delta x_i=h$ for a
step size $h>0$ is used. 
Hence, we are able to obtain a high-order compact scheme in any
spatial dimension $n \in \mathbb{N}$.

We consider a European Power-Put Basket option, thus the final condition for \eqref{basicpdemutliblsBasket} is given by
\begin{align*}
V(S_1, \ldots ,S_n,T)=&\max\biggl(K-\sum\limits_{i=1}^n \omega_i S_i ,0 \biggr)^p ,
\end{align*}
where $p$ is an integer and the asset weights satisfy
$\sum\limits_{i=1}^n \omega_i=1$. 
%We have $\omega_i> 0$ for $i=1, \ldots,n$ if short-selling
%restrictions apply, whereas $\omega_i \in \mathbb{R}\setminus \{0\}$
%for $i=1, \ldots , n$ holds if there are no such restrictions. 
Applying the transformations \eqref{transformation_BLS_Basket_pde}
leads to the initial condition
\begin{align}\label{general_Initial_Cond_Bls_Basket}
u(x_1, \ldots ,x_n,0)=&K^{p - 1}\max\biggl(1-\sum\limits_{i=1}^n \omega_i e^{\frac{\sigma_i x_i}{\gamma}} ,0 \biggr)^p.
\end{align}

\subsection{Semi-discrete two-dimensional Black-Scholes equation}\label{Section_2D_BLS_PDE_Semi_discrete}
In this section we apply our general two-dimensional semi-discrete
scheme, see Section~\ref{Section_2DGeneralPDESemi_discrete}, to the
two-dimensional Black-Scholes model. To obtain the semi-discrete
scheme \eqref{semi_discrete_pde_two_dimensions_general} we have to
apply \eqref{pde_coefficients_n_dim_bls} with $n=2$ to the
coefficients in Section~\ref{Section_2DGeneralPDESemi_discrete}, which gives
\begin{align*}
\hat{K}_{i_1,i_2}  = & {\frac {{\gamma}^{2}(5-2{\rho^{2}_{{12}}})}{3{h}^{2}}}
+\frac{\varsigma_1 ^{2}+\varsigma_2^{2}}{3},\;
\hat{K}_{i_1\pm 1,i_2}  =  {\frac {{\gamma}^{2}{\rho^{2}_{{12}}}}{3{h}^{2}}}
\pm \frac{\gamma\varsigma_1 }{3h}
\mp \frac{\gamma\varsigma_2\rho_{{12}}}{3h}
- \frac{\varsigma_1 ^{2}}{6}
-{\frac {{\gamma}^{2}}{3{h}^{2}}},
\\
\hat{K}_{i_1,i_2\pm 1} = & {\frac {{\gamma}^{2}{\rho^{2}_{{12}}}}{3{h}^{2}}}
\pm \frac{\gamma\varsigma_2} {3h}
\mp \frac{\gamma\varsigma_1\rho_{{12}} } {3h}
- \frac{ \varsigma_2 ^{2}}{6}
- \,{\frac {{\gamma}^{2}}{3{h}^{2}}},
\\
\hat{K}_{i_1\pm 1,i_2-1}  = & \pm \frac{ \varsigma_2 \varsigma_1 }{12}
- \frac{\gamma\varsigma_2 }{12h}
\pm \frac{\gamma\varsigma_1 }{12h}
- \frac{\gamma\varsigma_1\rho_{{12}} }{6h}
\pm \frac{\gamma\varsigma_2\rho_{{12}}}{6h}
- {\frac {{\gamma}^{2}}{12{h}^{2}}}
\pm {\frac {{\gamma}^{2}\rho_{{12}}}{4{h}^{2}}}
- {\frac {{\gamma}^{2}{\rho^{2}_{{12}}}}{6{h}^{2}}} ,
\\
\hat{K}_{i_1\pm 1,i_2+1}  = &  \frac{\gamma\varsigma_2 }{12h}
\mp \frac{ \varsigma_2\varsigma_1 }{12}
\pm \frac{\gamma\varsigma_1 }{12h}
+ \frac{\gamma\rho_{{12}}\varsigma_1 }{6h}
\pm \frac{\gamma\varsigma_2\rho_{{12}}}{6h}
- {\frac {{\gamma}^{2}}{12{h}^{2}}}
\mp {\frac {{\gamma}^{2}\rho_{{12}}}{4{h}^{2}}}
- {\frac {{\gamma}^{2}{\rho^{2}_{{12}}}}{6{h}^{2}}},
\end{align*}
%\end{array}
%\end{eqnarray}
where $\hat{K}_{l,m}$ is the coefficient of $U_{l,m}\left(\tau\right)$ for $l \in \{i_1 - 1, i_1, i_1 + 1\}$ and $m \in \{i_2 - 1, i_2, i_2 + 1\}$. 
The coefficients of $\partial_{\tau}U_{l,m}\left(\tau\right)$ are given by
\begin{align*}
M_{i_1,i_2}  =& \frac{2}{3} , & M_{i_1+1, i_2\pm 1} = & M_{i_1-1, i_2\mp 1}  = \pm \frac{\rho_{{12}}}{24} , \\
M_{i_1 \pm 1,i_2}  = & \frac{1}{12}
\mp \frac{h \varsigma_1 }{12\gamma} , & M_{i_1,i_2\pm1}  = & \frac{1}{12}
\mp \frac{h \varsigma_2 }{12\gamma} .
\end{align*}
Additionally, it holds $\tilde{g}(x,\tau) = 0$. 
This gives a semi-discrete scheme of the form \eqref{semi_discrete_pde_two_dimensions_general}, where $K_x$ and $M_x$ are time-independent. 
As in \ref{sec:hoctime} we apply Crank-Nicolson type time discretisation and obtain the fully discrete scheme for the spatial interior.

\subsection{Semi-discrete three-dimensional Black-Scholes equation}\label{Section_3D_BLS_PDE_Semi_discrete}
In this section we give the semi-discrete scheme
\eqref{semi_discrete_pde_two_dimensions_general} for the
three-dimensional Black-Scholes Basket option. Using
\eqref{pde_coefficients_n_dim_bls} with  $n=3$ in
Section~\ref{Section_2DGeneralPDESemi_discrete} and the appendix
we obtain the coefficients $\hat{K}_{k,l,m}$ of
$U_{k,l,m}\left(\tau\right)$ for $k\in \{ i_1 - 1, i_1, i_1 + 1\}$,
$l\in \{ i_2 - 1, i_2, i_2 + 1\}$ and $m\in \{ i_3 - 1, i_3, i_3 +
1\}$, which are
\begin{align*}
\hat{K}_{i_1,i_2,i_3}  = &
%> latex(CoeffU5forLaTexPluggedIn);
\frac{\varsigma_1 ^{2}}{3}
 +\frac{\varsigma_2 ^{2}}{3}
+\frac{\varsigma_3 ^{2}}{3}
%\\
%&
%- {\frac {2{\gamma}^{2}{\rho^{2}_{{12}}} + 2{\gamma}^{2}{\rho^{2}_{{13}}} + 2{\gamma}^{2}{\rho^{2}_{{23}}}}{3{h}^{2}}}
-{\frac {2{\gamma}^{2}{\rho^{2}_{{12}}}}{3{h}^{2}}}
-{\frac {2{\gamma}^{2}{\rho^{2}_{{13}}}}{3{h}^{2}}}
-{\frac {2{\gamma}^{2}{\rho^{2}_{{23}}}}{3{h}^{2}}}
+{\frac {2{\gamma}^{2}}{{h}^{2}}} ,
\\
% > latex(CoeffU6forLaTexPluggedIn);
\hat{K}_{i_1 \pm 1,i_2,i_3}  = & 
 \pm \frac{\gamma\varsigma_1 }{6h}
- \frac{\varsigma_1 ^{2} }{6}
 \mp \frac{\gamma \rho_{{12}}\varsigma_2 }{3h}
+ {\frac {{\gamma}^{2}{\rho^{2}_{{12}}}}{3{h}^{2}}}
- {\frac {{\gamma}^{2}}{6{h}^{2}}} 
  \mp \frac{\gamma\rho_{{13}} \varsigma_3 }{3h}
+ {\frac {{\gamma}^{2}{\rho^{2}_{{13}}}}{3{h}^{2}}},
\\
% > latex(CoeffU8forLaTexPluggedIn);
\hat{K}_{i_1,i_2\pm 1,i_3}  = & 
 \pm \frac{\gamma\varsigma_2 }{6h}
- \frac{\varsigma_2 ^{2}}{6}
 \mp \frac{\gamma\varsigma_1 }{3h}
 + {\frac {{\gamma}^{2}{\rho^{2}_{{12}}}}{3{h}^{2}}}
 - {\frac {{\gamma}^{2}}{6{h}^{2}}} 
 \mp \frac{\gamma\rho_{{23}}\varsigma_3 }{3h}
 + {\frac {{\gamma}^{2}{\rho^{2}_{{23}}}}{3{h}^{2}}},
\\
%> latex(CoeffU23forLaTexPluggedIn);
\hat{K}_{i_1,i_2,i_3\pm 1}  = & 
\pm\frac{\gamma\varsigma_3 }{6h}
- \frac{\varsigma_3 ^{2}}{6}
 \mp\frac{\gamma\rho_{{13}} }{3h}
+ {\frac {{\gamma}^{2}{\rho^{2}_{{13}}}}{3{h}^{2}}}
- {\frac {{\gamma}^{2}}{6{h}^{2}}} 
 \mp\frac{\gamma\rho_{{23}}\varsigma_2 }{3h}
+ {\frac {{\gamma}^{2}{\rho^{2}_{{23}}}}{3{h}^{2}}},
\\
%> latex(CoeffU3forLaTexPluggedIn);
% \hat{K}_{i_1\pm 1,i_2-1,i_3} = & 
%  \pm \frac{\gamma  \varsigma_1 }{12h}
% - \frac{\gamma  \varsigma_2 }{12h}
%  \pm \frac{ \varsigma_1  \varsigma_2 }{12}
% -  {\frac {{\gamma}^{2}}{12{h}^{2}}} 
% - \frac{\gamma \rho_{{12}} \varsigma_1 }{6h}
%  \pm \frac{\gamma \rho_{{12}} \varsigma_2 }{6h}
%  \pm  {\frac {{\gamma}^{2}\rho_{{12}}}{6{h}^{2}}}
% -  {\frac {{\gamma}^{2}{\rho^{2}_{{12}}}}{6{h}^{2}}}
%  \mp  {\frac {{\gamma}^{2}\rho_{{13}}\rho_{{23}}}{6{h}^{2}}}
% ,
% \\
\hat{K}_{i_1\pm 1,i_2-1,i_3} = & 
-\gamma   \frac{\varsigma_2\mp \varsigma_1}{12h}
 \pm \frac{ \varsigma_1  \varsigma_2 }{12}
-  {\frac {{\gamma}^{2}}{12{h}^{2}}} 
-\gamma \rho_{{12}}  \frac{\varsigma_1 \mp \varsigma_2  }{6h}
-  {\gamma}^{2}{\frac {{\rho^{2}_{{12}}}\mp  \rho_{{12}} \pm \rho_{{13}}\rho_{{23}} }{6{h}^{2}}},\\
% > latex(CoeffU9forLaTexPluggedIn);
% \hat{K}_{i_1\pm 1,i_2+1,i_3}  = & 
%  \pm \frac{\gamma \varsigma_1 }{12h}
% + \frac{\gamma  \varsigma_2}{12h}
%  \mp \frac{ \varsigma_1  \varsigma_2 }{12}
% -  {\frac {{\gamma}^{2}}{12{h}^{2}}} 
% + \frac{\gamma \rho_{{12}} \varsigma_1 }{6h}
%  \pm \frac{\gamma \rho_{{12}} \varsigma_2 }{6h}
%  \mp  {\frac {{\gamma}^{2}\rho_{{12}}}{6{h}^{2}}}
% -  {\frac {{\gamma}^{2}{\rho^{2}_{{12}}}}{6{h}^{2}}}
%  \pm  {\frac {{\gamma}^{2}\rho_{{13}}\rho_{{23}}}{6{h}^{2}}}
% ,
% \\
% > latex(CoeffU9forLaTexPluggedIn);
\hat{K}_{i_1\pm 1,i_2+1,i_3}  = & 
\gamma \frac{\varsigma_2\pm \varsigma_1}{12h}
  \mp \frac{ \varsigma_1  \varsigma_2 }{12}
-  {\frac {{\gamma}^{2}}{12{h}^{2}}} 
+\gamma \rho_{{12}}  \frac{\varsigma_1\pm \varsigma_2 }{6h}
- {\gamma}^{2} {\frac {{\rho^{2}_{{12}}}\pm \rho_{{12}}  \mp \rho_{{13}}\rho_{{23}} }{6{h}^{2}}},\\
%> latex(CoeffU15forLaTexPluggedIn);
% \hat{K}_{i_1\pm 1,i_2,i_3-1}  = & 
%  \pm \frac{\gamma  \varsigma_1 }{12h}
% - \frac{\gamma  \varsigma_3 }{12h}
%  \pm \frac{ \varsigma_1  \varsigma_3 }{12}
% -  {\frac {{\gamma}^{2}}{12{h}^{2}}} 
% - \frac{\gamma \rho_{{13}} \varsigma_1 }{6h}
%  \pm \frac{\gamma \rho_{{13}} \varsigma_3 }{6h}
%  \pm  {\frac {{\gamma}^{2}\rho_{{13}}}{6{h}^{2}}}
%  -  {\frac {{\gamma}^{2}{\rho^{2}_{{13}}}}{6{h}^{2}}}
%  \mp  {\frac {{\gamma}^{2}\rho_{{12}}\rho_{{23}}}{6{h}^{2}}}
% ,
% \\
\hat{K}_{i_1\pm 1,i_2,i_3-1}  = & 
- \gamma  \frac{\varsigma_3\mp \varsigma_1}{12h}
 \pm \frac{ \varsigma_1  \varsigma_3 }{12}
-  {\frac {{\gamma}^{2}}{12{h}^{2}}} 
- \gamma \rho_{{13}} \frac{\varsigma_1\mp \varsigma_3}{6h}
 -{\gamma}^{2}  {\frac {{\rho^{2}_{{13}}}\mp \rho_{{13}} \pm \rho_{{12}}\rho_{{23}}}{6{h}^{2}}},\\
%> latex(CoeffU24forLaTexPluggedIn);
% \hat{K}_{i_1\pm 1,i_2,i_3+1}  = & 
% \pm \frac{\gamma  \varsigma_1 }{12h}
% + \frac{\gamma  \varsigma_3 }{12h}
%  \mp\frac{ \varsigma_1  \varsigma_3}{12} 
% -  {\frac {{\gamma}^{2}}{12{h}^{2}}}
% + \frac{\gamma \rho_{{13}} \varsigma_1 }{6h}
%  \pm\frac{\gamma \rho_{{13}} \varsigma_3 }{6h}
%  \mp {\frac {{\gamma}^{2}\rho_{{13}}}{6{h}^{2}}}
%  -  {\frac {{\gamma}^{2}{\rho^{2}_{{13}}}}{6{h}^{2}}}
%  \pm {\frac {{\gamma}^{2}\rho_{{12}}\rho_{{23}}}{6{h}^{2}}},\\
\hat{K}_{i_1\pm 1,i_2,i_3+1}  = & 
 \gamma \frac{\varsigma_3\pm \varsigma_1 }{12h}
 \mp\frac{ \varsigma_1  \varsigma_3}{12} 
-  {\frac {{\gamma}^{2}}{12{h}^{2}}}
+ \gamma \rho_{{13}} \frac{\varsigma_1\pm \varsigma_3 }{6h}
- {\gamma}^{2} {\frac {{\rho^{2}_{{13}}} \pm \rho_{{13}}\mp   \rho_{{12}}\rho_{{23}} }{6{h}^{2}}},\\
%> latex(CoeffU17forLaTexPluggedIn);
% \hat{K}_{i_1,i_2 \pm 1,i_3-1}  = & 
% \pm \frac{\gamma  \varsigma_2 }{12h}
% - \frac{\gamma  \varsigma_3 }{12h}
%  \pm \frac{ \varsigma_2  \varsigma_3 }{12}
% -  {\frac {{\gamma}^{2}}{12{h}^{2}}} 
% - \frac{\gamma \rho_{{23}} \varsigma_2 }{6h}
%  \pm \frac{\gamma \rho_{{23}} \varsigma_3 }{6h}
%  \pm  {\frac {{\gamma}^{2}\rho_{{23}}}{6{h}^{2}}}
% -  {\frac {{\gamma}^{2}{\rho^{2}_{{23}}}}{6{h}^{2}}}
%  \mp  {\frac {{\gamma}^{2}\rho_{{12}}\rho_{{13}}}{6{h}^{2}}} ,
% \\
\hat{K}_{i_1,i_2 \pm 1,i_3-1}  = & 
- \gamma \frac{\varsigma_3 \mp\varsigma_2}{12h}
\pm \frac{ \varsigma_2  \varsigma_3 }{12}
-  {\frac {{\gamma}^{2}}{12{h}^{2}}} 
- \gamma \rho_{{23}} \frac{\varsigma_2\mp \varsigma_3 }{6h}
- {\gamma}^{2} {\frac {{\rho^{2}_{{23}}}\mp \rho_{{23}} \pm \rho_{{12}}\rho_{{13}}}{6{h}^{2}}},\\
%> latex(CoeffU26forLaTexPluggedIn);
% \hat{K}_{i_1,i_2\pm 1,i_3+1}  = & 
% \pm \frac{\gamma  \varsigma_2 }{12h}
% + \frac{\gamma  \varsigma_3 }{12h}
%  \mp \frac{ \varsigma_2  \varsigma_3}{12} 
% -  {\frac {{\gamma}^{2}}{12{h}^{2}}} 
% + \frac{\gamma \rho_{{23}} \varsigma_2 }{6h}
%  \pm \frac{\gamma \rho_{{23}} \varsigma_3 }{6h}
%  \mp  {\frac {{\gamma}^{2}\rho_{{23}}}{6{h}^{2}}}
%  \pm  {\frac {{\gamma}^{2}\rho_{{12}}\rho_{{13}}}{6{h}^{2}}}
% -  {\frac {{\gamma}^{2}{\rho^{2}_{{23}}}}{6{h}^{2}}},
% \\
\hat{K}_{i_1,i_2\pm 1,i_3+1}  = & 
 \gamma  \frac{\varsigma_3\pm \varsigma_2}{12h}
 \mp \frac{ \varsigma_2  \varsigma_3}{12} 
-  {\frac {{\gamma}^{2}}{12{h}^{2}}} 
+ \gamma \rho_{{23}} \frac{\varsigma_2 \pm \varsigma_3}{6h}
- {\gamma}^{2} {\frac {{\rho^{2}_{{23}}} \pm \rho_{{23}} \mp \rho_{{12}}\rho_{{13}} }{6{h}^{2}}},\\
%> latex(CoeffU12forLaTexPluggedIn);
\hat{K}_{i_1 \pm 1,i_2-1,i_3-1}  = & 
 \pm \gamma \frac{\rho_{{23}} \varsigma_1+\rho_{{13}} \varsigma_2+\rho_{{12}} \varsigma_3 }{24h}
-  {\gamma}^{2}{\frac {\rho_{{23}}\mp \rho_{{12}} \mp \rho_{{13}} }{24{h}^{2}}}
- {\gamma}^{2} {\frac {\rho_{{12}}\rho_{{13}}\mp \rho_{{12}}\rho_{{23}} \mp \rho_{{13}}\rho_{{23}}}{12{h}^{2}}} ,
\\
%> latex(CoeffU18forLaTexPluggedIn);
\hat{K}_{i_1\pm 1,i_2+1,i_3-1}  = & 
 \mp \gamma \frac{\rho_{{23}} \varsigma_1 + \rho_{{13}} \varsigma_2+\rho_{{12}} \varsigma_3}{24h}
 +{\gamma}^{2}  {\frac {\rho_{{23}}\mp  \rho_{{12}}\pm
     \rho_{{13}}}{24{h}^{2}}}
+ {\gamma}^{2} {\frac {\rho_{{12}}\rho_{{13}}\pm \rho_{{12}}\rho_{{23}}\mp \rho_{{13}}\rho_{{23}}}{12{h}^{2}}} ,
\\
%> latex(CoeffU21forLaTexPluggedIn);
\hat{K}_{i_1\pm 1,i_2-1,i_3+1}  = & 
\mp \gamma  \frac{\rho_{{23}} \varsigma_1+\rho_{{13}} \varsigma_2 +\rho_{{12}} \varsigma_3  }{24h}
+{\gamma}^{2}  {\frac {\rho_{{23}}\pm  \rho_{{12}}\mp
    \rho_{{13}}}{24{h}^{2}}} 
+{\gamma}^{2}  {\frac {\rho_{{13}}\rho_{{23}}\mp \rho_{{12}}\rho_{{23}}\pm \rho_{{12}}\rho_{{13}}}{12{h}^{2}}},
\\
%> latex(CoeffU27forLaTexPluggedIn);
\hat{K}_{i_1\pm 1,i_2+1,i_3+1}  = & 
\pm \gamma \frac{\rho_{{23}} \varsigma_1+   \rho_{{13}} \varsigma_2+ \rho_{{12}} \varsigma_3  }{24h}
- {\gamma}^{2} {\frac {\rho_{{23}}\pm \rho_{{12}} \pm
    \rho_{{13}}}{24{h}^{2}}}
- {\gamma}^{2} {\frac {\rho_{{12}}\rho_{{23}}\pm \rho_{{12}}\rho_{{13}}\pm \rho_{{13}}\rho_{{23}}}{12{h}^{2}}}.
\end{align*}
%\end{array}
%\end{eqnarray}
Similarly, we get the coefficients $\hat{M}_{k,l,m}$ of
$\partial_{\tau} U_{k,l,m}\left(\tau\right)$, given by
%\begin{eqnarray}
%\notag \begin{array}{rclrcl}
\begin{align*}
%> latex(CoeffG3forLaTexPluggedIn);
\hat{M}_{i\pm 1,j,m-1}  = & \hat{M}_{i\mp 1,j,m+1} = \mp \frac{\rho_{{13}}}{24}, & \hat{M}_{i,j \pm 1,m-1}  = & \hat{M}_{i,j\mp 1,m+1} =\mp \frac{\rho_{{23}}}{24}  ,
\\
\hat{M}_{i\pm 1,j-1,m}  = & \hat{M}_{i \mp 1 ,j+1,m }  = \mp \frac{\rho_{{12}}}{24}, &\hat{M}_{i\pm 1 , j , m}  = & \frac{1}{12} \mp\frac{h \varsigma_1}{12\gamma} , 
\\ 
\hat{M}_{i,j\pm 1,m}  = & \frac{1}{12} \mp\frac{h \varsigma_2 }{12\gamma} , &\hat{M}_{i,j,m \pm 1}  = & \frac{1}{12}  \mp\frac{h \varsigma_3} {12\gamma}, \quad \hat{M}_{i,j,m}  =   \frac{1}{2} ,
\\
\hat{M}_{i \pm 1,j-1,m+1}  = &\hat{M}_{i\pm 1,j+1,m+1}=0 & \hat{M}_{i \pm 1,j-1,m-1}  = & \hat{M}_{i\pm 1,j+1,m-1}=0 .
\end{align*}
%\end{array}
%\end{eqnarray}
Additionally, we have $\tilde{g}(x,\tau)  =  0$. We obtain a semi-discrete scheme of the form \eqref{semi_discrete_pde_two_dimensions_general}, where $K_x$ and $M_x$ are time-independent.
As in \ref{sec:hoctime} we apply Crank-Nicolson type time discretisation and obtain the fully discrete scheme for the spatial interior.

\subsection{Treatment of the boundary conditions}\label{boundary_conditions_multi_dim_bls}
After deriving a high-order compact scheme for the spatial interior we
now discuss the boundary conditions.
 
\subsubsection{Lower boundaries}\label{Lower_boundaries_n_dim_BLS_Basket}
The first boundary we discuss is $S_i=0$ for some $i\in I \subset \{1,
\ldots n\}$ at time $t\in [0,T[$. Once the value of the asset is zero,
it stays constant over time, see \eqref{sde_for_stock}. Hence, using
$S_i=0$ for $i\in I$ in \eqref{basicpdemutliblsBasket} and applying
the transformation \eqref{transformation_BLS_Basket_pde} leads to
\begin{align*}
-\frac{\gamma^2}{2}\sum\limits_{\substack{i=1\\i \notin I}}^n \frac{\partial^2 u}{\partial x_i^2} - \gamma^2 \sum\limits_{\substack{i,j=1\\ i,j \notin I\\i<j}}^n \rho_{ij} \frac{\partial^2 u}{\partial x_i \partial x_j} + \gamma\sum\limits_{\substack{i=1\\i \notin I}}^n \varsigma_i\frac{\partial u}{\partial x_i} = & f,
\end{align*}
with $f=-u_\tau.$
% Comparing this differential equation with \eqref{basic_general_pde_nD_without_usage_of_f} we can see that the
% coefficients are again given by \eqref{pde_coefficients_n_dim_bls} for
% $i,j \in \{1, \ldots , n\} \setminus I$ with $i<j$. 
Hence, at these
boundaries we are able to obtain high-order compact schemes in the same manner as shown for the spatial interior with then $n - \vert I \vert$ spatial dimensions, 
as the coefficients of the partial differential equations of these boundaries satisfy condition \eqref{restriction_general_pde_nD_number_one}. 
The case $I=\left\{1, \ldots, n\right\}$, i.e.\ $\vert I \vert =n$, leads to the Dirichlet boundary condition
$u(x_{\min}^{(1)},\ldots , x_{\min}^{(n)},\tau)=u(x_{\min}^{(1)},\ldots , x_{\min}^{(n)},0)$
at time $\tau\in ]0,\tau_{\max}]$, since in that case $u_\tau=0$.

\subsubsection{Upper boundaries} \label{Sec_Upper_bound_BLS_Basket}
Upper boundaries are boundaries with $S_i=S_i^{\max}$ for some $i \in J \subset \left\{1, \ldots, n\right\}$ at time $t\in [0,T[$. For a sufficiently large $S_i^{\max}$ for $i \in J$, we set
%\begin{eqnarray}
\begin{align*}
\left. \frac{\partial V\left(S_1, \ldots,S_n, t \right) }{\partial S_i}\right|_{S_i = S_i^{\max}} \equiv& 0,
\end{align*}
%\end{eqnarray}
with $S_k \in \left[S_k^{\min} , S_k^{\max}\right]$ for $k = \{1, \ldots , n\} \setminus \{i\}$ for a European Power Put Basket option. Employing this in \eqref{basicpdemutliblsBasket} and
using the transformations \eqref{transformation_BLS_Basket_pde}, yields 
\begin{align}\label{usedpdemutliblsBasketboundaryx_i_max}
-\frac{\gamma^2}{2}\sum\limits_{\substack{i=1\\i \notin J}}^n \frac{\partial^2 u}{\partial x_i^2} - \gamma^2 \sum\limits_{\substack{i,j=1\\ i,j \notin J\\i<j}}^n \rho_{ij} \frac{\partial^2 u}{\partial x_i \partial x_j} + \gamma \sum\limits_{\substack{i=1\\i \notin J}}^n \varsigma_i\frac{\partial u}{\partial x_i} =&f,
\end{align}
with $f=-u_\tau$.
Hence the upper boundaries show the same behaviour as the lower boundaries for a European Power Put Basket.
% and we can obtain a high-order compact scheme for these boundaries as well. 
% As in Section \ref{Lower_boundaries_n_dim_BLS_Basket}, 
Analogously, we have the Dirichlet boundary condition
$u(x_{\max}^{(1)},\ldots , x_{\max}^{(n)}, \tau)  =  u(x_{\max}^{(1)},\ldots , x_{\max}^{(n)}, 0)$
for $\tau \in ]0, \tau_{\max}]$ if $J=\{1, \ldots , n\}$.
\subsection{Combination of upper and lower boundaries} 
A combination of upper and lower boundaries thus behaves in the same manner and the resulting partial differential equations with $n-\vert I \vert -\vert J \vert$ spatial dimensions satisfy condition \eqref{restriction_general_pde_nD_number_one} as well. 
For the corner points of $\Omega$ we have $\vert I\vert + \vert J \vert =n$ and thus again $u=u_0$.

\section{Numerical experiments for Black-Scholes Basket options}
\label{sec:numexp}
In this section we discuss the numerical experiments for the
Black-Scholes Basket Power Puts in spatial dimensions $n=2,3$. The
equation systems which have to be solved over time have been derived
in Section \ref{Section_Application}. According to \cite{KrThWi70}, we
cannot expect fourth-order convergence if the initial condition is not
sufficiently smooth. Hence, we have to smooth the initial
condition for Power Puts with $p=1,2$. In \cite{KrThWi70} suitable
smoothing operators are identified in Fourier space. Since the order
of convergence of our high-order compact scheme is four, we use the
smoothing operator $\Phi_4$, given by its Fourier transform
$$
\hat{\Phi}_4 (\omega)= \biggl(\frac{\sin({\omega}/{2}) }{{\omega}/{2}}\biggr)^4 \left[ 1 + \frac{2}{3}\sin^2({\omega}/{2} )\right].
$$
This leads to the smoothed initial condition 
$$
\tilde{u}_0\left(x_1,x_2\right) = \frac{1}{h^2} \int\limits_{-3h}^{3h} \int\limits_{-3h}^{3h}\Phi_4 \left(\frac{x}{h}\right)\Phi_4 \left(\frac{y}{h}\right) u_0\left(x_1-x,x_2-y\right) \text{d}x\, \text{d}y,
$$
in the case $n=2$
% and
%$$
%\tilde{u}_0\left(x_1,x_2,x_3\right) =  \int\limits_{-3h}^{3h} \int\limits_{-3h}^{3h}\int\limits_{-3h}^{3h}\Phi_4 \left(\frac{x}{h}\right)\Phi_4 \left(\frac{y}{h}\right)\Phi_4 \left(\frac{z}{h}\right) u_0\left(x_1-x,x_2-y,x_3 - z\right) \text{dx } \text{dy }\text{dz}
%$$
%in the case $n=3$ 
for any step size $h>0$, where $u_0$ is the original initial condition and $\Phi_4(x)$ denotes the Fourier inverse of $\hat{\Phi}_4(\omega)$, see \cite{KrThWi70}. If $u_0$ is smooth enough in the integrated region around $\left(x_1, \ldots, x_n\right)$, we have $\tilde{u}_0 \left(x_1, \ldots, x_n\right) = u_0\left(x_1, \ldots, x_n\right)$. That means that it is possible to identify the points where smoothing is necessary. 

\begin{figure}[h]\centering
  \includegraphics[width=11cm,height=3.5cm]{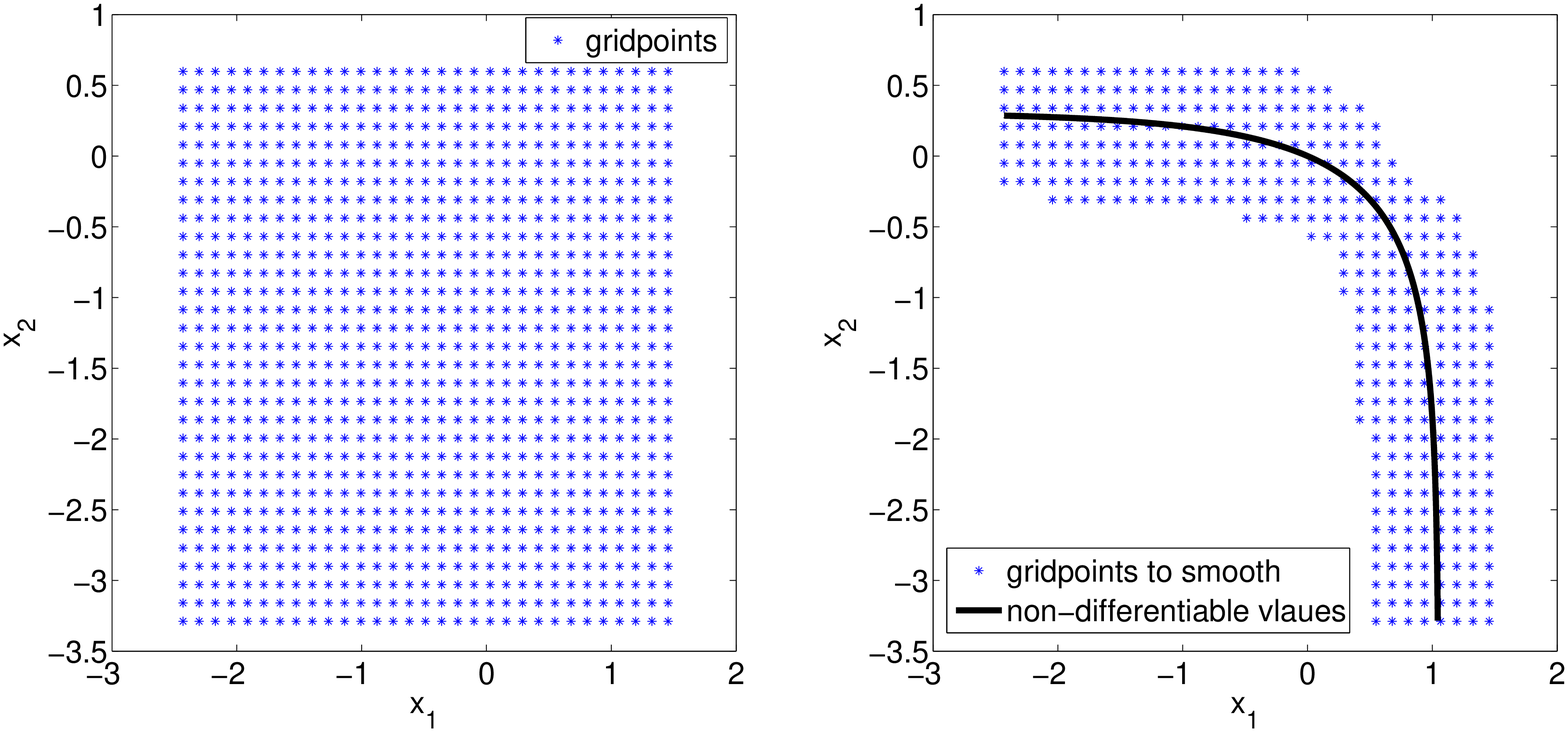}
  \caption{Example of grid points selected for the smoothing procedure in two
    space dimensions. We employ the smoothing operators of Kreiss et al.\
    \cite{KrThWi70} to ensure high-order convergence of the approximations of the smoothed problem to the true solution of \eqref{usedpdemutliblsBasket}.}
  \label{fig:smooting_points}
\end{figure}
Figure \ref{fig:smooting_points} shows an example of a two-dimensional
grid on the left side and on the right side a graph of the non-differentiable
points of the initial condition given in
\eqref{general_Initial_Cond_Bls_Basket} together with the identified
grid points,
where smoothing is necessary. The points are chosen in
such a way that we ensure that the non-differentiable points have
no influence on $\tilde{u}_0\left(x_1,x_2\right) $ for those points,
which are not shown in Figure \ref{fig:smooting_points} on the right
hand side. This approach reduces the
necessary calculations significantly. As $h
\rightarrow 0$, the smooth initial condition $\tilde{u}_0$ converges
towards the original initial condition $u_0$ given in
\eqref{general_Initial_Cond_Bls_Basket}. The results in \cite{KrThWi70}
guarantee high-order convergence of the approximation of the smoothed problem to the true solution of \eqref{usedpdemutliblsBasket}.

We use the relative $l^2$-error $\Vert U_{\text{ref}} -
U\Vert_{l^2}/\Vert U_{\text{ref}}\Vert_{l^2},$ as well as the $l^{\infty}$-error $\Vert
U_{\text{ref}} - U\Vert_{l^{\infty}}$ to
examine the numerical convergence rate, where $U_{\text{ref}}$
denotes a reference solution on a fine grid and $U$ is the
approximation. When identifying the convergence order of the schemes,
we determine it as the slope of the linear least square fit of the
individual error points in the loglog-plots of error versus number of
grid points per spatial direction.

\subsection{Numerical example with two underlying assets}\label{sec:num_example_2D_const_coeff}
In this section we report the numerical results for a two-dimensional
Black-Scholes Basket Power Put. We compare the high-order compact
scheme (`HOC') with the standard scheme (`2nd order'), which is obtained by using the
central difference operator directly in \eqref{usedpdemutliblsBasket}
for $n=2$ with no further action and thus leads to a classical second-order
scheme. We consider plain European Puts ($p=1$) and use the smoothing procedure outlined above for the initial condition
\eqref{general_Initial_Cond_Bls_Basket}. The parameter values
$$ \sigma_1 = 0.25, \quad \sigma_2 = 0.35,\quad \gamma = 0.25, \quad r = \ln(1.05), \quad \omega_1 = 0.35 = 1-\omega_2, \quad K=10, $$
and $\delta_1=\delta_2 = 0$ are used, unless stated otherwise. 
The parabolic mesh ratio is fixed to $\Delta \tau/h^2=0.4$,
although we point out that neither the von Neumann stability analysis
nor our numerical experiments revealed any practical restrictions on
its choice. 

\begin{figure}[h]\centering
      \includegraphics[width=6cm,height=3.5cm]{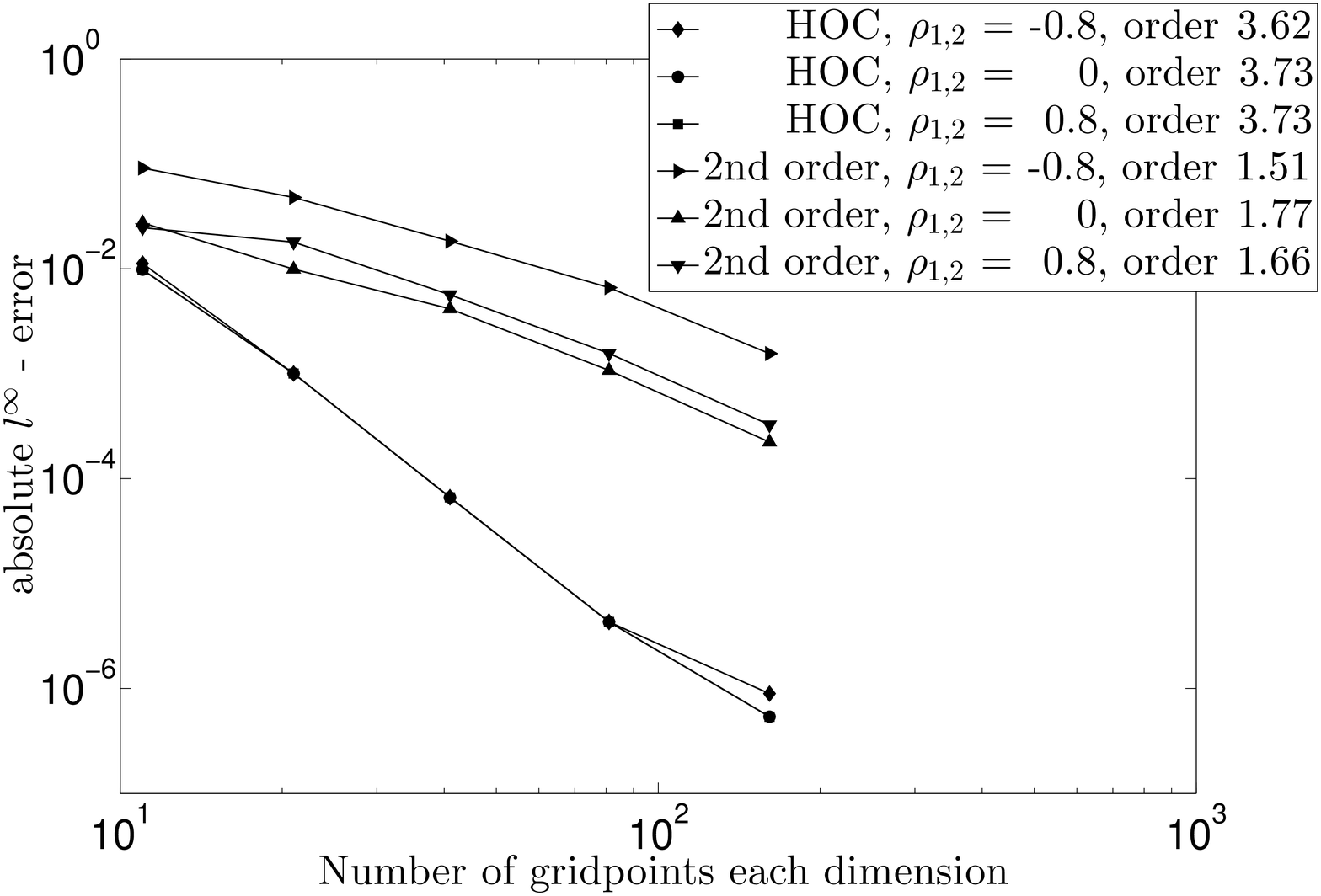}%
      \includegraphics[width=6cm,height=3.5cm]{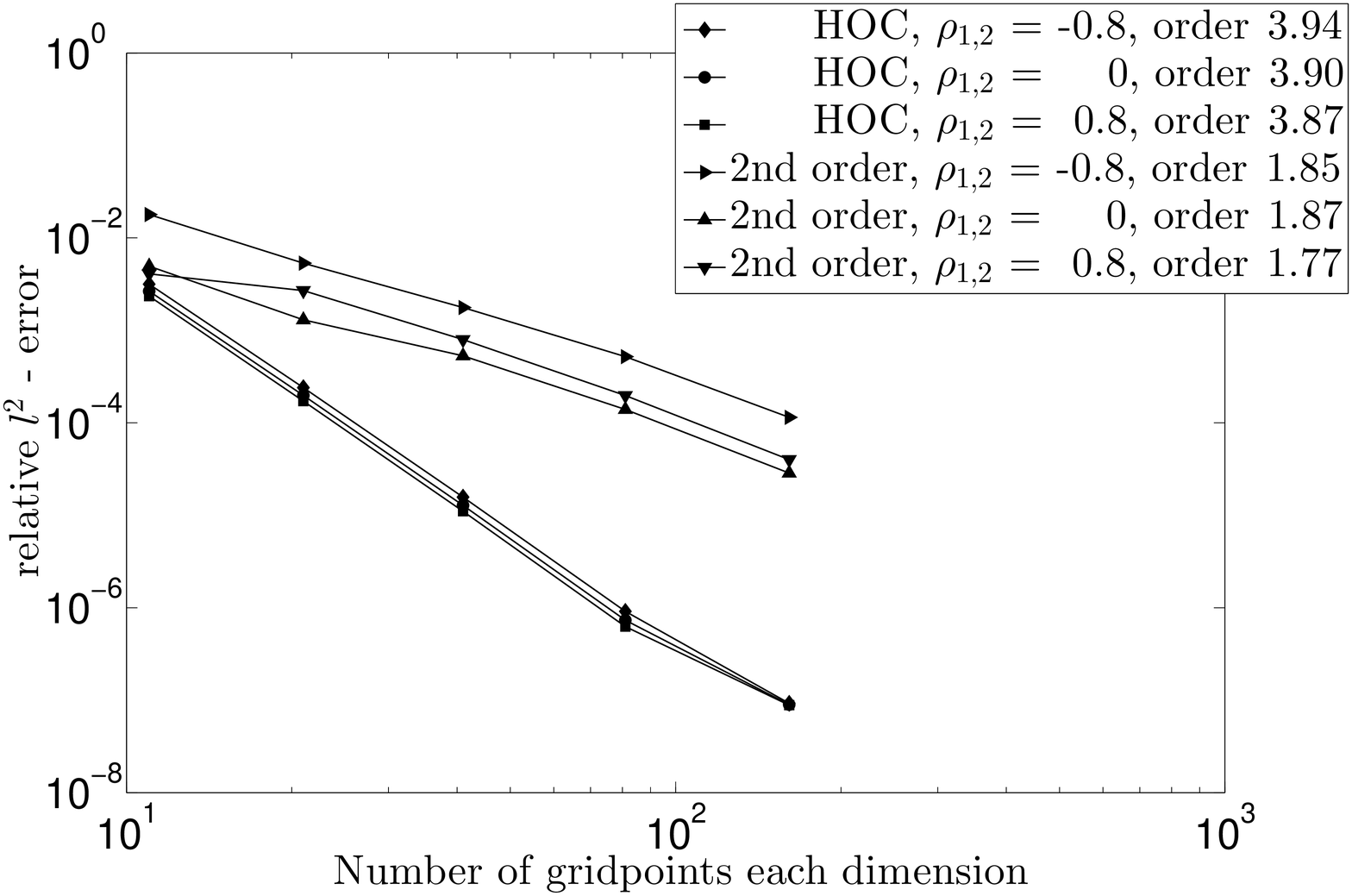}
      \caption{$l^{\infty}$- (left) and relative $l^2$-error (right)
        for two-dimensional Black-Scholes Basket Put and smoothed initial condition.}
      \label{fig:loglog_plot_L_2_error_Power_Basket_power_1_K_is_10_n_is_2}
\end{figure} 

Figure~\ref{fig:loglog_plot_L_2_error_Power_Basket_power_1_K_is_10_n_is_2}
shows convergence plots for the $l^{\infty}$-error (left)
and for the relative $l^2$-error (right) for a European Put,
respectively. The initial condition is smoothed using the procedure
outlined above. For both types of errors we observe that the numerical
convergence rates agree very well with the theoretical orders of the
schemes. The high-order compact scheme yields numerical convergence
orders close to four and strongly outperforms the standard
second-order scheme. The choice of the correlation parameter 
% $\rho_{1,2} =-0.8$, $\rho_{1,2} = 0$ and $\rho_{1,2} = 0.8$ 
$\rho_{12} =-0.8$, $\rho_{12} = 0$ and $\rho_{12} = 0.8$ 
has very little influence.

\subsection{Numerical example with three assets}
In this section we report on numerical experiments with three
underlying assets.
We choose the parameters 
\begin{equation*}
\delta_i = 0.01,\quad  \sigma_i = 0.3,\quad
\omega_i = 1/3,  \quad  r =\ln ( 1.05),\quad  \gamma = 0.3,\quad T =
0.25, \quad K=10.
\end{equation*}
Due to the computational intensity of the three-dimensional problem the number of grid points
per spatial dimension is smaller compared to the results in two
dimensions reported above. To ensure that at the same time there is a
sufficiently large number of grid points in time, we fix the parabolic
mesh ratio to $\Delta \tau / h^2 = 0.1$ (not for stability reasons).
We perform two types of experiments: without any correlation between
the assets (labeled by `nc' in the plots), and
with correlation (labeled by `c' in the plots) using the parameter values $\rho_{1,2} = -0.4$, $\rho_{1,3} = -0.1$, $\rho_{2,3} = -0.2.$

We compare the standard approximation to our high-order compact
scheme for European Power Put
options with $p=3,4$. For the European Power Puts with $p=1,2$ one
would smooth the initial condition, similar as above, to ensure
high-order convergence.
\begin{figure}\centering
      \includegraphics[width=6cm,height=3.5cm]{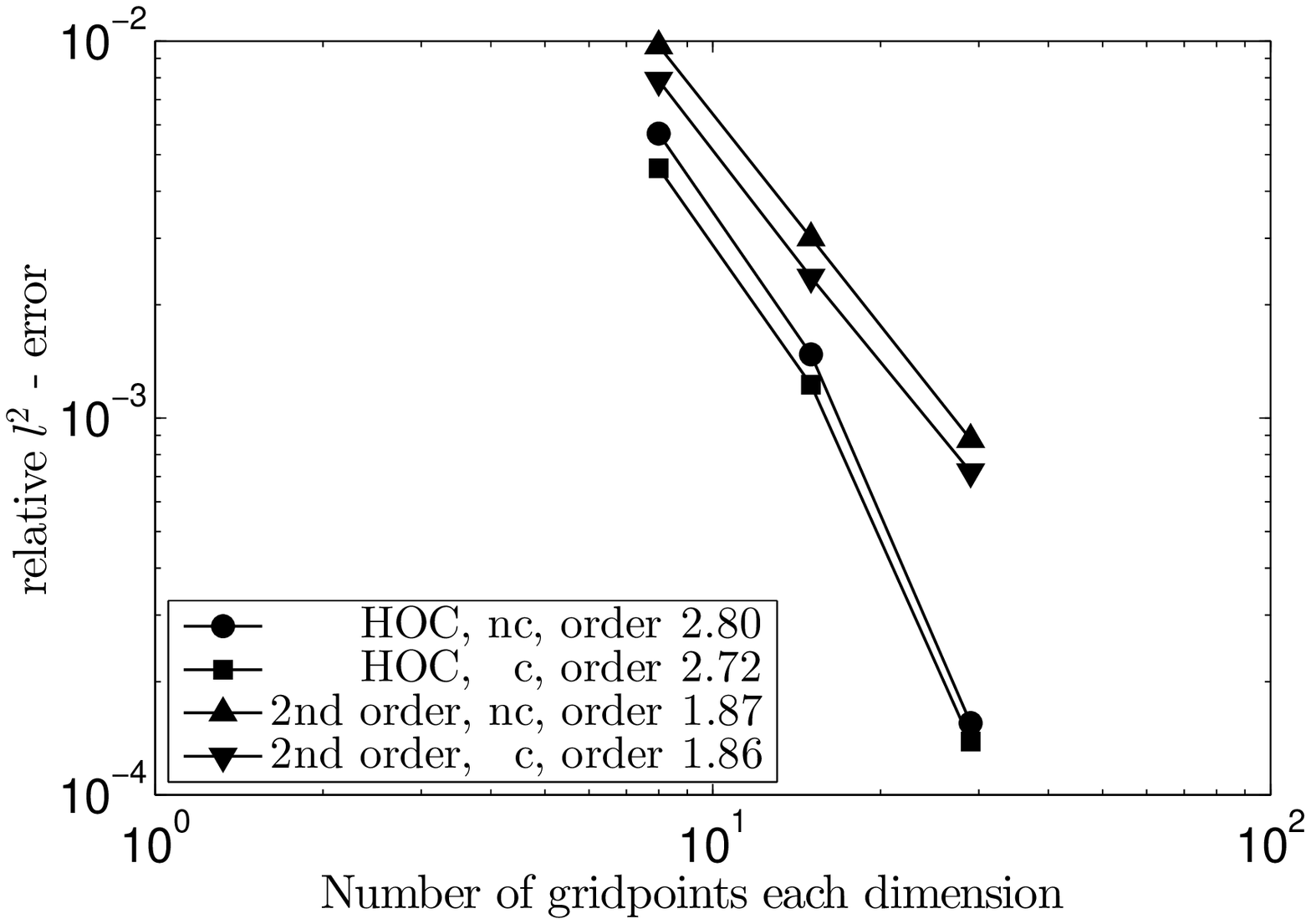}%
      \includegraphics[width=6cm,height=3.5cm]{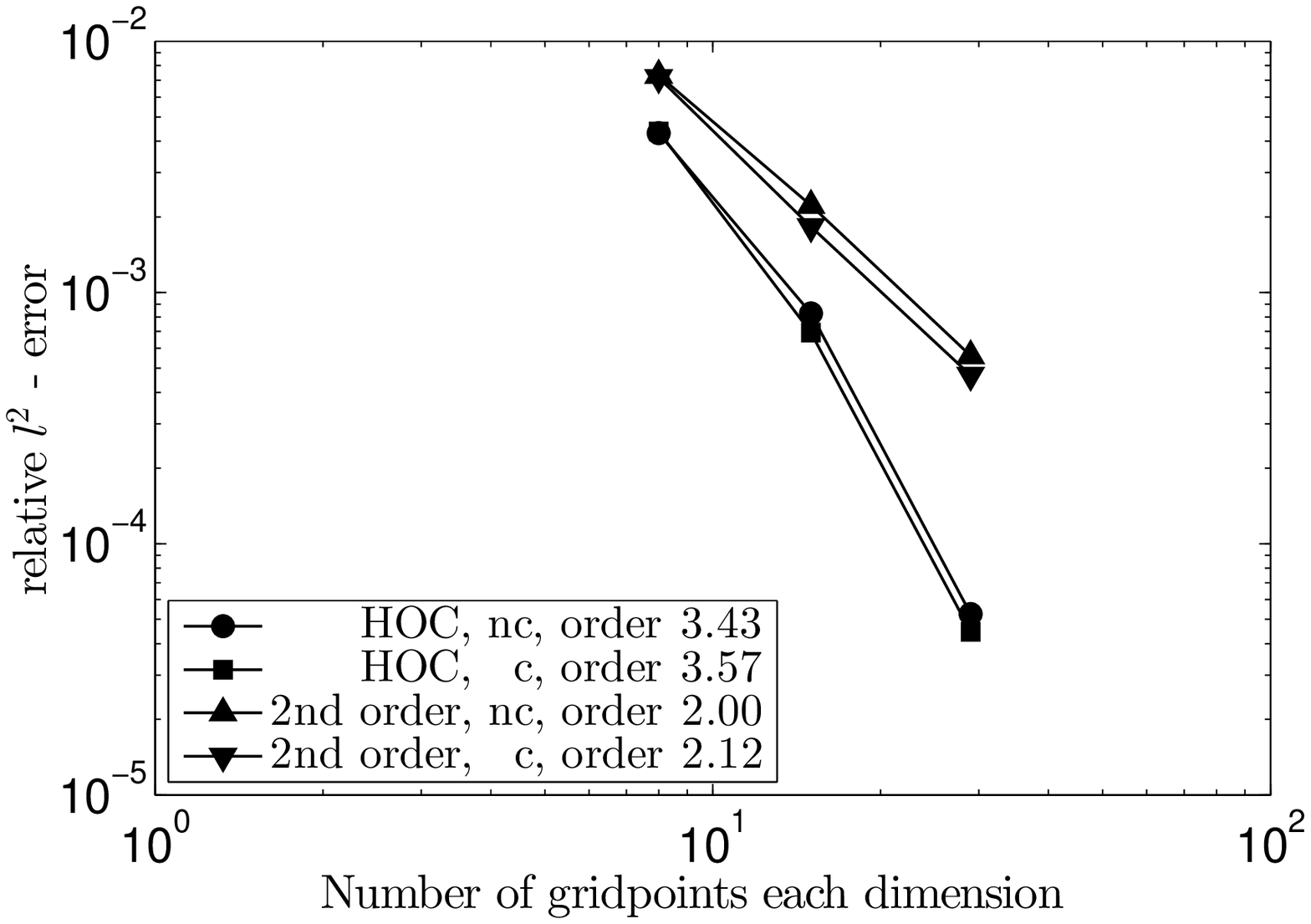}
      \caption{Relative $l^2$-error for three-dimensional
        Black-Scholes Basket Power Put, with $p=3$ (left) and $p=4$ (right)}
      \label{fig:loglog_plot_L_2_error_Power_Basket_power_34_K_is_10_n_is_3}
\end{figure}
Figure ~\ref{fig:loglog_plot_L_2_error_Power_Basket_power_34_K_is_10_n_is_3} shows
the convergence of the relative $l^2$-error for a European Power Put with $p=3$ and
$p=4$. We use the original initial conditions, no smoothing is
applied here. The
numerical convergence rates of the high-order compact scheme are
slightly reduced to about three and three and a half,
respectively. Additional smoothing, which we omitted here due to limit
the computational load, would result in even better results. Still, in
the high-order compact scheme outperforms the standard second-order scheme significantly
in all cases.
\subsection{Numerical example with space-dependent coefficients}
In this section we will apply numerical examples for
\eqref{basicpdemutliblsBasket}, where the continuous dividends are
dependent on the underlying asset price.
For both asset prices $S_i$ with $i=1,2$ we consider the following example, where the continuous dividends are zero for small asset prices and 
then smoothly increase around an asset price $S_i^\star>0$ towards a given parameter $\delta_i^\star\geq 0$,
$$ \delta_i = \delta_i(S_i) = \frac{\delta_i^\star\left[\tanh\left(\zeta_i(S_i-S_i^\star)\right) - \tanh\left(-\zeta_i S_i^\star\right) \right]}{2}.$$
Financially, the interpretation could be as follows: if the asset is a
dividend-paying stock, low stock prices may mean that the company may
not be in the financial position to pay dividends. A low value of $\zeta_i>0$ leads to slow transition from $0$ to $\delta_i^\star$. 
We can apply the transformations given in
\eqref{transformation_BLS_Basket_pde} and hence use the coefficients
\begin{equation}\label{pde_coefficients_2_dim_bls_space_depend_coefficients}
a_i= -\frac{\gamma^2}{2}, \quad b_{ij}= -\gamma^2 \rho_{ij},  \quad
 c_i=\gamma \left(\frac{\sigma_i}{2} - \frac{r - \delta_i(Ke^{  \frac{x_i\sigma_i}{\gamma}})}{\sigma_i}\right), \quad g= 0, 
\end{equation}
for $i=1,2$ to obtain the coefficients of the numerical scheme, see Section~\ref{Section_2DGeneralPDESemi_discrete}.
The boundary conditions of Section~\ref{boundary_conditions_multi_dim_bls} are employed and the parameter values of Section~\ref{sec:num_example_2D_const_coeff} 
%$ \sigma_1 = 0.25, \sigma_2 = 0.35,\gamma = 0.25, r = \ln(1.05), \omega_1 = 0.35 = 1-\omega_2, K=10 ,$
as well as
$$ \delta_1^\star = 0.02,\quad  \delta_2^\star = 0.01,\quad   \zeta_1 = 0.35,\quad   \zeta_2 = 0.5,\quad  S_i^\star = {0.9 K}/{\omega_i},$$
for $i=1,2$ are used in the numerical experiments.
\begin{figure}[h]\centering
      \includegraphics[width=6cm,height=3.5cm]{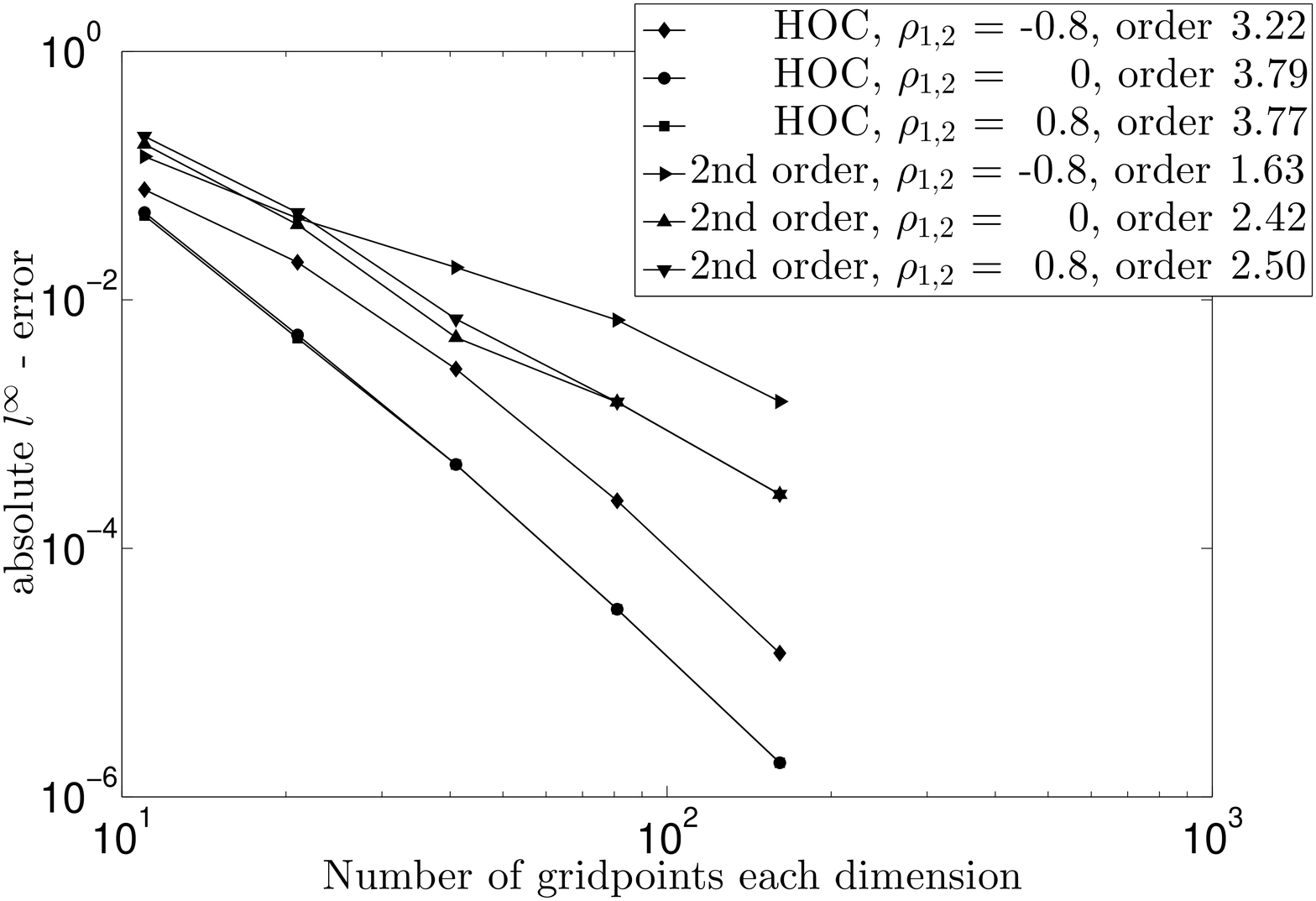}%
      \includegraphics[width=6cm,height=3.5cm]{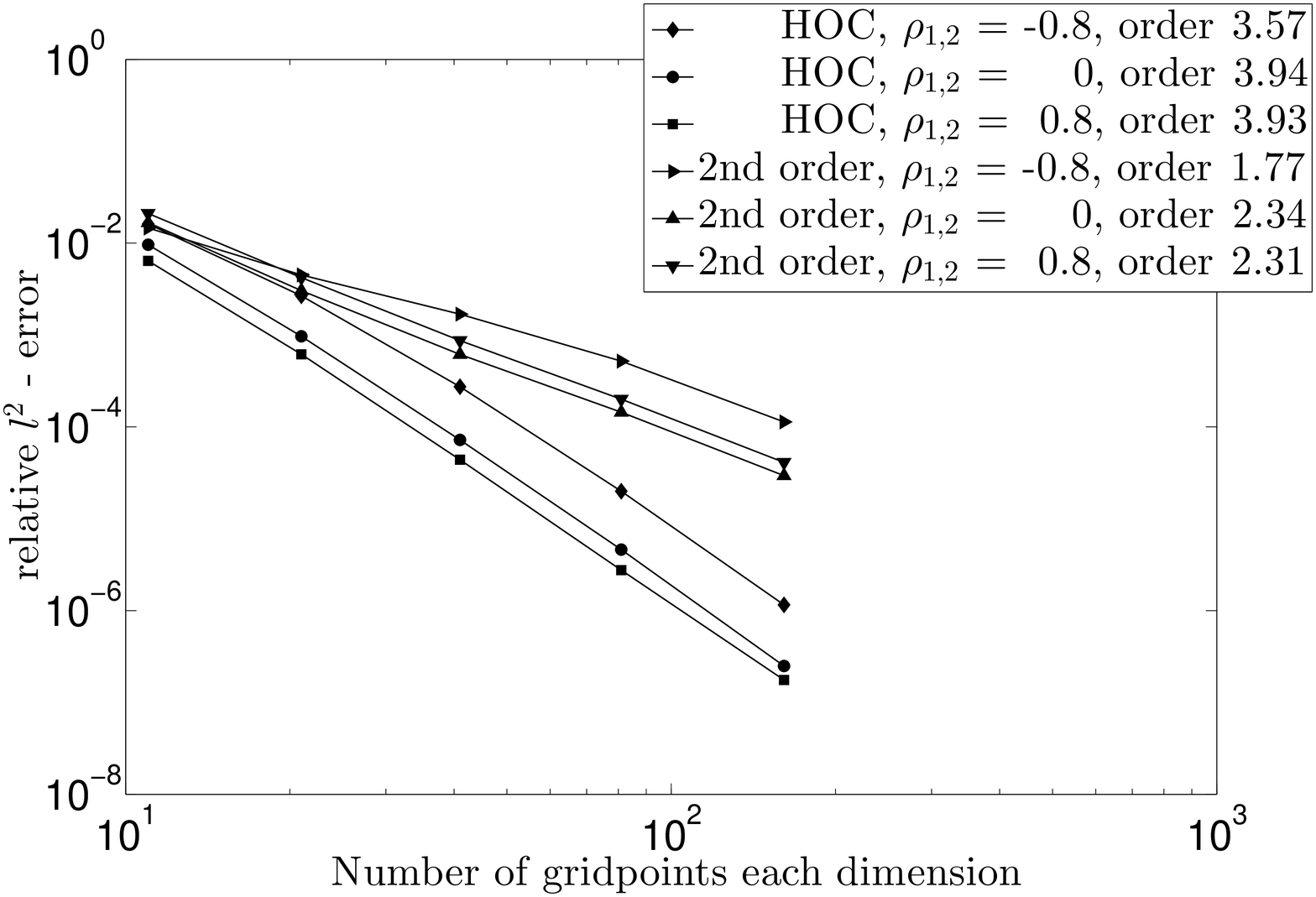}
      \caption{$l^{\infty}$- (left) and relative $l^2$-error (right)
        for two-dimensional Black-Scholes Basket Put with space-dependent dividend and smoothed initial condition.}
      \label{fig:loglog_plot_L_2_error_Power_Basket_power_1_space_dep_cont_dividend_K_is_10_n_is_2}
\end{figure} 
Figure~\ref{fig:loglog_plot_L_2_error_Power_Basket_power_1_space_dep_cont_dividend_K_is_10_n_is_2}
shows numerical convergence plots for a European Put with space-dependent continuous dividend.
Again, smoothing of the initial condition is
employed. For the $l^{\infty}$-error as well as the $l^{2}$-error the high-order compact scheme has convergence rates close to
four for $\rho_{1,2}=0,$ and $\rho_{1,2}=0.8$. The convergence rate for the case $\rho_{1,2}= -0.8$ is $3.22$ in the $l^{\infty}$-error, which is mainly due to the two approximations with eleven and 21 grid-points per spatial direction, and $3.57$ in the $l^{2}$-error. 
The convergence orders of the standard scheme are for $\rho_{1,2} = 0,
0.8$ are slightly above two for both types of errors. For $\rho_{1,2}= -0.8$ the convergence orders are
noticeable lower as well. In all cases of correlation the high-order compact scheme outperforms the standard second-order scheme significantly.

\section{Conclusion}\label{sec:Conclusion_n_dim_HOC}

We presented a new high-order compact scheme for a 
class of 
parabolic partial differential equations with time and space dependent
coefficients, including mixed second-order derivative terms in $n$
spatial dimensions. The resulting schemes are fourth-order accurate in
space and second-order accurate in time. In a thorough von Neumann
stability analysis, where we focussed on the case of vanishing mixed
derivative terms, we showed that a necessary stability condition
holds for frozen coefficients without further conditions in two and three space dimensions. For
non-vanishing mixed derivative terms we were able to give partial
results. The results suggest unconditional stability of the scheme.
As an application example we considered the pricing of
European Power Put options in the
multidimensional Black-Scholes model. The typical
initial conditions of this problem lack sufficient regularity,
therefore a suitable smoothing procedure was employed to ensure
high-order convergence.
In all numerical experiments performed a comparative standard second-order
scheme is significantly outperformed.

Although we derived the scheme in arbitrary space dimension, it was not
our aim in this paper to attack the so-called curse of
dimensionality. The issue of exponentially increasing number of
unknowns with growing spatial dimension on full grids is of course
alleviated to some degree by a high-order scheme. To obtain a similar accuracy as a
second-order scheme which uses $\mathcal{O}(N^d)$ unknowns on a full
grid, our high-order compact approach will `only' require
$\mathcal{O}(N^{d/2})$ unknowns. To really attack very
high-dimensional problems one would need to combine our approach with
hierarchical approaches, e.g.\ using sparse grids (typically requiring
$\mathcal{O}(N\ln(N)^{d-1})$ unknowns), which is beyond the scope of
the present paper. 

\section*{Acknowledgment}
The authors are grateful to the anonymous reviewers for their
constructive comments.
The second author acknowledges support by the European Union
in the FP7-PEOPLE-2012-ITN Program under Grant Agreement Number 304617
(FP7 Marie Curie Action, Project Multi-ITN {\em STRIKE -- Novel
  Methods in Computational Finance}).

%%% Local Variables: 
%%% mode: latex
%%% TeX-master: "Project_2_Masterfile"
%%% End: 

\bibliographystyle{siam} 
\bibliography{Project_2_Masterfile}
\begin{appendices}
\section{Coefficients for semi-discrete scheme in three
  dimensions}

Considering an interior grid point
$\bigl(x_{i_1}^{(1)},x_{i_2}^{(2)},x_{i_3}^{(3)}\bigr)\in
\interior{G}_h^{(3)}$ and time $\tau \in
\Omega_{\tau}$, the coefficients $\hat{K}_{k,l,m}$ of
$U_{k,l,m}\left(\tau\right)$ for $k\in \{ i_1 - 1, i_1, i_1 + 1\}$,
$l\in \{ i_2 - 1, i_2, i_2 + 1\}$ and $m\in \{ i_3 - 1, i_3, i_3 +
1\}$ of the three-dimensional semi-discrete scheme in Section 5.2 are given by:
\begin{small}
\begin{align*}
%> latex(CoeffGeneralK5forLaTexA1EqualA2EqualA3FORLATEX);
\hat{K}_{i_1,i_2,i_3}  = & 
{\frac {b_{{23}}[a]_{{{{2}}}}c_{{3}}}{6{a}^{2}}}
+ {\frac {b_{{13}}[a]_{{{{1}}}}c_{{3}}}{6{a}^{2}}}
- \frac{[c_{{3}}]_{{{{3}}}}}{3}
- {\frac {{c^{2}_{{1}}}}{6a}}
- {\frac {{c^{2}_{{3}}}}{6a}}
- \frac{[a]_{{{ 11}}}}{2}
- \frac{[a]_{{{ 22}}}}{2}
- \frac{[a]_{{{ 33}}}}{2}
+ {\frac {b_{{13}}[a]_{{{{3}}}}c_{{1}}}{6{a}^{2}}}
\\[4pt]
&
+ {\frac {b_{{12}}[a]_{{{{2}}}}c_{{1}}}{6{a}^{2}}}
- {\frac {4a}{{h}^{2}}}
+ {\frac {b_{{13}}[a]_{{{{3}}}}[a]_{{{{1}}}}}{{a}^{2}}}
+ {\frac {b_{{23}}[a]_{{{{3}}}}[a]_{{{{2}}}}}{{a}^{2}}}
+ {\frac {b_{{23}}[a]_{{{{3}}}}c_{{2}}}{6{a}^{2}}}
+ {\frac {b_{{12}}[a]_{{{{1}}}}[a]_{{{{2}}}}}{{a}^{2}}}
\\[4pt]
&
+ {\frac {b_{{12}}[a]_{{{{1}}}}c_{{2}}}{6{a}^{2}}}
- {\frac {b_{{13}}[c_{{3}}]_{{{{1}}}}}{6a}}
- {\frac {c_{{1}}[a]_{{{{1}}}}}{6a}}
+ {\frac {{b^{2}_{{23}}}}{3a{h}^{2}}}
- {\frac {b_{{12}}[a]_{{{{1}}{{2}}}}}{2a}}
- {\frac {c_{{2}}[a]_{{{{2}}}}}{6a}}
+ {\frac {{b^{2}_{{13}}}}{3a{h}^{2}}}
+ {\frac {{b^{2}_{{12}}}}{3a{h}^{2}}}
%\numberthis \label{three_dim_general_K_coefficients_one}
\\[4pt]
&
- {\frac {c_{{3}}[a]_{{{{3}}}}}{6a}}
- {\frac {b_{{13}}[a]_{{{{1}}{{2}}}}}{2a}}
- {\frac {b_{{23}}[c_{{2}}]_{{{{3}}}}}{6a}}
- {\frac {b_{{12}}[c_{{2}}]_{{{{1}}}}}{6a}}
- {\frac {b_{{23}}[a]_{{{{2}}{{3}}}}}{2a}}
- {\frac {b_{{13}}[c_{{1}}]_{{{{3}}}}}{6a}}
- {\frac {b_{{23}}[c_{{3}}]_{{{{2}}}}}{6a}}
\\[4pt]
&
- {\frac {b_{{12}}[c_{{1}}]_{{{{2}}}}}{6a}}
- {\frac {{c^{2}_{{2}}}}{6a}}
+ {\frac {{[a]^{2}_{{{{1}}}}}}{a}}
+ {\frac {{[a]^{2}_{{{{3}}}}}}{a}}
+ {\frac {{[a]^{2}_{{{{2}}}}}}{a}}
- \frac{[c_{{2}}]_{{{{2}}}}}{3}
- \frac{[c_{{1}}]_{{{{1}}}}}{3} ,\\
%> latex(CoeffGeneralK3forLaTexA1EqualA2EqualA3FORLATEX);
\hat{K}_{i_1\pm 1,i_2-1,i_3}  = & 
{\frac {b_{{13}}[a]_{{{{3}}}}b_{{12}}}{24{a}^{2}h}}
\mp {\frac {b_{{23}}[a]_{{{{3}}}}b_{{12}}}{24{a}^{2}h}}
\mp \frac {[b_{{12}}]_{{{11}}}}{48}
\mp \frac {[b_{{12}}]_{{{22}}}}{48}
\mp \frac {[b_{{12}}]_{{{33}}}}{48}
+ {\frac {b_{{12}}[a]_{{{{1}}}}}{12ah}}
- {\frac {b_{{12}}c_{{1}}}{12ah}}
\pm {\frac {b_{{12}}c_{{2}}}{12ah}}
\\[4pt]
&
\pm {\frac {b_{{12}}[a]_{{{{1}}}}c_{{1}}}{48{a}^{2}}}
\pm {\frac {b_{{12}}[a]_{{{{1}}}}[b_{{12}}]_{{{{2}}}}}{48{a}^{2}}}
\mp {\frac {{b^{2}_{{12}}}[a]_{{{{1}}}}}{24{a}^{2}h}}
\mp {\frac {b_{{12}}[a]_{{{{2}}}}}{12ah}}
\pm  {\frac {b_{{23}}[a]_{{{{2}}}}[b_{{12}}]_{{{{3}}}}}{48{a}^{2}}}
\pm {\frac {b_{{13}}[a]_{{{{1}}}}[b_{{12}}]_{{{{3}}}}}{48{a}^{2}}}
\\[4pt]
&
\pm {\frac {b_{{12}}[a]_{{{{2}}}}[b_{{12}}]_{{{{1}}}}}{48{a}^{2}}}
\pm {\frac {b_{{12}}[a]_{{{{2}}}}c_{{2}}}{48{a}^{2}}}
\pm {\frac {b_{{23}}[a]_{{{{3}}}}c_{{1}}}{48{a}^{2}}}
\pm {\frac {b_{{23}}[a]_{{{{3}}}}[b_{{12}}]_{{{{2}}}}}{48{a}^{2}}}
- {\frac {b_{{12}}[b_{{12}}]_{{{{2}}}}}{24ah}}
\pm {\frac {b_{{13}}[a]_{{{{3}}}}[b_{{12}}]_{{{{1}}}}}{48{a}^{2}}}
\\[4pt]
&
\pm {\frac {b_{{13}}[a]_{{{{3}}}}c_{{2}}}{48{a}^{2}}}
\pm {\frac {b_{{12}}[b_{{12}}]_{{{{1}}}}}{24ah}}
\pm {\frac {b_{{23}}[b_{{12}}]_{{{{3}}}}}{24ah}}
- {\frac {b_{{13}}[b_{{12}}]_{{{{3}}}}}{24ah}}
\pm {\frac {b_{{13}}b_{{23}}}{12a{h}^{2}}}
+ {\frac {[a]_{{{{2}}}}{b^{2}_{{12}}}}{24{a}^{2}h}}
- {\frac {c_{{2}}}{12h}}
\mp {\frac {b_{{12}}}{6{h}^{2}}}
%\numberthis \label{three_dim_general_K_coefficients_two}
\\[4pt]
&
\pm {\frac {c_{{1}}}{12h}}
+ {\frac {a}{6{h}^{2}}}
- {\frac {[b_{{12}}]_{{{{1}}}}}{12h}}
\pm {\frac {[b_{{12}}]_{{{{2}}}}}{12h}}
\mp {\frac {b_{{23}}[b_{{12}}]_{{{{2}}{{3}}}}}{48a}}
\mp {\frac {b_{{13}}[c_{{2}}]_{{{{3}}}}}{48a}}
\mp {\frac {b_{{12}}[b_{{12}}]_{{{{1}}{{2}}}}}{48a}}
\mp {\frac {b_{{23}}[c_{{1}}]_{{{{3}}}}}{48a}}
\\[4pt]
&
\mp {\frac {b_{{12}}[c_{{1}}]_{{{{1}}}}}{48a}}
\pm  {\frac {[a]_{{{{1}}}}c_{{2}}}{24a}}
\mp {\frac {c_{{2}}[b_{{12}}]_{{{{2}}}}}{48a}}
+ {\frac {{b^{2}_{{12}}}}{12a{h}^{2}}}
\mp {\frac {b_{{13}}[b_{{12}}]_{{{{1}}{{2}}}}}{48a}}
\mp {\frac {c_{{1}}[b_{{12}}]_{{{{1}}}}}{48a}}
\pm {\frac {[a]_{{{{3}}}}[b_{{12}}]_{{{{3}}}}}{24a}}
\\[4pt]
&
\mp {\frac {c_{{3}}[b_{{12}}]_{{{{3}}}}}{48a}}
\pm {\frac {[a]_{{{{2}}}}c_{{1}}}{24a}}
\mp {\frac {b_{{12}}[c_{{2}}]_{{{{2}}}}}{48a}}
\pm {\frac {[a]_{{{{1}}}}[b_{{12}}]_{{{{1}}}}}{24a}}
\pm {\frac {[a]_{{{{2}}}}[b_{{12}}]_{{{{2}}}}}{24a}}
\mp {\frac {c_{{1}}c_{{2}}}{24a}}
\mp \frac{[c_{{1}}]_{{{{2}}}}}{24}
\mp \frac{[c_{{2}}]_{{{{1}}}}}{24},\\
%\begin{split}
\label{three_dim_general_K_coefficients_three}
%> latex(CoeffGeneralK6forLaTexA1EqualA2EqualA3FORLATEX);
\notag \hat{K}_{i\pm 1,j,m}  = & 
\pm {\frac {b_{{23}}[a]_{{{{3}}}}b_{{12}}}{12{a}^{2}h}}
\pm {\frac {b_{{23}}[a]_{{{{2}}}}b_{{13}}}{12{a}^{2}h}}
\mp {\frac {hb_{{12}}[a]_{{{{1}}}}[c_{{1}}]_{{{{2}}}}}{24{a}^{2}}}
\mp {\frac {hb_{{23}}[a]_{{{{2}}}}[c_{{1}}]_{{{{3}}}}}{24{a}^{2}}}
\mp {\frac {hb_{{13}}[a]_{{{{1}}}}[c_{{1}}]_{{{{3}}}}}{24{a}^{2}}}
\\
\notag &
+ {\frac {{c^{2}_{{1}}}}{12a}}
\mp {\frac {hb_{{13}}[a]_{{{{3}}}}[c_{{1}}]_{{{{1}}}}}{24{a}^{2}}}
\mp {\frac {hb_{{23}}[a]_{{{{3}}}}[c_{{1}}]_{{{{2}}}}}{24{a}^{2}}}
\pm \frac{h[c_{{1}}]_{{{ 33}}}}{24}
\pm \frac{h[c_{{1}}]_{{{ 22}}}}{24}
\pm \frac{h[c_{{1}}]_{{{ 11}}}}{24}
\\
\notag &
+ \frac{[a]_{{{ 11}}}}{12}
+ \frac{[a]_{{{ 22}}}}{12}
+ \frac{[a]_{{{ 33}}}}{12}
- {\frac {b_{{13}}[a]_{{{{3}}}}c_{{1}}}{12{a}^{2}}}
\mp {\frac {b_{{13}}[b_{{13}}]_{{{{1}}}}}{12ah}}
\pm {\frac {hc_{{1}}[c_{{1}}]_{{{{1}}}}}{24a}}
\pm {\frac {hc_{{2}}[c_{{1}}]_{{{{2}}}}}{24a}}
\\
\notag &
\mp {\frac {hb_{{12}}[a]_{{{{2}}}}[c_{{1}}]_{{{{1}}}}}{24{a}^{2}}}
\mp {\frac {b_{{23}}[b_{{13}}]_{{{{2}}}}}{12ah}}
\pm {\frac {{b^{2}_{{13}}}[a]_{{{{1}}}}}{12{a}^{2}h}}
\mp {\frac {c_{{3}}b_{{13}}}{6ah}}
\pm {\frac {hb_{{23}}[c_{{1}}]_{{{{2}}{{3}}}}}{24a}}
\pm {\frac {hb_{{13}}[c_{{1}}]_{{{{1}}{{2}}}}}{24a}}
\\
&
\pm {\frac {hb_{{12}}[c_{{1}}]_{{{{1}}{{2}}}}}{24a}}
\pm {\frac {b_{{13}}[a]_{{{{3}}}}}{6ah}}
- {\frac {b_{{12}}[a]_{{{{2}}}}c_{{1}}}{12{a}^{2}}}
\pm {\frac {hc_{{3}}[c_{{1}}]_{{{{3}}}}}{24a}}
\mp {\frac {h[a]_{{{{1}}}}[c_{{1}}]_{{{{1}}}}}{12a}}
\mp {\frac {h[a]_{{{{3}}}}[c_{{1}}]_{{{{3}}}}}{12a}}
\\
\notag &
\mp {\frac {b_{{12}}c_{{2}}}{6ah}}
\pm {\frac {{b^{2}_{{12}}}[a]_{{{{1}}}}}{12{a}^{2}h}}
\pm {\frac {b_{{12}}[a]_{{{{2}}}}}{6ah}}
\mp {\frac {b_{{12}}[b_{{12}}]_{{{{1}}}}}{12ah}}
\mp {\frac {b_{{23}}[b_{{12}}]_{{{{3}}}}}{12ah}}
\pm {\frac {c_{{1}}}{6h}}
+ {\frac {a}{3{h}^{2}}}
\mp {\frac {[b_{{12}}]_{{{{2}}}}}{6h}}
\\
\notag &
- {\frac {b_{{13}}[a]_{{{{3}}}}[a]_{{{{1}}}}}{6{a}^{2}}}
- {\frac {b_{{23}}[a]_{{{{3}}}}[a]_{{{{2}}}}}{6{a}^{2}}}
\mp {\frac {[b_{{13}}]_{{{{3}}}}}{6h}}
- {\frac {b_{{12}}[a]_{{{{1}}}}[a]_{{{{2}}}}}{6{a}^{2}}}
- {\frac {c_{{1}}[a]_{{{{1}}}}}{12a}}
+ {\frac {b_{{12}}[a]_{{{{1}}{{2}}}}}{12a}}
\\
\notag &
+ {\frac {c_{{2}}[a]_{{{{2}}}}}{12a}}
- {\frac {{b^{2}_{{13}}}}{6a{h}^{2}}}
- {\frac {{b^{2}_{{12}}}}{6a{h}^{2}}}
+ {\frac {c_{{3}}[a]_{{{{3}}}}}{12a}}
+ {\frac {b_{{13}}[a]_{{{{1}}{{2}}}}}{12a}}
+ {\frac {b_{{23}}[a]_{{{{2}}{{3}}}}}{12a}}
+ {\frac {b_{{13}}[c_{{1}}]_{{{{3}}}}}{12a}}
\\
\notag &
+ {\frac {b_{{12}}[c_{{1}}]_{{{{2}}}}}{12a}}
- {\frac {{[a]^{2}_{{{{1}}}}}}{6a}}
- {\frac {{[a]^{2}_{{{{3}}}}}}{6a}}
- {\frac {{[a]^{2}_{{{{2}}}}}}{6a}}
+ \frac{[c_{{1}}]_{{{{1}}}}}{6}
\mp {\frac {h[a]_{{{{2}}}}[c_{{1}}]_{{{{2}}}}}{12a}} ,\\
\hat{K}_{i,j\pm 1,m}  = & 
\pm {\frac {b_{{13}}[a]_{{{{3}}}}b_{{12}}}{12{a}^{2}h}}
\mp {\frac {hb_{{13}}[a]_{{{{1}}}}[c_{{2}}]_{{{{3}}}}}{24{a}^{2}}}
\mp {\frac {hb_{{12}}[a]_{{{{1}}}}[c_{{2}}]_{{{{2}}}}}{24{a}^{2}}}
\pm {\frac {b_{{13}}[a]_{{{{1}}}}b_{{23}}}{12{a}^{2}h}}
\mp {\frac {hb_{{13}}[a]_{{{{3}}}}[c_{{2}}]_{{{{1}}}}}{24{a}^{2}}}
\\
&
\pm {\frac {c_{{2}}}{6h}}
\mp {\frac {hb_{{23}}[a]_{{{{3}}}}[c_{{2}}]_{{{{2}}}}}{24{a}^{2}}}
\mp {\frac {hb_{{12}}[a]_{{{{2}}}}[c_{{2}}]_{{{{1}}}}}{24{a}^{2}}}
\mp {\frac {hb_{{23}}[a]_{{{{2}}}}[c_{{2}}]_{{{{3}}}}}{24{a}^{2}}}
+ \frac{[a]_{{{ 11}}}}{12}
+ \frac{[a]_{{{22 }}}}{12}
\\
&
+ \frac{[a]_{{{33}}}}{12}
\pm {\frac {b_{{12}}[a]_{{{{1}}}}}{6ah}}
\mp {\frac {b_{{12}}c_{{1}}}{6ah}}
\mp  {\frac {b_{{12}}[b_{{12}}]_{{{{2}}}}}{12ah}}
\mp {\frac {b_{{13}}[b_{{12}}]_{{{{3}}}}}{12ah}}
\pm {\frac {[a]_{{{{2}}}}{b^{2}_{{12}}}}{12{a}^{2}h}}
+ {\frac {a}{3{h}^{2}}}
\\
&
\mp {\frac {[b_{{12}}]_{{{{1}}}}}{6h}}
\mp {\frac {b_{{23}}c_{{3}}}{6ah}}
\pm {\frac {hb_{{12}}[c_{{2}}]_{{{{1}}{{2}}}}}{24a}}
\pm {\frac {hb_{{23}}[c_{{2}}]_{{{{2}}{{3}}}}}{24a}}
\pm {\frac {hb_{{13}}[c_{{2}}]_{{{{1}}{{2}}}}}{24a}}
- {\frac {b_{{13}}[a]_{{{{3}}}}[a]_{{{{1}}}}}{6{a}^{2}}}
\\
&
\pm {\frac {hc_{{2}}[c_{{2}}]_{{{{2}}}}}{24a}}
\pm {\frac {b_{{23}}[a]_{{{{3}}}}}{6ah}}
- {\frac {b_{{23}}[a]_{{{{3}}}}c_{{2}}}{12{a}^{2}}}
\mp {\frac {b_{{13}}[b_{{23}}]_{{{{1}}}}}{12ah}}
\mp {\frac {b_{{23}}[b_{{23}}]_{{{{2}}}}}{12ah}}
\mp {\frac {h[a]_{{{{2}}}}[c_{{2}}]_{{{{2}}}}}{12a}}
\\
&
\pm {\frac {hc_{{3}}[c_{{2}}]_{{{{3}}}}}{24a}}
\mp {\frac {h[a]_{{{{1}}}}[c_{{2}}]_{{{{1}}}}}{12a}}
\mp {\frac {h[a]_{{{{3}}}}[c_{{2}}]_{{{{3}}}}}{12a}}
\pm {\frac {hc_{{1}}[c_{{2}}]_{{{{1}}}}}{24a}}
\pm {\frac {[a]_{{{{2}}}}{b^{2}_{{23}}}}{12{a}^{2}h}}
- {\frac {b_{{12}}[a]_{{{{1}}}}[a]_{{{{2}}}}}{6{a}^{2}}}
\\
&
- {\frac {b_{{12}}[a]_{{{{1}}}}c_{{2}}}{12{a}^{2}}}
+ {\frac {c_{{1}}[a]_{{{{1}}}}}{12a}}
- {\frac {{b^{2}_{{23}}}}{6a{h}^{2}}}
+ {\frac {b_{{12}}[a]_{{{{1}}{{2}}}}}{12a}}
- {\frac {c_{{2}}[a]_{{{{2}}}}}{12a}}
- {\frac {{b^{2}_{{12}}}}{6a{h}^{2}}}
+ {\frac {c_{{3}}[a]_{{{{3}}}}}{12a}}
\\
&
+ {\frac {b_{{13}}[a]_{{{{1}}{{2}}}}}{12a}}
+ {\frac {b_{{23}}[c_{{2}}]_{{{{3}}}}}{12a}}
+ {\frac {b_{{12}}[c_{{2}}]_{{{{1}}}}}{12a}}
+ {\frac {b_{{23}}[a]_{{{{2}}{{3}}}}}{12a}}
\pm \frac{h[c_{{2}}]_{{{22}}}}{24}
\pm \frac{h[c_{{2}}]_{{{33}}}}{24}
\\
&
\pm \frac{h[c_{{2}}]_{{{11}}}}{24}
+ {\frac {{c^{2}_{{2}}}}{12a}}
- {\frac {{[a]^{2}_{{{{1}}}}}}{6a}}
- {\frac {{[a]^{2}_{{{{3}}}}}}{6a}}
\mp {\frac {[b_{{23}}]_{{{{3}}}}}{6h}}
- {\frac {{[a]^{2}_{{{{2}}}}}}{6a}}
- {\frac {b_{{23}}[a]_{{{{3}}}}[a]_{{{{2}}}}}{6{a}^{2}}}
+ \frac{[c_{{2}}]_{{{{2}}}}}{6} ,\\
%> latex(CoeffGeneralK9forLaTexA1EqualA2EqualA3FORLATEX);
\hat{K}_{i \pm 1,j+1,m}  = & 
- {\frac {b_{{13}}[a]_{{{{3}}}}b_{{12}}}{24{a}^{2}h}}
\mp {\frac {b_{{23}}[a]_{{{{3}}}}b_{{12}}}{24{a}^{2}h}}
\pm \frac{[b_{{12}}]_{{{11}}}}{48}
\pm \frac{[b_{{12}}]_{{{22}}}}{48}
\pm \frac{[b_{{12}}]_{{{33}}}}{48}
- {\frac {b_{{12}}[a]_{{{{1}}}}}{12ah}}
+ {\frac {b_{{12}}c_{{1}}}{12ah}}
\\
&
\pm {\frac {b_{{12}}c_{{2}}}{12ah}}
\mp {\frac {b_{{12}}[a]_{{{{1}}}}c_{{1}}}{48{a}^{2}}}
\mp {\frac {b_{{12}}[a]_{{{{1}}}}[b_{{12}}]_{{{{2}}}}}{48{a}^{2}}}
\mp {\frac {{b^{2}_{{12}}}[a]_{{{{1}}}}}{24{a}^{2}h}}
\mp {\frac {b_{{12}}[a]_{{{{2}}}}}{12ah}}
\mp {\frac {b_{{23}}[a]_{{{{2}}}}[b_{{12}}]_{{{{3}}}}}{48{a}^{2}}}
\\
&
\mp {\frac {b_{{13}}[a]_{{{{1}}}}[b_{{12}}]_{{{{3}}}}}{48{a}^{2}}}
\mp {\frac {b_{{12}}[a]_{{{{2}}}}[b_{{12}}]_{{{{1}}}}}{48{a}^{2}}}
\mp {\frac {b_{{12}}[a]_{{{{2}}}}c_{{2}}}{48{a}^{2}}}
\mp {\frac {b_{{23}}[a]_{{{{3}}}}c_{{1}}}{48{a}^{2}}}
\mp {\frac {b_{{23}}[a]_{{{{3}}}}[b_{{12}}]_{{{{2}}}}}{48{a}^{2}}}
\\
&
+ {\frac {b_{{12}}[b_{{12}}]_{{{{2}}}}}{24ah}}
\mp {\frac {b_{{13}}[a]_{{{{3}}}}[b_{{12}}]_{{{{1}}}}}{48{a}^{2}}}
\mp {\frac {b_{{13}}[a]_{{{{3}}}}c_{{2}}}{48{a}^{2}}}
\pm {\frac {b_{{12}}[b_{{12}}]_{{{{1}}}}}{24ah}}
\pm {\frac {b_{{23}}[b_{{12}}]_{{{{3}}}}}{24ah}}
+ {\frac {b_{{13}}[b_{{12}}]_{{{{3}}}}}{24ah}}
\\
&
\mp {\frac {b_{{13}}b_{{23}}}{12a{h}^{2}}}
- {\frac {[a]_{{{{2}}}}{b^{2}_{{12}}}}{24{a}^{2}h}}
+ {\frac {c_{{2}}}{12h}}
\pm {\frac {b_{{12}}}{6{h}^{2}}}
\pm {\frac {c_{{1}}}{12h}}
+ {\frac {a}{6{h}^{2}}}
+ {\frac {[b_{{12}}]_{{{{1}}}}}{12h}}
\pm {\frac {[b_{{12}}]_{{{{2}}}}}{12h}}
\pm {\frac {b_{{23}}[b_{{12}}]_{{{{2}}{{3}}}}}{48a}}
\\
&
\pm {\frac {b_{{13}}[c_{{2}}]_{{{{3}}}}}{48a}}
\pm {\frac {b_{{12}}[b_{{12}}]_{{{{1}}{{2}}}}}{48a}}
\pm {\frac {b_{{23}}[c_{{1}}]_{{{{3}}}}}{48a}}
\pm {\frac {b_{{12}}[c_{{1}}]_{{{{1}}}}}{48a}}
\mp {\frac {[a]_{{{{1}}}}c_{{2}}}{24a}}
\pm {\frac {c_{{2}}[b_{{12}}]_{{{{2}}}}}{48a}}
+ {\frac {{b^{2}_{{12}}}}{12a{h}^{2}}}
\\
&
\pm {\frac {b_{{13}}[b_{{12}}]_{{{{1}}{{2}}}}}{48a}}
\pm {\frac {c_{{1}}[b_{{12}}]_{{{{1}}}}}{48a}}
\mp {\frac {[a]_{{{{3}}}}[b_{{12}}]_{{{{3}}}}}{24a}}
\pm {\frac {c_{{3}}[b_{{12}}]_{{{{3}}}}}{48a}}
\mp {\frac {[a]_{{{{2}}}}c_{{1}}}{24a}}
\pm {\frac {b_{{12}}[c_{{2}}]_{{{{2}}}}}{48a}}
\\
&
\mp {\frac {[a]_{{{{1}}}}[b_{{12}}]_{{{{1}}}}}{24a}}
\mp {\frac {[a]_{{{{2}}}}[b_{{12}}]_{{{{2}}}}}{24a}}
\pm {\frac {c_{{1}}c_{{2}}}{24a}}
\pm \frac{[c_{{1}}]_{{{{2}}}}}{24}
\pm \frac{[c_{{2}}]_{{{{1}}}}}{24}  ,\\
\hat{K}_{i \pm 1,j-1,m-1}  = & 
\pm {\frac {[b_{{13}}]_{{{{2}}}}}{48h}}
\mp {\frac {b_{{13}}}{24{h}^{2}}}
\pm {\frac {[b_{{12}}]_{{{{3}}}}}{48h}}
\pm {\frac {[b_{{23}}]_{{{{1}}}}}{48h}}
\pm {\frac {b_{{12}}[b_{{13}}]_{{{{1}}}}}{96ah}}
\pm {\frac {b_{{13}}[b_{{12}}]_{{{{1}}}}}{96ah}}
\pm {\frac {b_{{23}}[b_{{13}}]_{{{{3}}}}}{96ah}}
\\
&
\mp {\frac {b_{{23}}b_{{12}}}{24a{h}^{2}}}
\mp {\frac {b_{{13}}[a]_{{{{3}}}}b_{{23}}}{48{a}^{2}h}}
\mp {\frac {b_{{12}}[a]_{{{{2}}}}b_{{23}}}{48{a}^{2}h}}
+ {\frac {b_{{23}}}{24{h}^{2}}}
\pm {\frac {b_{{13}}[b_{{23}}]_{{{{3}}}}}{96ah}}
\pm {\frac {b_{{23}}c_{{1}}}{48ah}}
+ {\frac {b_{{12}}b_{{13}}}{24a{h}^{2}}}
\\
&
\mp {\frac {b_{{12}}}{24{h}^{2}}}
\mp {\frac {[a]_{{{{2}}}}b_{{13}}}{48ah}}
\mp {\frac {[a]_{{{{3}}}}b_{{12}}}{48ah}}
\mp {\frac {b_{{13}}[a]_{{{{1}}}}b_{{12}}}{48{a}^{2}h}}
\mp {\frac {[a]_{{{{1}}}}b_{{23}}}{48ah}}
\pm {\frac {b_{{12}}[b_{{23}}]_{{{{2}}}}}{96ah}}
\\
&
\pm {\frac {b_{{23}}[b_{{12}}]_{{{{2}}}}}{96ah}}
\pm {\frac {c_{{3}}b_{{12}}}{48ah}}
\pm {\frac {b_{{13}}c_{{2}}}{48ah}}
\mp {\frac {b_{{13}}b_{{23}}}{24a{h}^{2}}}  ,\\
%> latex(CoeffGeneralK27forLaTexA1EqualA2EqualA3FORLATEX);
\hat{K}_{i \pm 1,j+1,m+1}  = & 
\pm  {\frac {[b_{{13}}]_{{{{2}}}}}{48h}}
\pm {\frac {b_{{13}}}{24{h}^{2}}}
\pm {\frac {[b_{{12}}]_{{{{3}}}}}{48h}}
\pm {\frac {[b_{{23}}]_{{{{1}}}}}{48h}}
\pm {\frac {b_{{12}}[b_{{13}}]_{{{{1}}}}}{96ah}}
\pm {\frac {b_{{13}}[b_{{12}}]_{{{{1}}}}}{96ah}}
\pm {\frac {b_{{23}}[b_{{13}}]_{{{{3}}}}}{96ah}}
\\
&
\pm {\frac {b_{{23}}b_{{12}}}{24a{h}^{2}}}
\mp {\frac {b_{{13}}[a]_{{{{3}}}}b_{{23}}}{48{a}^{2}h}}
\mp {\frac {b_{{12}}[a]_{{{{2}}}}b_{{23}}}{48{a}^{2}h}}
+ {\frac {b_{{23}}}{24{h}^{2}}}
\pm {\frac {b_{{13}}[b_{{23}}]_{{{{3}}}}}{96ah}}
\pm {\frac {b_{{23}}c_{{1}}}{48ah}}
+ {\frac {b_{{12}}b_{{13}}}{24a{h}^{2}}}
\\
&
\pm {\frac {b_{{12}}}{24{h}^{2}}}
\mp {\frac {[a]_{{{{2}}}}b_{{13}}}{48ah}}
\mp {\frac {[a]_{{{{3}}}}b_{{12}}}{48ah}}
\mp {\frac {b_{{13}}[a]_{{{{1}}}}b_{{12}}}{48{a}^{2}h}}
\mp {\frac {[a]_{{{{1}}}}b_{{23}}}{48ah}}
\pm {\frac {b_{{12}}[b_{{23}}]_{{{{2}}}}}{96ah}}
\\
&
\pm {\frac {b_{{23}}[b_{{12}}]_{{{{2}}}}}{96ah}}
\pm {\frac {c_{{3}}b_{{12}}}{48ah}}
\pm {\frac {b_{{13}}c_{{2}}}{48ah}}
\pm {\frac {b_{{13}}b_{{23}}}{24a{h}^{2}}}  ,\\
\hat{K}_{i\pm 1,j,m-1}  = & 
\mp \frac{[c_{{3}}]_{{{{1}}}}}{24}
\mp {\frac {b_{{13}}}{6{h}^{2}}}
- {\frac {c_{{3}}}{12h}}
- {\frac {[b_{{13}}]_{{{{1}}}}}{12h}}
\mp {\frac {b_{{23}}[a]_{{{{2}}}}b_{{13}}}{24{a}^{2}h}}
\mp \frac{[b_{{13}}]_{{{11}}}}{48}
\mp \frac{[b_{{13}}]_{{{22}}}}{48}
\mp \frac{[b_{{13}}]_{{{33}}}}{48}
\\
&
\pm {\frac {[b_{{13}}]_{{{{3}}}}}{12h}}
\pm {\frac {[a]_{{{{3}}}}[b_{{13}}]_{{{{3}}}}}{24a}}
\mp {\frac {b_{{12}}[c_{{3}}]_{{{{2}}}}}{48a}}
\mp {\frac {b_{{13}}[c_{{3}}]_{{{{3}}}}}{48a}}
\pm {\frac {[a]_{{{{1}}}}[b_{{13}}]_{{{{1}}}}}{24a}}
\mp {\frac {c_{{2}}[b_{{13}}]_{{{{2}}}}}{48a}}
+ {\frac {a}{6{h}^{2}}}
\\
&\pm {\frac {[a]_{{{{3}}}}c_{{1}}}{24a}}
\mp {\frac {b_{{13}}[c_{{1}}]_{{{{1}}}}}{48a}}
\mp {\frac {c_{{1}}[b_{{13}}]_{{{{1}}}}}{48a}}
\mp {\frac {b_{{23}}[b_{{13}}]_{{{{2}}{{3}}}}}{48a}}
\pm {\frac {[a]_{{{{1}}}}c_{{3}}}{24a}}
\mp {\frac {c_{{1}}c_{{3}}}{24a}}
\pm {\frac {[a]_{{{{2}}}}[b_{{13}}]_{{{{2}}}}}{24a}}
\\
&
\mp {\frac {c_{{3}}[b_{{13}}]_{{{{3}}}}}{48a}}
\mp {\frac {b_{{12}}[b_{{13}}]_{{{{1}}{{2}}}}}{48a}}
\pm {\frac {b_{{13}}[b_{{13}}]_{{{{1}}}}}{24ah}}
\pm {\frac {b_{{23}}[b_{{13}}]_{{{{2}}}}}{24ah}}
\mp {\frac {{b^{2}_{{13}}}[a]_{{{{1}}}}}{24{a}^{2}h}}
\pm {\frac {c_{{3}}b_{{13}}}{12ah}}
\mp {\frac {b_{{13}}[a]_{{{{3}}}}}{12ah}}
\\
&
+ {\frac {b_{{12}}[a]_{{{{2}}}}b_{{13}}}{24{a}^{2}h}}
\pm {\frac {c_{{1}}}{12h}}
+ {\frac {{b^{2}_{{13}}}}{12a{h}^{2}}}
\mp {\frac {b_{{13}}[b_{{13}}]_{{{{1}}{{2}}}}}{48a}}
\mp {\frac {b_{{23}}[c_{{1}}]_{{{{2}}}}}{48a}}
\pm {\frac {b_{{23}}b_{{12}}}{12a{h}^{2}}}
+ {\frac {[a]_{{{{3}}}}{b^{2}_{{13}}}}{24{a}^{2}h}}
\\
&
\pm {\frac {b_{{13}}[a]_{{{{1}}}}c_{{1}}}{48{a}^{2}}}
\pm {\frac {b_{{12}}[a]_{{{{1}}}}[b_{{13}}]_{{{{2}}}}}{48{a}^{2}}}
\pm {\frac {b_{{23}}[a]_{{{{2}}}}c_{{1}}}{48{a}^{2}}}
\pm {\frac {b_{{13}}[a]_{{{{1}}}}[b_{{13}}]_{{{{3}}}}}{48{a}^{2}}}
\pm {\frac {b_{{23}}[a]_{{{{3}}}}[b_{{13}}]_{{{{2}}}}}{48{a}^{2}}}
\\
&
\pm {\frac {b_{{13}}[a]_{{{{3}}}}[b_{{13}}]_{{{{1}}}}}{48{a}^{2}}}
\pm {\frac {b_{{23}}[a]_{{{{2}}}}[b_{{13}}]_{{{{3}}}}}{48{a}^{2}}}
\mp \frac{[c_{{1}}]_{{{{3}}}}}{24}
\pm {\frac {b_{{12}}[a]_{{{{2}}}}[b_{{13}}]_{{{{1}}}}}{48{a}^{2}}}
+ {\frac {b_{{13}}[a]_{{{{1}}}}}{12ah}}
- {\frac {c_{{1}}b_{{13}}}{12ah}}  
\\
&
\pm {\frac {b_{{12}}[a]_{{{{2}}}}c_{{3}}}{48{a}^{2}}}
- {\frac {b_{{13}}[b_{{13}}]_{{{{3}}}}}{24ah}}
\pm {\frac {b_{{13}}[a]_{{{{3}}}}c_{{3}}}{48{a}^{2}}}
- {\frac {b_{{12}}[b_{{13}}]_{{{{2}}}}}{24ah}},\\
\hat{K}_{i,j \pm 1,m-1}  = & 
\mp \frac{[c_{{3}}]_{{{{2}}}}}{24}
\mp {\frac {b_{{23}}}{6{h}^{2}}}
- {\frac {[b_{{23}}]_{{{{2}}}}}{12h}}
- {\frac {c_{{3}}}{12h}}
\mp {\frac {b_{{13}}[a]_{{{{1}}}}b_{{23}}}{24{a}^{2}h}}
\mp \frac{[b_{{23}}]_{{{11}}}}{48}
\mp \frac{[b_{{23}}]_{{{22}}}}{48}
\mp \frac{[b_{{23}}]_{{{33}}}}{48}
\\
&
\mp {\frac {b_{{12}}[c_{{3}}]_{{{{1}}}}}{48a}}
\pm {\frac {[a]_{{{{3}}}}[b_{{23}}]_{{{{3}}}}}{24a}}
\mp {\frac {b_{{23}}[c_{{2}}]_{{{{2}}}}}{48a}}
\mp {\frac {b_{{23}}[c_{{3}}]_{{{{3}}}}}{48a}}
\pm {\frac {[a]_{{{{2}}}}c_{{3}}}{24a}}
\pm {\frac {[a]_{{{{1}}}}[b_{{23}}]_{{{{1}}}}}{24a}}
\mp {\frac {c_{{2}}c_{{3}}}{24a}}
\\
&
\mp {\frac {c_{{3}}[b_{{23}}]_{{{{3}}}}}{48a}}
\mp {\frac {b_{{12}}[b_{{23}}]_{{{{1}}{{2}}}}}{48a}}
\mp {\frac {b_{{13}}[c_{{2}}]_{{{{1}}}}}{48a}}
\mp {\frac {b_{{13}}[b_{{23}}]_{{{{1}}{{2}}}}}{48a}}
\pm {\frac {[a]_{{{{2}}}}[b_{{23}}]_{{{{2}}}}}{24a}}
\pm {\frac {[a]_{{{{3}}}}c_{{2}}}{24a}}
\\
&
+ {\frac {b_{{12}}[a]_{{{{1}}}}b_{{23}}}{24{a}^{2}h}}
\pm {\frac {c_{{2}}}{12h}}
+ {\frac {a}{6{h}^{2}}}
\pm {\frac {b_{{23}}c_{{3}}}{12ah}}
\mp {\frac {b_{{23}}[a]_{{{{3}}}}}{12ah}}
\pm {\frac {b_{{13}}[b_{{23}}]_{{{{1}}}}}{24ah}}
\pm {\frac {b_{{23}}[b_{{23}}]_{{{{2}}}}}{24ah}}
+ {\frac {{b^{2}_{{23}}}}{12a{h}^{2}}}
\\
& 
\mp {\frac {[a]_{{{{2}}}}{b^{2}_{{23}}}}{24{a}^{2}h}}
\mp {\frac {b_{{23}}[b_{{23}}]_{{{{2}}{{3}}}}}{48a}}
\mp {\frac {c_{{2}}[b_{{23}}]_{{{{2}}}}}{48a}}
\mp {\frac {c_{{1}}[b_{{23}}]_{{{{1}}}}}{48a}}
\pm {\frac {b_{{12}}b_{{13}}}{12a{h}^{2}}}
- {\frac {c_{{2}}b_{{23}}}{12ah}}
\pm {\frac {b_{{23}}[a]_{{{{3}}}}c_{{3}}}{48{a}^{2}}}
\\
& 
\pm {\frac {b_{{23}}[a]_{{{{3}}}}[b_{{23}}]_{{{{2}}}}}{48{a}^{2}}}
+ {\frac {b_{{23}}[a]_{{{{2}}}}}{12ah}}
\pm {\frac {b_{{12}}[a]_{{{{1}}}}c_{{3}}}{48{a}^{2}}}
\pm {\frac {b_{{13}}[a]_{{{{1}}}}[b_{{23}}]_{{{{3}}}}}{48{a}^{2}}}
\pm {\frac {b_{{23}}[a]_{{{{2}}}}[b_{{23}}]_{{{{3}}}}}{48{a}^{2}}}
\\
&
\pm {\frac {b_{{23}}[a]_{{{{2}}}}c_{{2}}}{48{a}^{2}}}
\pm {\frac {b_{{12}}[a]_{{{{1}}}}[b_{{23}}]_{{{{2}}}}}{48{a}^{2}}}
\pm {\frac {b_{{13}}[a]_{{{{1}}}}c_{{2}}}{48{a}^{2}}}
\pm {\frac {b_{{13}}[a]_{{{{3}}}}[b_{{23}}]_{{{{1}}}}}{48{a}^{2}}}
\pm {\frac {b_{{12}}[a]_{{{{2}}}}[b_{{23}}]_{{{{1}}}}}{48{a}^{2}}}
\\
&
- {\frac {b_{{23}}[b_{{23}}]_{{{{3}}}}}{24ah}}
- {\frac {b_{{12}}[b_{{23}}]_{{{{1}}}}}{24ah}}
+ {\frac {{b^{2}_{{23}}}[a]_{{{{3}}}}}{24{a}^{2}h}}
\pm {\frac {[b_{{23}}]_{{{{3}}}}}{12h}}
\mp \frac{[c_{{2}}]_{{{{3}}}}}{24}  ,\\
%> latex(CoeffGeneralK18forLaTexA1EqualA2EqualA3FORLATEX);
\hat{K}_{i\pm 1,j+1,m-1}  = & 
\mp {\frac {[b_{{13}}]_{{{{2}}}}}{48h}}
\mp {\frac {b_{{13}}}{24{h}^{2}}}
\mp {\frac {[b_{{12}}]_{{{{3}}}}}{48h}}
\mp {\frac {[b_{{23}}]_{{{{1}}}}}{48h}}
\mp {\frac {b_{{12}}[b_{{13}}]_{{{{1}}}}}{96ah}}
\mp {\frac {b_{{13}}[b_{{12}}]_{{{{1}}}}}{96ah}}
\mp {\frac {b_{{23}}[b_{{13}}]_{{{{3}}}}}{96ah}}
\\
&
\mp {\frac {b_{{23}}b_{{12}}}{24a{h}^{2}}}
\pm {\frac {b_{{13}}[a]_{{{{3}}}}b_{{23}}}{48{a}^{2}h}}
\pm {\frac {b_{{12}}[a]_{{{{2}}}}b_{{23}}}{48{a}^{2}h}}
- {\frac {b_{{23}}}{24{h}^{2}}}
\mp {\frac {b_{{13}}[b_{{23}}]_{{{{3}}}}}{96ah}}
\mp {\frac {b_{{23}}c_{{1}}}{48ah}}
- {\frac {b_{{12}}b_{{13}}}{24a{h}^{2}}}
\\
&
\pm {\frac {b_{{12}}}{24{h}^{2}}}
\pm {\frac {[a]_{{{{2}}}}b_{{13}}}{48ah}}
\pm {\frac {[a]_{{{{3}}}}b_{{12}}}{48ah}}
\pm {\frac {b_{{13}}[a]_{{{{1}}}}b_{{12}}}{48{a}^{2}h}}
\pm {\frac {[a]_{{{{1}}}}b_{{23}}}{48ah}}
\mp {\frac {b_{{12}}[b_{{23}}]_{{{{2}}}}}{96ah}}
\\
&
\mp {\frac {b_{{23}}[b_{{12}}]_{{{{2}}}}}{96ah}}
\mp {\frac {c_{{3}}b_{{12}}}{48ah}}
\mp {\frac {b_{{13}}c_{{2}}}{48ah}}
\pm {\frac {b_{{13}}b_{{23}}}{24a{h}^{2}}}  ,\\
%> latex(CoeffGeneralK21forLaTexA1EqualA2EqualA3FORLATEX);
\hat{K}_{i\pm 1,j-1,m+1}  = & 
\mp {\frac {[b_{{13}}]_{{{{2}}}}}{48h}}
\pm {\frac {b_{{13}}}{24{h}^{2}}}
\mp {\frac {[b_{{12}}]_{{{{3}}}}}{48h}}
\mp {\frac {[b_{{23}}]_{{{{1}}}}}{48h}}
\mp {\frac {b_{{12}}[b_{{13}}]_{{{{1}}}}}{96ah}}
\mp {\frac {b_{{13}}[b_{{12}}]_{{{{1}}}}}{96ah}}
\mp {\frac {b_{{23}}[b_{{13}}]_{{{{3}}}}}{96ah}}
\\
&
\pm {\frac {b_{{23}}b_{{12}}}{24a{h}^{2}}}
\pm {\frac {b_{{13}}[a]_{{{{3}}}}b_{{23}}}{48{a}^{2}h}}
\pm {\frac {b_{{12}}[a]_{{{{2}}}}b_{{23}}}{48{a}^{2}h}}
- {\frac {b_{{23}}}{24{h}^{2}}}
\mp {\frac {b_{{13}}[b_{{23}}]_{{{{3}}}}}{96ah}}
\mp {\frac {b_{{23}}c_{{1}}}{48ah}}
- {\frac {b_{{12}}b_{{13}}}{24a{h}^{2}}}
\\
&
\mp {\frac {b_{{12}}}{24{h}^{2}}}
\pm {\frac {[a]_{{{{2}}}}b_{{13}}}{48ah}}
\pm {\frac {[a]_{{{{3}}}}b_{{12}}}{48ah}}
\pm {\frac {b_{{13}}[a]_{{{{1}}}}b_{{12}}}{48{a}^{2}h}}
\pm {\frac {[a]_{{{{1}}}}b_{{23}}}{48ah}}
\mp {\frac {b_{{12}}[b_{{23}}]_{{{{2}}}}}{96ah}}
\\
&
\mp {\frac {b_{{23}}[b_{{12}}]_{{{{2}}}}}{96ah}}
\mp {\frac {c_{{3}}b_{{12}}}{48ah}}
\mp {\frac {b_{{13}}c_{{2}}}{48ah}}
\mp {\frac {b_{{13}}b_{{23}}}{24a{h}^{2}}}  ,\\
%> latex(CoeffGeneralK24forLaTexA1EqualA2EqualA3FORLATEX);
\hat{K}_{i \pm 1,j,m+1}  = & 
\pm   \frac{[c_{{3}}]_{{{{1}}}}}{24}
\pm {\frac {b_{{13}}}{6{h}^{2}}}
+ {\frac {c_{{3}}}{12h}}
+ {\frac {[b_{{13}}]_{{{{1}}}}}{12h}}
-\mp{\frac {b_{{23}}[a]_{{{{2}}}}b_{{13}}}{24{a}^{2}h}}
\pm \frac{[b_{{13}}]_{{{11}}}}{48}
\pm \frac{[b_{{13}}]_{{{22}}}}{48}
\pm \frac{[b_{{13}}]_{{{33}}}}{48}
\\
&
\pm {\frac {[b_{{13}}]_{{{{3}}}}}{12h}}
\mp {\frac {[a]_{{{{3}}}}[b_{{13}}]_{{{{3}}}}}{24a}}
\pm {\frac {b_{{12}}[c_{{3}}]_{{{{2}}}}}{48a}}
\pm {\frac {b_{{13}}[c_{{3}}]_{{{{3}}}}}{48a}}
\mp {\frac {[a]_{{{{1}}}}[b_{{13}}]_{{{{1}}}}}{24a}}
\pm {\frac {c_{{2}}[b_{{13}}]_{{{{2}}}}}{48a}}
+ {\frac {a}{6{h}^{2}}}
\\
&
\mp {\frac {[a]_{{{{3}}}}c_{{1}}}{24a}}
\pm {\frac {b_{{13}}[c_{{1}}]_{{{{1}}}}}{48a}}
\pm {\frac {c_{{1}}[b_{{13}}]_{{{{1}}}}}{48a}}
\pm {\frac {b_{{23}}[b_{{13}}]_{{{{2}}{{3}}}}}{48a}}
\mp {\frac {[a]_{{{{1}}}}c_{{3}}}{24a}}
\pm {\frac {c_{{1}}c_{{3}}}{24a}}
\mp {\frac {[a]_{{{{2}}}}[b_{{13}}]_{{{{2}}}}}{24a}}
\\
&
\pm {\frac {c_{{3}}[b_{{13}}]_{{{{3}}}}}{48a}}
\pm {\frac {b_{{12}}[b_{{13}}]_{{{{1}}{{2}}}}}{48a}}
\pm {\frac {b_{{13}}[b_{{13}}]_{{{{1}}}}}{24ah}}
\pm {\frac {b_{{23}}[b_{{13}}]_{{{{2}}}}}{24ah}}
\mp {\frac {{b^{2}_{{13}}}[a]_{{{{1}}}}}{24{a}^{2}h}}
\pm {\frac {c_{{3}}b_{{13}}}{12ah}}
\mp {\frac {b_{{13}}[a]_{{{{3}}}}}{12ah}}
\\
&
- {\frac {b_{{12}}[a]_{{{{2}}}}b_{{13}}}{24{a}^{2}h}}
\pm {\frac {c_{{1}}}{12h}}
+ {\frac {{b^{2}_{{13}}}}{12a{h}^{2}}}
\pm {\frac {b_{{13}}[b_{{13}}]_{{{{1}}{{2}}}}}{48a}}
\pm {\frac {b_{{23}}[c_{{1}}]_{{{{2}}}}}{48a}}
\mp {\frac {b_{{23}}b_{{12}}}{12a{h}^{2}}}
- {\frac {[a]_{{{{3}}}}{b^{2}_{{13}}}}{24{a}^{2}h}}
\\
&
\mp {\frac {b_{{13}}[a]_{{{{1}}}}c_{{1}}}{48{a}^{2}}}
\mp {\frac {b_{{12}}[a]_{{{{1}}}}[b_{{13}}]_{{{{2}}}}}{48{a}^{2}}}
\mp {\frac {b_{{23}}[a]_{{{{2}}}}c_{{1}}}{48{a}^{2}}}
\mp {\frac {b_{{13}}[a]_{{{{1}}}}[b_{{13}}]_{{{{3}}}}}{48{a}^{2}}}
\mp {\frac {b_{{23}}[a]_{{{{3}}}}[b_{{13}}]_{{{{2}}}}}{48{a}^{2}}}
\\
&
\pm \frac{[c_{{1}}]_{{{{3}}}}}{24}
\mp {\frac {b_{{13}}[a]_{{{{3}}}}[b_{{13}}]_{{{{1}}}}}{48{a}^{2}}}
\mp {\frac {b_{{23}}[a]_{{{{2}}}}[b_{{13}}]_{{{{3}}}}}{48{a}^{2}}}
\mp {\frac {b_{{12}}[a]_{{{{2}}}}[b_{{13}}]_{{{{1}}}}}{48{a}^{2}}}
- {\frac {b_{{13}}[a]_{{{{1}}}}}{12ah}}
\\
&
\mp {\frac {b_{{12}}[a]_{{{{2}}}}c_{{3}}}{48{a}^{2}}}
+ {\frac {b_{{13}}[b_{{13}}]_{{{{3}}}}}{24ah}}
\mp {\frac {b_{{13}}[a]_{{{{3}}}}c_{{3}}}{48{a}^{2}}}
+ {\frac {b_{{12}}[b_{{13}}]_{{{{2}}}}}{24ah}}
+ {\frac {c_{{1}}b_{{13}}}{12ah}}  ,\\
%> latex(CoeffGeneralK23forLaTexA1EqualA2EqualA3FORLATEX);
\hat{K}_{i,j,m\pm 1}  = & 
- {\frac {b_{{23}}[a]_{{{{2}}}}c_{{3}}}{12{a}^{2}}}
- {\frac {b_{{13}}[a]_{{{{1}}}}c_{{3}}}{12{a}^{2}}}
+ \frac{[c_{{3}}]_{{{{3}}}}}{6}
\mp {\frac {[b_{{23}}]_{{{{2}}}}}{6h}}
\pm {\frac {c_{{3}}}{6h}}
\mp {\frac {[b_{{13}}]_{{{{1}}}}}{6h}}
\pm \frac{h[c_{{3}}]_{{{11}}}}{24}
\pm \frac{h[c_{{3}}]_{{{33}}}}{24}
\\
&
\pm \frac{h[c_{{3}}]_{{{22}}}}{24}
+ {\frac {{c^{2}_{{3}}}}{12a}}
+ \frac{[a]_{{{11}}}}{12}
+ \frac{[a]_{{{22}}}}{12}
+ \frac{[a]_{{{33}}}}{12}
\pm {\frac {b_{{12}}[a]_{{{{1}}}}b_{{23}}}{12{a}^{2}h}}
\pm {\frac {b_{{12}}[a]_{{{{2}}}}b_{{13}}}{12{a}^{2}h}}
\\
&
\mp {\frac {hb_{{13}}[a]_{{{{1}}}}[c_{{3}}]_{{{{3}}}}}{24{a}^{2}}}
\mp {\frac {hb_{{12}}[a]_{{{{1}}}}[c_{{3}}]_{{{{2}}}}}{24{a}^{2}}}
\mp {\frac {hb_{{23}}[a]_{{{{2}}}}[c_{{3}}]_{{{{3}}}}}{24{a}^{2}}}
\mp {\frac {hb_{{23}}[a]_{{{{3}}}}[c_{{3}}]_{{{{2}}}}}{24{a}^{2}}}
- {\frac {b_{{13}}[a]_{{{{3}}}}[a]_{{{{1}}}}}{6{a}^{2}}}
\\
&
\mp {\frac {hb_{{13}}[a]_{{{{3}}}}[c_{{3}}]_{{{{1}}}}}{24{a}^{2}}}
\mp {\frac {hb_{{12}}[a]_{{{{2}}}}[c_{{3}}]_{{{{1}}}}}{24{a}^{2}}}
+ {\frac {a}{3{h}^{2}}}
- {\frac {b_{{23}}[a]_{{{{3}}}}[a]_{{{{2}}}}}{6{a}^{2}}}
- {\frac {b_{{12}}[a]_{{{{1}}}}[a]_{{{{2}}}}}{6{a}^{2}}}
+ {\frac {b_{{13}}[c_{{3}}]_{{{{1}}}}}{12a}}
\\
&
+ {\frac {c_{{1}}[a]_{{{{1}}}}}{12a}}
- {\frac {{b^{2}_{{23}}}}{6a{h}^{2}}}
+ {\frac {b_{{12}}[a]_{{{{1}}{{2}}}}}{12a}}
+ {\frac {c_{{2}}[a]_{{{{2}}}}}{12a}}
- {\frac {{b^{2}_{{13}}}}{6a{h}^{2}}}
- {\frac {c_{{3}}[a]_{{{{3}}}}}{12a}}
+ {\frac {b_{{13}}[a]_{{{{1}}{{2}}}}}{12a}}
\pm {\frac {hc_{{2}}[c_{{3}}]_{{{{2}}}}}{24a}}
\\
&
+ {\frac {b_{{23}}[a]_{{{{2}}{{3}}}}}{12a}}
+ {\frac {b_{{23}}[c_{{3}}]_{{{{2}}}}}{12a}}
\mp {\frac {c_{{2}}b_{{23}}}{6ah}}
\pm {\frac {b_{{23}}[a]_{{{{2}}}}}{6ah}}
\mp {\frac {b_{{23}}[b_{{23}}]_{{{{3}}}}}{12ah}}
\mp {\frac {b_{{12}}[b_{{23}}]_{{{{1}}}}}{12ah}}
\pm {\frac {{b^{2}_{{23}}}[a]_{{{{3}}}}}{12{a}^{2}h}}
\\
&
\pm {\frac {[a]_{{{{3}}}}{b^{2}_{{13}}}}{12{a}^{2}h}}
\pm {\frac {b_{{13}}[a]_{{{{1}}}}}{6ah}}
\mp {\frac {b_{{13}}[b_{{13}}]_{{{{3}}}}}{12ah}}
\mp {\frac {b_{{12}}[b_{{13}}]_{{{{2}}}}}{12ah}}
\mp {\frac {c_{{1}}b_{{13}}}{6ah}}
\pm {\frac {hb_{{13}}[c_{{3}}]_{{{{1}}{{2}}}}}{24a}}
\pm {\frac {hc_{{1}}[c_{{3}}]_{{{{1}}}}}{24a}}
\\
&
\pm {\frac {hb_{{12}}[c_{{3}}]_{{{{1}}{{2}}}}}{24a}}
\pm {\frac {hb_{{23}}[c_{{3}}]_{{{{2}}{{3}}}}}{24a}}
\mp {\frac {h[a]_{{{{1}}}}[c_{{3}}]_{{{{1}}}}}{12a}}
\mp {\frac {h[a]_{{{{2}}}}[c_{{3}}]_{{{{2}}}}}{12a}}
\mp {\frac {h[a]_{{{{3}}}}[c_{{3}}]_{{{{3}}}}}{12a}}
\\
&
\pm {\frac {hc_{{3}}[c_{{3}}]_{{{{3}}}}}{24a}}
- {\frac {{[a]^{2}_{{{{1}}}}}}{6a}}
- {\frac {{[a]^{2}_{{{{3}}}}}}{6a}}
- {\frac {{[a]^{2}_{{{{2}}}}}}{6a}}  ,\\
%> latex(CoeffGeneralK26forLaTexA1EqualA2EqualA3FORLATEX);
\hat{K}_{i,j\pm 1,m+1}  = & 
\pm   \frac{[c_{{3}}]_{{{{2}}}}}{24}
\pm {\frac {b_{{23}}}{6{h}^{2}}}
+ {\frac {[b_{{23}}]_{{{{2}}}}}{12h}}
+ {\frac {c_{{3}}}{12h}}
\mp {\frac {b_{{13}}[a]_{{{{1}}}}b_{{23}}}{24{a}^{2}h}}
\pm \frac{[b_{{23}}]_{{{11}}}}{48}
\pm \frac{[b_{{23}}]_{{{22}}}}{48}
\pm \frac{[b_{{23}}]_{{{33}}}}{48}
\\
&
\pm {\frac {b_{{12}}[c_{{3}}]_{{{{1}}}}}{48a}}
\mp {\frac {[a]_{{{{3}}}}[b_{{23}}]_{{{{3}}}}}{24a}}
\pm {\frac {b_{{23}}[c_{{2}}]_{{{{2}}}}}{48a}}
\pm {\frac {b_{{23}}[c_{{3}}]_{{{{3}}}}}{48a}}
\mp {\frac {[a]_{{{{2}}}}c_{{3}}}{24a}}
\mp {\frac {[a]_{{{{1}}}}[b_{{23}}]_{{{{1}}}}}{24a}}
\pm {\frac {c_{{2}}c_{{3}}}{24a}}
\\
&
\pm {\frac {c_{{3}}[b_{{23}}]_{{{{3}}}}}{48a}}
\pm {\frac {b_{{12}}[b_{{23}}]_{{{{1}}{{2}}}}}{48a}}
\pm {\frac {b_{{13}}[c_{{2}}]_{{{{1}}}}}{48a}}
\pm {\frac {b_{{13}}[b_{{23}}]_{{{{1}}{{2}}}}}{48a}}
\mp {\frac {[a]_{{{{2}}}}[b_{{23}}]_{{{{2}}}}}{24a}}
\mp {\frac {[a]_{{{{3}}}}c_{{2}}}{24a}}
\\
&
- {\frac {b_{{12}}[a]_{{{{1}}}}b_{{23}}}{24{a}^{2}h}}
\pm {\frac {c_{{2}}}{12h}}
+ {\frac {a}{6{h}^{2}}}
\pm {\frac {b_{{23}}c_{{3}}}{12ah}}
\mp {\frac {b_{{23}}[a]_{{{{3}}}}}{12ah}}
\pm {\frac {b_{{13}}[b_{{23}}]_{{{{1}}}}}{24ah}}
\pm {\frac {b_{{23}}[b_{{23}}]_{{{{2}}}}}{24ah}}
+ {\frac {c_{{2}}b_{{23}}}{12ah}}
\\
&
\mp {\frac {[a]_{{{{2}}}}{b^{2}_{{23}}}}{24{a}^{2}h}}
+ {\frac {{b^{2}_{{23}}}}{12a{h}^{2}}}
\pm {\frac {b_{{23}}[b_{{23}}]_{{{{2}}{{3}}}}}{48a}}
\pm {\frac {c_{{2}}[b_{{23}}]_{{{{2}}}}}{48a}}
\pm {\frac {c_{{1}}[b_{{23}}]_{{{{1}}}}}{48a}}
\mp {\frac {b_{{12}}b_{{13}}}{12a{h}^{2}}}
- {\frac {b_{{23}}[a]_{{{{2}}}}}{12ah}}
\\
&
\mp {\frac {b_{{23}}[a]_{{{{3}}}}[b_{{23}}]_{{{{2}}}}}{48{a}^{2}}}
\mp {\frac {b_{{23}}[a]_{{{{3}}}}c_{{3}}}{48{a}^{2}}}
\mp {\frac {b_{{12}}[a]_{{{{1}}}}c_{{3}}}{48{a}^{2}}}
\mp {\frac {b_{{13}}[a]_{{{{1}}}}[b_{{23}}]_{{{{3}}}}}{48{a}^{2}}}
\mp {\frac {b_{{23}}[a]_{{{{2}}}}[b_{{23}}]_{{{{3}}}}}{48{a}^{2}}}
\\
&
\mp {\frac {b_{{23}}[a]_{{{{2}}}}c_{{2}}}{48{a}^{2}}}
\mp {\frac {b_{{12}}[a]_{{{{1}}}}[b_{{23}}]_{{{{2}}}}}{48{a}^{2}}}
\mp {\frac {b_{{13}}[a]_{{{{1}}}}c_{{2}}}{48{a}^{2}}}
\mp {\frac {b_{{13}}[a]_{{{{3}}}}[b_{{23}}]_{{{{1}}}}}{48{a}^{2}}}
\mp {\frac {b_{{12}}[a]_{{{{2}}}}[b_{{23}}]_{{{{1}}}}}{48{a}^{2}}}
\\
&
+ {\frac {b_{{23}}[b_{{23}}]_{{{{3}}}}}{24ah}}
+ {\frac {b_{{12}}[b_{{23}}]_{{{{1}}}}}{24ah}}
- {\frac {{b^{2}_{{23}}}[a]_{{{{3}}}}}{24{a}^{2}h}}
\pm {\frac {[b_{{23}}]_{{{{3}}}}}{12h}}
\pm \frac{[c_{{2}}]_{{{{3}}}}}{24}.
\end{align*}
\end{small}
Note that in the above $a, b_{12}, b_{13}, b_{23}, c_1, c_2, c_3$ and $g$ are evaluated at
$\bigl(x_{i_1}^{(1)},x_{i_2}^{(2)},x_{i_3}^{(3)} \bigr)\in
\interior{G}_h^{(3)}$ and $\tau\in \Omega_{\tau}$.
To streamline the notation we used $[\cdot ]_{k}$ and $[
\cdot ]_{kp}$ to denote the first and second derivative of the coefficients with
respect to $x_k$, and with respect to $x_k$ and $x_p$, respectively.

\end{appendices}
\end{document}